\newtheorem{theorem}{Theorem}[section]
\newtheorem{lemma}[theorem]{Lemma}
\newtheorem{proposition}[theorem]{Proposition}
\newtheorem{corollary}[theorem]{Corollary}
\theoremstyle{definition}
\newtheorem{definition}[theorem]{Definition}
\newtheorem{example}[theorem]{Example}
\newtheorem{question}[theorem]{Question}
\newtheorem{remark}[theorem]{Remark}
\newcommand{\Hom}{\text{Hom}}
\newcommand{\kk}{{\bold k}}
\newcommand{\C}{\mathcal{C}}
\newcommand{\ben}{\begin{enumerate}}
\newcommand{\een}{\end{enumerate}}
\theoremstyle{plain}
\newtheorem*{sol}{Solution}
\theoremstyle{definition}
\theoremstyle{remark}
\newcommand{\solu}[1]{\begin{sol}{\bf (\ref{#1})}}
\begin{document}

\title{Koszul duality and the PBW theorem in symmetric tensor categories in positive characteristic}

\author{Pavel Etingof}
\address{Department of Mathematics, Massachusetts Institute of Technology,
Cambridge, MA 02139, USA}
\email{etingof@math.mit.edu}

\dedicatory{To David Kazhdan on his 70th birthday with admiration}

\begin{abstract} We generalize the theory of Koszul complexes and Koszul algebras to symmetric tensor categories. 
In characteristic zero the generalization is routine, while in characteristic $p$ 
there is a subtlety -- the symmetric algebra of an object is not always Koszul
(i.e., its Koszul complex is not always exact). Namely, this happens in the Verlinde 
category ${\rm Ver}_p$ in any characteristic $p\ge 5$. We call an object Koszul 
if its symmetric algebra is Koszul, and show that the only Koszul objects 
of ${\rm Ver}_p$ are usual supervector spaces, i.e., a non-invertible 
simple object $L_m$ ($2\le m\le p-2$) is not Koszul. We show, however, that 
the symmetric algebra 
$SL_m$ is almost Koszul in the sense of Brenner, Butler
and King (namely, $(p-m,m)$-Koszul), and compute the corresponding internal 
Yoneda algebra (i.e., the internal Ext-algebra from the trivial module to itself). 

We then proceed to discuss the PBW theorem for operadic Lie algebras (i.e., algebras over the operad ${\bf Lie}$). 
This theorem is well known to fail for vector spaces in characteristic $2$ 
(as one needs to require that $[x,x]=0$), and for supervector spaces
in characteristic $3$ (as one needs to require that $[[x,x],x]=0$ 
for odd $x$), but it holds in these categories in any characteristic 
$p\ge 5$; there is a well known proof based on Koszul duality. 
However, we show that in the category ${\rm Ver}_p$, because of failure of Koszul duality, 
the PBW theorem can fail in any characteristic $p\ge 5$. Namely, one 
needs to impose the $p$-Jacobi identity, a certain generalization to 
characteristic $p$ of the identities $[x,x]=0$ and $[[x,x],x]=0$. 
On the other hand, our main result is that once the $p$-Jacobi identity is imposed, 
the PBW theorem holds. This shows that the correct definition of a 
Lie algebra in ${\rm Ver}_p$ is an algebra over ${\bf Lie}$ which satisfies the $p$-Jacobi identity. 
This also applies to any symmetric tensor category that admits a symmetric tensor functor to ${\rm Ver}_p$ 
(e.g., a symmetric fusion category, see \cite{O}, Theorem 1.5). Finally, we prove the PBW theorem for Lie algebras 
in any quasi-semisimple symmetric tensor category.       
\end{abstract}

\maketitle

\section{Introduction}

The goal of this paper is to extend the theory of Koszul duality 
and Koszul algebras to symmetric tensor categories, and to use it to
prove the PBW theorem for Lie algebras in such categories. 

We start by defining the Koszul complex $K^\bullet(V)$ of an object 
$V$ of a symmetric tensor category $\C$ over a field $\kk$
(following \cite{EHO}, Subsection 3.4). In the classical setting (of vector and supervector spaces), 
this complex is exact, which is a basic fact in commutative algebra. 
In particular, it is exact in the category of Schur functors, and hence 
in any symmetric tensor category if ${\rm char}(\kk)=0$. 

In characteristic $p$, however, the story is more complicated, and the Koszul complex may fail to be exact. 
Thus we call an object $V$ Koszul if its Koszul complex is exact. We then study the Koszul complex of an object 
$V$ of the Verlinde category ${\rm Ver}_p$ (\cite{O}, Definition 3.1) and show that the only $V$ for which $K^\bullet(V)$ is exact 
are classical supervector spaces. In other words, if $V=L_m$, where $L_m$ is a non-invertible simple object of ${\rm Ver}_p$
(i.e., $2\le m\le p-2$), then the symmetric algebra\footnote{Throughout the paper, this algebra should not be confused with the special linear group, which we will denote by ${\bold{SL}}(m)$.} $SL_m$ is not Koszul; this happens for each $p\ge 5$. 

We also generalize to the categorical setting the theory of Koszul algebras, 
and in particular Drinfeld's ``Koszul deformation principle'' stating that a 
homogeneous deformation of a Koszul algebra is flat if it is flat in degrees 
$2$ and $3$ (\cite{PP}). Also, we show that if $V$ is a Koszul object then the algebras $SV$ and $\wedge V^*$ are Koszul. 
Finally, we show that if $2\le m\le p-2$ then the algebras $SL_m$ and $\wedge L_m$ are 
almost Koszul in the sense of \cite{BBK}, and for any $V\in {\rm Ver}_p$, compute the internal Ext-algebra 
of the augmentation module over $SV$ and $\wedge V^*$ to itself, using the periodic 
Koszul complex of $L_m$ (which, unlike the usual Koszul complex, is exact). 
  
We then proceed to apply these results to the Poincare-Birkhoff-Witt theorem for Lie algebras in symmetric tensor categories. 
Again, in characteristic zero the generalization of the PBW theorem is straightforward, and several proofs 
extend easily to the categorical setting (e.g. the one based on the Campbell-Baker-Hausdorff formula, see \cite{EGNO}, Exercise 9.9.7(viii), 
or the one based on the Koszul deformation principle, given below). On the other hand, in characteristic $p$ 
it is not even clear what the right definition of a Lie algebra should be. Namely, the most obvious definition  
is that of an operadic Lie algebra, i.e. an algebra over the operad ${\bf Lie}$. 
But it is known that already for usual Lie algebras, this definition is not
correct in characteristic $2$: to ensure the PBW theorem, one needs to additionally impose 
the relation $[x,x]=0$, which is not polylinear and hence not of operadic nature. 
Furthermore, for Lie superalgebras, a similar problem occurs in characteristic $3$, 
and to have the PBW theorem, one needs to impose the relation $[[x,x],x]=0$ for odd $x$. 
With these additional axioms, however, in the category of supervector spaces 
the PBW theorem always holds, as can be shown using the Koszul deformation principle. 
A similar statement (with a suitable categorical generalization of the conditions $[x,x]=0$ and $[[x,x],x]=0$ and the same proof) 
holds in any symmetric tensor category if the underlying object of the operadic Lie algebra is Koszul.     

On the other hand, in absence of Koszulity, the situation is more complicated, and 
the PBW theorem for operadic Lie algebras can fail in any characteristic $p\ge 5$. 
So it remains unclear how to define a Lie algebra. To deal with this issue, for any
object $V$ we consider the free operadic Lie algebra ${\rm FOLie}(V)$ generated by $V$, which maps naturally to the tensor algebra $TV$. Unlike the classical situation, this map, in general, is not injective, and 
has a kernel $E(V)=\oplus_{n\ge 1}E_n(V)$ (where $E_n(V)$ is the degree $n$ component). 
The object $E(V)$ is crucial for understanding the failure of the PBW theorem, and we define 
a Lie algebra as an operadic Lie algebra $L$ for which the natural map 
$\beta^L: {\rm FOLie}(L)\to L$ annihilates $E(L)$. This always holds 
if $L$ satisfies the PBW theorem, and we show that the converse is true in the category ${\rm Ver}_p$ 
(and hence in every category that admits a fiber functor to ${\rm Ver}_p$, e.g. any symmetric fusion category, see \cite{O}, Theorem 1.5). 

More precisely, we show that in ${\rm Ver}_p$, we have 
$E_n(V)=0$ for $n<p$ and $E_p(V)={\rm Hom}(L_2,V)^{(1)}\otimes {\bold 1}_-$, 
where for $p=2$ we agree that $L_2={\bold 1}_-={\bold 1}$. Moreover, we show that 
the axiom of a Lie algebra follows from its specialization to degree $p$, which we call 
the $p$-Jacobi identity: $\beta^L|_{E_p(L)}=0$. Since $E_p(L)$ is known explicitly, this 
is an explicit identity of degree $p$ which needs to be added to 
the skew-symmetry and usual Jacobi identity to define a Lie algebra. 
And once this identity is imposed, our main result 
guarantees that the PBW theorem holds. 

We also show that ${\rm FOLie}(L_2)_{\le p}$ (the quotient of ${\rm FOLie}(L_2)$ by subobjects of degree 
$\ge p+1$) is a finite dimensional operadic Lie algebra in ${\rm Ver}_p$ which is not a 
Lie algebra (i.e., it fails the PBW theorem).  

In a general symmetric tensor category in characteristic $p$, we don't know if 
the PBW theorem holds for any Lie algebra, but we prove it 
under a mild restriction (that the category is quasi-semisimple, i.e., any injection $X\to Y$ defines an injection $SX\to SY$), see Section 7. 
We also don't know if the $p$-Jacobi identity for an operadic Lie algebra $L$ implies that 
it satisfies the PBW theorem, or at least is a Lie algebra.   

The organization of the paper is as follows. In Section 2, we study the 
Koszul and De Rham complexes of an object, and introduce the notion of a Koszul object. In Section 3, we extend the theory of Koszul algebras and the Koszul deformation principle to tensor categories. In Section 4 we define the notion of a Lie algebra in a symmetric tensor category, and study its properties. We also prove
the PBW theorem in the case when the underlying object of an operadic Lie algebra is Koszul. In Section 5, we 
compute the cohomology of the Koszul and De Rham complexes of an object of ${\rm Ver}_p$, 
as well as the Ext-algebra of the augmentation module over the 
symmetric and exterior algebra of any object of ${\rm Ver}_p$,
showing that the algebras $SL_m$, $\wedge L_m$ for $2\le m\le p-2$ are almost Koszul. We also sketch a construction of Frobenius $(r,s)$-Koszul algebras for any $r,s\ge 2$ which arise from this approach. In Section 6 we prove our main result -- the PBW theorem 
in ${\rm Ver}_p$ in presence of the $p$-Jacobi identity, and discuss operadic Lie algebras in ${\rm Ver}_p$ 
that fail the PBW theorem. Finally, in Section 7 we prove the PBW theorem for Lie algebras in a quasi-semisimple
symmetric tensor category.  

{\bf Acknowledgements.} The author is grateful to D. Kazhdan, V. Ostrik, 
A. Polishchuk, and L. Positselski for very useful discussions, and to V. Ostrik for reading the draft and many interesting suggestions. In particular, the author thanks A. Polishchuk for 
help with the proof of Theorem \ref{maint}. 
The work of the author was partially supported by the NSF grant DMS-1502244. 

\section{The Koszul complex of an object} 

\subsection{The Koszul complex} Throughout the paper, $\kk$ will denote an algebraically closed field, and $\C$ a symmetric 
tensor category over $\kk$, i.e. an abelian $\kk$-linear category with a compatible rigid symmetric monoidal structure 
(see \cite{EGNO}, Definitions 4.1.1, 8.1.2). Let $V\in \C$. Recall from \cite{EHO}, Subsection 2.1 
that to $V$ we may attach objects $S^iV$, $\Bbb S^iV$, $\wedge^iV$, $\Lambda^iV$. 
Using these objects, we can define the symmetric algebra $SV:=\oplus_{i\ge 0}S^iV$, the dual symmetric algebra 
$\Bbb SV:=\oplus_{i\ge 0}\Bbb S^iV$, the exterior algebra 
$\wedge V:=\oplus_{i\ge 0}\wedge^i V$, and the dual exterior algebra 
$\Lambda V:=\oplus_{i\ge 0}\Lambda^i V$ (ind-objects\footnote{In this paper, we will often deal with ind-objects and sometimes pro-objects. To simplify the language, we will often abuse terminology by referring to them as ``objects".} in $\C$). These are $\Bbb Z_+$-graded 
algebras, and $SV$, $\wedge V$ are generated in degree $1$ (by $V$), while 
$\Bbb SV$, $\Lambda V$, in general, are not (if ${\rm char}(\kk)>0$). 
 
Note that $SV,\wedge V$ are graded Hopf algebras, with coproduct defined by $\Delta|_V=\Delta_1+\Delta_2$
where $\Delta_1: V\to V\otimes \bold 1$ and $\Delta_2: V\to \bold 1\otimes V$ are the unit morphisms.  
Thus, the restricted duals $(SV)^*_{\rm res}=\Bbb SV^*$ and $(\wedge V)^*_{\rm res}=\Lambda V^*$ are also graded Hopf algebras. 
This gives another description of the multiplication in $\Bbb SV$, $\Lambda V$. 

Recall that we have the multiplication morphism $\mu: V\otimes S^jV\to S^{j+1}V$.  
We also have the multiplication morphism $V^*\otimes \wedge^{i-1} V^*\to \wedge^i V^*$, 
which after dualization defines a morphism $\mu_*: V^*\otimes \Lambda^i V\to \Lambda^{i-1}V$.

\begin{definition} (\cite{EHO}, Subsection 3.4) The {\it Koszul complex} of $V$ is the complex $K^\bullet(V)=SV\otimes \Lambda^\bullet V$ 
whose differential $\partial : K^\bullet(V)\to K^{\bullet-1}(V)$ is
the composition 
$$
SV\otimes \Lambda V\to V\otimes V^*\otimes SV\otimes \Lambda V\to V\otimes SV\otimes V^*\otimes \Lambda V\to SV\otimes \Lambda V
$$
given by the formula 
$$
\partial=(\mu\otimes \mu_*)\circ P_{23}\circ ({\rm coev}_V\otimes 1_{SV}\otimes 1_{\Lambda V}),
$$
where ${\rm coev}_V: \bold 1\to V\otimes V^*$ is the coevaluation, and $P_{23}$ is the permutation of the second and the third factor. 
\end{definition}  

Thus, $\partial: S^jV\otimes \Lambda^i V\to S^{j+1}V\otimes \Lambda^{i-1}V$. In other words, 
$\partial $ preserves the diagonal grading on $K^\bullet(V)$ (i.e. the grading induced by putting both copies of $V$ in degree $1$), but decreases 
the homological grading $\bullet$ by $1$. Also, it is easy to see that 
$\partial$ is a derivation of the algebra $SV\otimes \Lambda V$, so 
the Koszul complex is a DG algebra. 

\begin{definition}\label{exaa} We will say that the Koszul complex is {\it exact} if it 
is exact in positive homological degrees and has homology equal to $\bold 1$ in degree $0$; i.e., if it is a resolution of the augmentation $SV$-module $\bold 1$ by free $SV$-modules.
\end{definition} 

As explained in \cite{EHO}, Subsection 3.4, in the category of vector spaces, the Koszul complex is exact, and coincides with the usual Koszul complex 
from commutative algebra, --- the differential graded algebra $SV\otimes \wedge^\bullet V$ with differential given by $\partial(x,y)=(0,x)$, $x,y\in V$. 
Moreover, it is exact in the category of supervector spaces. This implies that it is exact in classical representation categories 
(of supergroups) and their interpolations (Deligne categories). Therefore, if ${\rm char}(\kk)=0$ 
then the Koszul complex is exact in the category of Schur functors, and hence in any symmetric tensor category over $\kk$. 
For the same reason, it is exact in degrees $<p$ (under diagonal grading) 
in any symmetric tensor category over a field of characteristic $p$. 
However, it follows from Proposition 3.10 of \cite{EHO} that in general the Koszul complex 
can fail to be exact\footnote{We don't know if this can happen in characteristics $2,3$.}; 
in particular, this happens in the Verlinde categories ${\rm Ver}_p$ for any $p\ge 5$. 

\subsection{Internal Ext}

Let $A$ be an ind-algebra in $\C$, and $N\in \C$ an $A$-module. 
Then we have a functor of internal Hom into this module, 
$\underline{\rm Hom}_A(?,N)$. Namely, 
if $A\otimes Y\to A\otimes X\to M\to 0$ is a presentation 
of $M$, then $\underline{\rm Hom}_A(M,N)$ is the kernel 
of the map $X^*\otimes N\to Y^*\otimes N$ induced by the composite 
morphism $Y\to A\otimes X\to N\otimes N^*\otimes X$
 (note that in general $X,Y$ are ind-objects and $X^*,Y^*$ are pro-objects of $\C$).  
  
The functor $\underline{\rm Hom}_A(?,N)$ is left exact, and thus we can define 
its right derived functors $\underline{\rm Ext}^i_A(?,N)$, which  take values in pro-objects of $\C$ (even if $M\in \C$) and may be computed using a resolution of $M$ by free $A$-modules. Moreover, $\underline{\rm Ext}_A^\bullet(N,N)$ is an algebra under the 
Yoneda product. We will call the functors $\underline{\rm Ext}^i_A$ the {\it internal Ext functors}. 

\subsection{Koszul objects} 

\begin{definition}
An object $V\in \C$ is {\it Koszul} if the complex $K^\bullet(V)$ is exact. 
\end{definition}

\begin{proposition}\label{kosdual} If $V\in \C$ is Koszul then 
we have canonical graded algebra isomorphisms $\underline{\rm Ext}_{SV}^\bullet(\bold 1,\bold 1)=\wedge^\bullet V^*$, 
$\underline{\rm Ext}_{\wedge V^*}^\bullet(\bold 1,\bold 1)=S^\bullet V$, where $\bold 1$ is the augmentation module. 
Here the diagonal grading coincides with the cohomological grading. In other words, the internal Yoneda algebra 
of $SV$ is $\wedge V^*$, and vice versa. 
\end{proposition}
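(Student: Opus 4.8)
The plan is to use the Koszul complex itself as the free resolution computing the internal Ext algebra, just as in the classical duality between a polynomial algebra and an exterior algebra, and to read off the product from the Hopf structure of $\Lambda V$ (resp. $\Bbb S V^*$). First, to compute $\underline{\rm Ext}^\bullet_{SV}(\bold 1,\bold 1)$ as a graded object: since $V$ is Koszul, Definition \ref{exaa} says that $K^\bullet(V)=SV\otimes\Lambda^\bullet V$ is a resolution of $\bold 1$ by free $SV$-modules, with $SV\otimes\Lambda^i V$ in homological degree $i$. Applying $\underline{\rm Hom}_{SV}(?,\bold 1)$ and the free-module identity $\underline{\rm Hom}_{SV}(SV\otimes X,\bold 1)=X^*$ (for dualizable $X$), the $i$-th term becomes $(\Lambda^i V)^*=\wedge^i V^*$, where I use $(\wedge V)^*_{\rm res}=\Lambda V^*$ with $V$ replaced by $V^*$. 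The resulting differential vanishes, because $\partial$ raises the $SV$-degree by one and is therefore annihilated after composing with the augmentation $SV\to\bold 1$, which kills $S^{\ge 1}V$. Hence $\underline{\rm Ext}^i_{SV}(\bold 1,\bold 1)=\wedge^i V^*$, and since the factor $\Lambda^i V$ sits in diagonal degree $i$, the diagonal grading coincides with the cohomological grading.

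The step I expect to be the main obstacle is identifying the Yoneda product with the exterior multiplication. Here I would use that $\Lambda V$ is a Hopf algebra and that $K^\bullet(V)$ is a DG algebra. The comultiplication $\Delta_{\Lambda V}$ yields an $SV$-linear map $\Delta=1_{SV}\otimes\Delta_{\Lambda V}\colon K^\bullet(V)\to K^\bullet(V)\otimes_{SV}K^\bullet(V)=SV\otimes\Lambda^\bullet V\otimes\Lambda^\bullet V$ lifting the canonical map $\bold 1\to\bold 1\otimes_{SV}\bold 1$. The crucial verification is that $\Delta$ is a chain map, i.e. that $\partial$ is a coderivation for $\Delta_{\Lambda V}$; this should follow from $\partial$ being a derivation of the DG algebra $SV\otimes\Lambda V$ together with the Hopf compatibility of $\Lambda V$. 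Given such a coassociative diagonal, the Yoneda product on $\underline{\rm Hom}_{SV}(K^\bullet(V),\bold 1)=(\Lambda^\bullet V)^*$ is $\phi\otimes\psi\mapsto(\phi\otimes\psi)\circ\Delta_{\Lambda V}$, i.e. the multiplication of the restricted-dual Hopf algebra $(\Lambda V)^*_{\rm res}=\wedge V^*$, which is exactly the exterior product. This gives the graded algebra isomorphism $\underline{\rm Ext}^\bullet_{SV}(\bold 1,\bold 1)=\wedge^\bullet V^*$.

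For the reverse isomorphism I would repeat the computation over $\wedge V^*$, using the dual Koszul complex $\widetilde K^\bullet=\wedge V^*\otimes\Bbb S^\bullet V^*$, whose homological degree $j$ term is the free module $\wedge V^*\otimes\Bbb S^j V^*$ and whose differential combines the multiplication of $\wedge V^*$ with the comultiplication of $\Bbb S V^*$. The only new ingredient is the exactness of $\widetilde K^\bullet$, which I would deduce directly from that of $K^\bullet(V)$: for each diagonal degree $n\ge 1$ the degree-$n$ part of $K^\bullet(V)$ is a finite complex $S^0V\otimes\Lambda^n V\to\cdots\to S^n V\otimes\Lambda^0 V$ of dualizable objects, and the exact contravariant duality of $\C$ identifies its term-by-term dual, after reversing the homological grading and the braiding, with the degree-$n$ part of $\widetilde K^\bullet$ (using $(S^{n-i}V\otimes\Lambda^i V)^*=\Bbb S^{n-i}V^*\otimes\wedge^i V^*$); since duality preserves exactness, $\widetilde K^\bullet$ is a resolution of $\bold 1$.

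Applying $\underline{\rm Hom}_{\wedge V^*}(?,\bold 1)$ and the comultiplication of $\Bbb S V^*$ as in the first case, the same vanishing-differential argument then yields $\underline{\rm Ext}^\bullet_{\wedge V^*}(\bold 1,\bold 1)=(\Bbb S^\bullet V^*)^*=S^\bullet V$ as a graded algebra, the product now being dual to the comultiplication of $\Bbb S V^*$, i.e. the symmetric multiplication of $(\Bbb S V^*)^*_{\rm res}=SV$. The point requiring care here is to check that the differential of $\widetilde K^\bullet$ is genuinely the transpose of $\partial$ under the duality, so that the identification of complexes above is one of complexes and not merely of graded objects; granting this, both isomorphisms follow, and their canonicity is clear since no choices enter beyond the coevaluations and the Hopf structures already fixed on $SV$, $\wedge V$ and their duals.
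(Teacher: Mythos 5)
Your proposal is correct and follows the same route as the paper, which simply uses $K^\bullet(V)$ as a free resolution of $\bold 1$ over $SV$ and its restricted dual as a free resolution of $\bold 1$ over $\wedge V^*$, computes internal Hom into $\bold 1$, and checks compatibility with products "as in the classical case." You have merely written out the classical details the paper leaves implicit (the vanishing of the induced differential, the identification of the Yoneda product via the Hopf comultiplication, and the exactness of the dualized complex), all of which are carried out correctly.
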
 

\begin{proof} The proof is the same as in the classical case. Namely, we will 
use $K^\bullet(V)$ as a free resolution of $\bold 1$ as an $SV$-module.
Computing its internal Hom into $\bold 1$, we get the desired isomorphism 
(and it is checked as usual that it is compatible with products). 
Similarly, the restricted dual of $K^\bullet(V)$ may be used as 
a projective resolution of $\bold 1$ over $\wedge V^*$, 
and the argument works the same way.  
\end{proof} 

\subsection{The De Rham complex} \label{drc}

Another useful complex that we can attach to an object $V\in \C$ is the {\it De Rham} complex, the differential graded algebra 
$DR^\bullet(V)=SV\otimes \Lambda^\bullet V$ (i.e., same as $K^\bullet(V)$ as a graded algebra)
whose differential $d : DR^\bullet(V)\to DR^{\bullet+1}(V)$ is
the composition 
$$
SV\otimes \Lambda V\to V\otimes V^*\otimes SV\otimes \Lambda V\to V^*\otimes SV\otimes V\otimes \Lambda V\to SV\otimes \Lambda V
$$
given by the formula 
$$
d=(\bar\mu_*\otimes \bar\mu)\circ P_{1,23}\circ ({\rm coev}_V\otimes 1_{SV}\otimes 1_{\Lambda V}),
$$
where $P_{1,23}$ is the permutation of the first component with the product of the second and third one, 
$\bar\mu: V\otimes \Lambda V\to \Lambda V$ is the multiplication map of $\Lambda V$, and $\bar\mu_*: V^*\otimes SV\to SV$ is the ``differentiation" map obtained by dualizing the 
multiplication map of $\Bbb SV^*$. 

For example, if $V$ is a vector space, then 
$\Lambda V=\wedge V$, and we can define $DR^\bullet(V)$ as 
the differential graded algebra $SV\otimes \wedge^\bullet V$ with differential 
defined on generators by the formula $d(x,y)=(y,0)$, $x,y\in V$. 
In other words, $DR^m(V)$ is the space of differential $m$-forms
on $V^*$, and $d$ is the usual exterior differential.  

In characteristic zero, we have an isomorphism 
$DR^\bullet(V)_{\rm res}^*=K^\bullet(V^*)$, which in particular implies that $DR^\bullet(V)$ is exact in the sense of Definition \ref{exaa} 
(the Poincar\'e lemma). 
However, this is, in general, not true (and the Poincar\'e lemma fails) in positive characteristic. 
For instance, if $V$ is a vector space, then we have following well known result
(which one can prove for 1-dimensional spaces and then take the tensor product): 

\begin{proposition}\label{cartier} For any vector space $V$ over $\kk$ of characteristic $p$,
we have a canonical isomorphism  
$H^*(DR^\bullet(V))\cong SV^{(1)}\otimes \wedge V^{(1)}$,
where $V^{(1)}$ is the Frobenius twist of $V$ (called the Cartier isomorphism). 
Under this isomorphism, elements of diagonal degree $d$ 
on the RHS go to those of degree $pd$ on the LHS. 
\end{proposition}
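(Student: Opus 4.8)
The plan is to reduce to the one-dimensional case and then tensor up, exactly as the remark before the statement suggests.

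\textbf{Step 1 (one-dimensional case).} First I would treat $\dim V=1$, say $V=\kk x$. Then $SV=\kk[x]$, $\wedge V=\kk\oplus\kk\,dx$, and $DR^\bullet(V)$ is the two-term complex $\kk[x]\xrightarrow{\,d\,}\kk[x]\,dx$ with $d(x^n)=nx^{n-1}\,dx$. A direct computation in characteristic $p$ gives $H^0=\Ker d=\kk[x^p]$ (the only functions killed by $d$ are the polynomials in $x^p$) and $H^1={\rm coker}\,d$, spanned by the classes of the monomials $x^{kp+p-1}\,dx=(x^p)^k\,(x^{p-1}\,dx)$ for $k\ge 0$. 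Hence $H^0=(SV)^{(1)}$ and $H^1=\kk[x^p]\cdot[x^{p-1}\,dx]=(SV)^{(1)}\otimes(\wedge V)^{(1)}$, with $[x^{p-1}\,dx]$ playing the role of the Frobenius twist of $dx$. This already displays the multiplication of degrees by $p$: the generator $x^p$ of $H^0$ and the generator $[x^{p-1}\,dx]$ of $H^1$ both have diagonal degree $p$, matching degree $1$ on the right-hand side.

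\textbf{Step 2 (tensoring).} For general $V$ of dimension $n$ I would pick a decomposition $V=\bigoplus_{i=1}^n\kk x_i$. Since the De Rham complex of a direct sum is the graded tensor product, as a DG algebra, of the De Rham complexes of the summands, we have $DR^\bullet(V)\cong\bigotimes_{i=1}^n DR^\bullet(\kk x_i)$; as $\kk$ is a field, the Künneth formula then gives $H^\bullet(DR^\bullet(V))\cong\bigotimes_{i=1}^n H^\bullet(DR^\bullet(\kk x_i))$. Feeding in Step 1 and reassembling via $\bigotimes_i(S\,\kk x_i)^{(1)}=(SV)^{(1)}$ and $\bigotimes_i(\wedge\,\kk x_i)^{(1)}=(\wedge V)^{(1)}$ yields the claimed isomorphism $H^\bullet(DR^\bullet(V))\cong (SV)^{(1)}\otimes(\wedge V)^{(1)}$, the degree statement being inherited factor by factor from Step 1.

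\textbf{Step 3 (canonicity), the main obstacle.} The delicate point --- and the only one requiring genuine care --- is that the isomorphism be canonical, i.e. independent of the decomposition used in Step 2. I would secure this by giving an intrinsic description and checking it agrees with Steps 1--2. Define $C^{-1}\colon (SV)^{(1)}\otimes(\wedge V)^{(1)}\to H^\bullet(DR^\bullet(V))$ to be the algebra map determined on generators by sending $v^{(1)}\in S^1V^{(1)}$ (homological degree $0$) to the class of $v^p\in S^pV$ and $v^{(1)}\in\wedge^1V^{(1)}$ (homological degree $1$) to the class of $v^{p-1}\,dv\in S^{p-1}V\otimes\wedge^1 V$. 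One checks that $v^{p-1}\,dv$ is closed, since $d(v^{p-1}\,dv)=(p-1)v^{p-2}\,dv\wedge dv=0$, and that the assignment is well defined and multiplicative: additivity in degree $0$ is precisely the fact that the Frobenius $v\mapsto v^p$ is a ring endomorphism of $SV$ in characteristic $p$ (the cross terms $\binom{p}{k}$ vanish for $0<k<p$), while additivity in degree $1$ is the standard Cartier lemma that $(v+w)^{p-1}\,d(v+w)-v^{p-1}\,dv-w^{p-1}\,dw$ is exact. The resulting map $C^{-1}$ is the inverse Cartier operator; it is manifestly natural in $V$ (in particular $GL(V)$-equivariant), hence basis-independent, and it restricts on each line $\kk x_i$ to the isomorphism of Step 1, so the Künneth computation of Step 2 shows it is an isomorphism. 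Steps 1 and 2 are routine; the content lies in the well-definedness and naturality verified in Step 3.
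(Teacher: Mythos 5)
Your proposal is correct and follows exactly the route the paper indicates: it only sketches the argument parenthetically ("one can prove for 1-dimensional spaces and then take the tensor product"), and your Steps 1--2 fill in precisely that computation. Your Step 3, exhibiting the inverse Cartier map $v^{(1)}\mapsto [v^p]$, $v^{(1)}\mapsto[v^{p-1}\,dv]$ to justify canonicity, is a worthwhile addition the paper leaves implicit, but it is a completion of the same argument rather than a different one.
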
 

Also, we have the following result. 
Let ${\rm char}(\kk)=p$. Recall that an additive functor $F$ between $\kk$-linear categories 
is called twisted $\kk$-linear if $F(\lambda \alpha)=\lambda^p F(\alpha)$ for a morphism $\alpha$ and a scalar $\lambda \in \kk$.  

\begin{proposition}\label{divi} (i) For any symmetric tensor category $\C$ over $\kk$ and any $V\in \C$, 
the De Rham cohomology $H^*(DR^\bullet(V))$ lives in diagonal degrees 
divisible by $p$. 

(ii) The functor $V\mapsto H^*(DR^\bullet(V))[p]$ (diagonal degree $p$) 
is twisted $\kk$-linear.  
\end{proposition}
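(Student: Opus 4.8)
The plan is to deduce both parts from a categorical Cartan homotopy formula relating the De Rham differential $d$ to the Koszul differential $\partial$, together with the behaviour of $DR^\bullet$ under direct sums.

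For part (i), the key assertion is that on the diagonal-degree-$n$ component one has
$$
d\partial+\partial d=n\cdot\mathrm{id}.
$$
I would prove this directly in $\C$. Writing $\mathrm{coev}_V$ formally as $\sum_a v_a\otimes\xi^a$, the two differentials read $d=\sum_a \partial^S_{\xi^a}\otimes w^\Lambda_{v_a}$ and $\partial=\sum_b m^S_{v_b}\otimes \iota^\Lambda_{\xi^b}$, where $m^S,\partial^S$ are multiplication and differentiation on $SV$, and $w^\Lambda,\iota^\Lambda$ are multiplication and contraction on $\Lambda V$. Expanding $d\partial+\partial d$ and applying the two defining relations — the Weyl relation $\partial^S_\xi m^S_v-m^S_v\partial^S_\xi=\langle\xi,v\rangle$ in $SV$ and the Clifford relation $w^\Lambda_v\iota^\Lambda_\xi+\iota^\Lambda_\xi w^\Lambda_v=\langle\xi,v\rangle$ in $\Lambda V$ — and contracting the coevaluation via the snake identities, the cross terms collapse and leave $N_S\otimes 1+1\otimes N_\Lambda$, where $N_S=\sum_a m^S_{v_a}\partial^S_{\xi^a}$ and $N_\Lambda=\sum_a w^\Lambda_{v_a}\iota^\Lambda_{\xi^a}$ are the degree operators. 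That $N_S$ acts by $j$ on $S^jV$ follows because it is a derivation acting by $1$ on the degree-$1$ generators of $SV$; that $N_\Lambda$ acts by $i$ on $\Lambda^iV$ follows by dualizing to the exterior algebra $\wedge V^*$, which is generated in degree $1$. Hence $d\partial+\partial d=(j+i)\,\mathrm{id}=n\,\mathrm{id}$ on $S^jV\otimes\Lambda^iV$. Consequently, whenever $p\nmid n$ the morphism $\tfrac1n\partial$ is a contracting homotopy for $(DR^\bullet(V)_n,d)$, so $H^*(DR^\bullet(V))_n=0$, which is exactly (i).

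For part (ii), the scalar condition is immediate: on $S^{p-i}V\otimes\Lambda^iV$ the functoriality map attached to $\lambda\alpha$ scales by $\lambda^{p-i}\lambda^i=\lambda^p$, since $S^k$ and $\Lambda^k$ are homogeneous of degree $k$; hence $H^*(DR^\bullet(\lambda\alpha))[p]=\lambda^p H^*(DR^\bullet(\alpha))[p]$. The real content is additivity, which I would obtain from the isomorphism of DG algebras $DR^\bullet(V\oplus W)\cong DR^\bullet(V)\otimes DR^\bullet(W)$ (coming from $S(V\oplus W)=SV\otimes SW$ and $\Lambda(V\oplus W)=\Lambda V\otimes\Lambda W$, with differential $d_V\otimes 1+1\otimes d_W$ up to the evident Koszul sign implemented by the braiding). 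In diagonal degree $p$ this is the total complex of $\bigoplus_{k=0}^{p}DR^\bullet(V)_k\otimes DR^\bullet(W)_{p-k}$. Since tensoring is exact in a rigid category, tensoring the acyclic complexes produced by part (i) with anything remains acyclic; thus every summand with $0<k<p$ is acyclic, while the $k=0$ and $k=p$ summands contribute exactly $H^*(DR^\bullet(W))[p]$ and $H^*(DR^\bullet(V))[p]$, using $DR^\bullet(V)_0=\bold 1$. Hence $H^*(DR^\bullet(-))[p]$ preserves finite biproducts, and a functor between additive categories preserving biproducts is additive; combined with the scalar computation this shows the functor is twisted $\kk$-linear.

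The main obstacle is the sign bookkeeping in the categorical Cartan identity: the Clifford anticommutator and the parities of $d$ and $\partial$ are encoded by the symmetry of $\C$ rather than by literal signs, so one must track the braiding carefully to be certain the resulting scalar is $+n$ and not a sign-corrupted variant. As a safety check one can verify the identity in $\mathrm{Vect}$ and invoke the universality of natural transformations between Schur functors. A secondary point requiring care in (ii) is the Künneth step: that the total complex of a tensor product with a bounded acyclic factor is acyclic, which follows from exactness of $\otimes$ and convergence of the spectral sequence of the bounded double complex.
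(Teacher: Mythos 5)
Your proposal is correct and follows essentially the same route as the paper: part (i) is the Cartan homotopy formula $d\partial+\partial d=E$ (with $E$ the Euler/degree operator, so that $\tfrac1n\partial$ contracts the complex in diagonal degree $n$ prime to $p$), and part (ii) is the K\"unneth decomposition of $DR^\bullet(V\oplus W)$ together with the vanishing from (i). You merely supply more detail than the paper does on verifying the Cartan identity categorically and on the biproduct-preservation step, and both elaborations are sound.
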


\begin{proof} Let $E$ be the Euler derivation of $DR^\bullet(V)$, i.e., the degree endomorphism.
Then by Cartan's formula, $E=d\circ \partial +\partial \circ d$, hence $E=d\circ \partial$ on ${\rm Ker}d$
(indeed, the Koszul differential $\partial$ coincides with the contraction endomorphism
$i_E$ associated to $E$). This implies the statement, since $E$ acts by a nonzero scalar in any degree not divisible by $p$. 

(ii) This follows from (i) and the K\"unneth formula. 
\end{proof} 

Note that in characteristic zero, this argument gives another proof of the Poincar\'e lemma (which 
replicates the standard proof in multivariable calculus). 

\section{Koszul algebras in symmetric tensor categories} 

As in the classical setting, the duality between $SV$ and $\wedge V^*$ discussed in the previous section is an instance of a more general phenomenon 
-- Koszul duality for associative algebras in symmetric tensor categories. The classical theory of Koszul algebras (see \cite{PP}) 
extends to the setting of symmetric tensor categories in a straightforward manner; so we will just state the results omitting most proofs (which are repetitions of classical proofs). 

Let $A$ be a $\Bbb Z_+$-graded algebra in a symmetric tensor category $\C$ with $A[0]=\bold 1$.
Namely, $A$ is a graded ind-object of $\C$, but for each $i$, the homogeneous part $A[i]$ of degree $i$ is an actual object of $\C$.  

\begin{definition} We say that $A$ is {\it quadratic} if $\underline{\rm Ext}^i_A(\bold 1,\bold 1)$ sits in degree\footnote{The natural grading for the Ext algebra is by negative integers, but we reverse the sign for convenience.} $i$ for $i=1,2$, and {\it Koszul} if this condition holds for all $i\ge 1$.  
\end{definition} 

As usual, $A$ is quadratic if and only if it is generated in degree $1$ with defining relations in degree $2$.  

Given a quadratic algebra $A$, we can define the quadratic dual $A^!$, the quadratic algebra generated by $A^![1]=A[1]^*$ 
with relations $R^\perp$, where $R$ is the object of relations of $A$, i.e., 
the kernel of the multiplication map $A[1]\otimes A[1]\to A[2]$. Then $(A^!)^!=A$. 
For example, $(SV)^!=\wedge V^*$ and $(\wedge V)^!=SV^*$. 

Also, for a quadratic $A$ we can define the {\it Koszul} complex $K^\bullet_A$ of $A$ by $K^i_A:=A\otimes A^![i]^*$,
with differential defined by the formula
$$
\partial=(\mu\otimes \mu_*)\circ P_{23}\circ ({\rm coev}_{A[1]}\otimes 1_{A}\otimes 1_{A^!}),
$$
where $\mu: A[1]\otimes A\to A$ is the multiplication map, and 
$$
\mu_*: A^![1]\otimes A^![i]^*\to A^![i-1]^*
$$ 
is obtained by dualizing the multiplication map of $A^!$. 

\begin{proposition}\label{kosdual1} 
(i) $A$ is Koszul if and only if the Koszul complex $K^\bullet_A$ is exact. 
In this case, $A^!$ is also Koszul and $\underline{\rm Ext}_{A^!}^\bullet(\bold 1,\bold 1)=A$. 

(ii) $A$ is Koszul if and only if the augmentation module $\bold 1$
admits a resolution $F^\bullet$ by free $A$-modules such that $F^i$ 
is generated in degree $i$. 
\end{proposition}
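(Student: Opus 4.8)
The plan is to transcribe the classical development of \cite{PP}, checking at each step that the relevant homological constructions make sense in $\C$. The one structural input needed is that, since $A[0]=\bold 1$ and each graded piece $A[i]$ is a genuine (rigid) object of $\C$, the reduced bar complex with $i$-th term $A\otimes A_+^{\otimes i}$ (where $A_+=\oplus_{i\ge 1}A[i]$) is a free resolution of $\bold 1$, and minimal free resolutions of $\bold 1$ exist and are unique, built degree by degree by splitting off minimal generators $M\mapsto M/A_+M$. From the bar resolution one reads off the universal lower bound that $\underline{\rm Ext}^i_A(\bold 1,\bold 1)$ always lives in internal degrees $\ge i$; hence quadraticity and Koszulity assert precisely that this bound is attained (for $i\le 2$, resp.\ for all $i$).

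I would prove (ii) first. For the ``if'' direction, given a free resolution $F^\bullet\to\bold 1$ with $F^i=A\otimes W_i$ and $W_i$ concentrated in degree $i$, the complex $\underline{\rm Hom}_A(F^\bullet,\bold 1)$ has $i$-th term $W_i^*$ concentrated in degree $i$; since its differentials preserve internal degree and thus vanish between pure pieces of different degree, $\underline{\rm Ext}^i_A(\bold 1,\bold 1)$ sits in degree $i$ and $A$ is Koszul. For the ``only if'' direction I would take the minimal resolution $P^\bullet$ of $\bold 1$; minimality means every differential has image in $A_+P^\bullet$, so $\underline{\rm Hom}_A(P^\bullet,\bold 1)$ has zero differential and $\underline{\rm Ext}^i_A(\bold 1,\bold 1)=W_i^*$, where $W_i$ is the object of degree-$i$ minimal generators of $P^i$. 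Koszulity then forces $W_i$ to be concentrated in degree $i$, i.e.\ $P^i$ is generated in degree $i$.

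For (i), one implication is a direct computation: if $K^\bullet_A$ is exact it is a free resolution with $K^i_A=A\otimes A^![i]^*$ generated in degree $i$, so (ii) shows $A$ is Koszul; moreover the Koszul differential factors through $A_+\otimes(-)$, so $\underline{\rm Hom}_A(K^\bullet_A,\bold 1)$ has vanishing differential, giving $\underline{\rm Ext}^\bullet_A(\bold 1,\bold 1)=A^!$ as a graded object, and one checks that the Yoneda product is the multiplication of $A^!$. For the converse, Koszulity gives by (ii) a linear minimal resolution $P^\bullet$, and the task is to identify it with $K^\bullet_A$. Here I would use the intersection description of the quadratic dual, namely that $A^![i]^*$ is the subobject $\bigcap_{j=0}^{i-2}A[1]^{\otimes j}\otimes R\otimes A[1]^{\otimes(i-2-j)}$ of $A[1]^{\otimes i}$, where $R$ is the space of relations; this simultaneously exhibits $K^\bullet_A$ as a subcomplex of the bar complex (so that $\partial^2=0$ follows from the orthogonality between $R$ and $R^\perp$) and lets one match the degree-$i$ syzygies of $P^\bullet$ with $A^![i]^*$ inductively. \emph{I expect this inductive identification of the higher syzygies to be the main obstacle}: classically it is exactly the point where one invokes the distributivity (Backelin) characterization of Koszulity, asserting that the lattice of subobjects of $A[1]^{\otimes n}$ generated by the shifted copies of $R$ is distributive, and one must verify that the lattice-theoretic arguments remain valid for subobjects in the abelian category $\C$.

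Finally, for the remaining assertions of (i), I would deduce that $A^!$ is Koszul from the symmetry of this distributivity criterion under $R\leftrightarrow R^\perp$: orthocomplementation is a lattice anti-isomorphism and preserves distributivity, so $A$ is Koszul if and only if $A^!$ is. Since $A^!$ is then Koszul, $K^\bullet_{A^!}$ is exact by the equivalence just established, and computing $\underline{\rm Ext}$ from this resolution as above, together with $(A^!)^!=A$, yields the graded algebra identification $\underline{\rm Ext}^\bullet_{A^!}(\bold 1,\bold 1)=A$.
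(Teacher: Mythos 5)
Your proposal is correct and follows the same overall strategy as the paper, which simply transports the classical arguments of \cite{PP} to $\C$; the forward direction of (i) (exactness of $K^\bullet_A$ gives $\underline{\rm Hom}_A(K^\bullet_A,\bold 1)=A^!$ with vanishing differential for degree reasons) is word-for-word the paper's argument. There are two genuine differences of route. First, the logical order: you establish (ii) via minimal resolutions and then feed it into (i), whereas the paper proves (i) first (showing by induction that for Koszul $A$ the complex $K^\bullet_A$ \emph{is} the minimal resolution) and gets (ii) as a corollary by taking $F^\bullet=K^\bullet_A$; the two inductions are essentially the same computation. Second, for the Koszulity of $A^!$ the paper uses the observation that $K^\bullet_{A^!}$ is the restricted dual of $K^\bullet_A$ (duality is exact in a rigid category, so exactness transfers immediately), while you invoke the $R\leftrightarrow R^\perp$ symmetry of the distributivity criterion; both work, and the paper does endorse the lattice-theoretic machinery elsewhere (proof of Theorem \ref{KdP}), but the duality argument is shorter and avoids re-verifying Backelin's criterion in $\C$. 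One caveat on your framework: your blanket claim that minimal free resolutions of $\bold 1$ ``exist and are unique, built by splitting off minimal generators $M\mapsto M/A_+M$'' is not automatic in an abelian category, since the epimorphism $\Omega^i\to\Omega^i/A_+\Omega^i$ need not admit a section. This does not break your proof of (ii), because in the inductive construction the Koszulity hypothesis forces $\Omega^i/A_+\Omega^i$ to be concentrated in the lowest degree $i$ of $\Omega^i$, where the quotient map is an isomorphism and no choice of splitting is needed; but the argument should be phrased as this degree-by-degree induction rather than as an appeal to a pre-existing minimal resolution.
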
 

\begin{proof} (i) Suppose $K^\bullet_A$ is exact. Then $K^\bullet_A$ 
is a resolution of $\bold 1$, which may be used to compute 
$\underline{\rm Ext}_A^\bullet(\bold 1,\bold 1)$. 
We have $\underline{\rm Hom}(K^\bullet_A,\bold 1)=A^!$, and it is clear that the differential is zero, since each term of the complex is in a different degree, while the differential has degree zero. Thus, $\underline{\rm Ext}_A^\bullet(\bold 1,\bold 1)=A^!$, and it is checked as in the classical case that this is an isomorphism of algebras. In particular, $A$ is Koszul. 
This also implies that $A^!$ is Koszul, since $K^\bullet_{A^!}$ is the 
restricted dual of $K^\bullet_A$. 

Conversely, if $A$ is Koszul then it is shown by induction as in the classical case that 
$K^\bullet_A$ is the minimal resolution of $\bold 1$ by free $A$-modules; in particular, it is exact. 

(ii) If $A$ admits such a resolution, then by taking internal Hom into $\bold 1$ we see that 
$A$ is Koszul. Conversely, if $A$ is Koszul then we can set $F^\bullet=K^\bullet_A$.
\end{proof} 

Thus, we obtain 

\begin{proposition}\label{koszobj} $V\in \C$ is Koszul if and only if either of 
the algebras $SV$ and $\wedge V^*$ is Koszul. 
\end{proposition}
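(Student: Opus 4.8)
The plan is to reduce the statement entirely to Proposition \ref{kosdual1}(i), which already characterizes Koszulity of a quadratic algebra $A$ via exactness of its Koszul complex $K^\bullet_A$ and asserts that $A$ is Koszul iff $A^!$ is. The one thing that needs to be observed is that, for $A=SV$, the \emph{algebraic} Koszul complex $K^\bullet_{SV}$ is literally the same complex as the \emph{object} Koszul complex $K^\bullet(V)$ of the first Definition; once this identification is made, everything follows formally.

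First I would match the terms. By definition of the quadratic dual we have $(SV)^!=\wedge V^*$, and since the restricted dual of $\wedge V^*$ is $\Lambda V$, the graded duals satisfy $(\wedge^i V^*)^*=\Lambda^i V$. Hence $K^i_{SV}=SV\otimes (SV)^![i]^*=SV\otimes \Lambda^i V=K^i(V)$. Next I would check that the two differentials coincide. Substituting $A=SV$ into the algebraic differential $(\mu\otimes\mu_*)\circ P_{23}\circ ({\rm coev}_{A[1]}\otimes 1_A\otimes 1_{A^!})$, we use $A[1]=V$, so that ${\rm coev}_{A[1]}={\rm coev}_V$; the map $\mu\colon A[1]\otimes A\to A$ is exactly the multiplication $V\otimes SV\to SV$; and $\mu_*\colon A^![1]\otimes A^![i]^*\to A^![i-1]^*$ is the dualized multiplication of $A^!=\wedge V^*$, which is precisely the map $\mu_*\colon V^*\otimes \Lambda^i V\to \Lambda^{i-1}V$ appearing in the Definition. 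Thus the two formulas define the same morphism, and $K^\bullet_{SV}=K^\bullet(V)$ as complexes.

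With this identification, $V$ is Koszul (i.e.\ $K^\bullet(V)$ is exact, by definition of a Koszul object) if and only if $K^\bullet_{SV}$ is exact, which by Proposition \ref{kosdual1}(i) is equivalent to $SV$ being Koszul. For the last equivalence I would invoke Koszul duality: Proposition \ref{kosdual1}(i) gives that $A$ is Koszul iff $A^!$ is, and $(A^!)^!=A$. Applying this with $A=SV$ and $(SV)^!=\wedge V^*$ (so that $(\wedge V^*)^!=SV$) shows that $SV$ is Koszul iff $\wedge V^*$ is Koszul. Chaining the two equivalences yields the claim.

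The only nontrivial point, and thus the main (if mild) obstacle, is the identification of the two Koszul complexes in the first step: it amounts to pinning down the dualization conventions, namely $(\wedge^i V^*)^*=\Lambda^i V$, and unwinding both differential formulas to verify they are the same morphism. Once this bookkeeping is settled, the equivalences follow purely formally from Proposition \ref{kosdual1}.
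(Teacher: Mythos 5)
Your proof is correct and follows essentially the same route as the paper, which derives the proposition directly from Proposition \ref{kosdual1} (the paper simply writes ``Thus, we obtain'' with no further argument, the implicit content being exactly your identification $K^\bullet_{SV}=K^\bullet(V)$ together with $(SV)^!=\wedge V^*$ and the duality statement of Proposition \ref{kosdual1}(i)). Your explicit verification that the two differentials and the terms $(\wedge^i V^*)^*=\Lambda^i V$ match is the right bookkeeping to make the deduction honest.
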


Analogously to the classical case, we have Drinfeld's ``Koszul deformation principle" for 
Koszul algebras. 

\begin{theorem}\label{KdP} Let $A$ be a graded quadratic deformation over $\kk[[\hbar]]$ of a Koszul algebra $\bar A$ in a symmetric tensor category $\C$ (i.e., we formally deform the generators and the relations). Then $A$ is a flat deformation if it is flat in degrees 
$\le 3$. 
\end{theorem}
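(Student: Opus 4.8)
The plan is to transplant the classical proof of the Koszul deformation principle (\cite{PP}) into $\C$, the only real work being to replace element-wise arguments by element-free ones valid in an abelian symmetric tensor category. Write $V=A[1]$ and let $R\subset V\otimes V$ denote the $\kk[[\hbar]]$-flat object of deformed relations, which reduces modulo $\hbar$ to the relations $\bar R$ of the Koszul algebra $\bar A$. Flatness of the graded $\kk[[\hbar]]$-module object $A$ is a condition in each diagonal degree $n$, namely that $A[n]$ be $\hbar$-torsion free and that $A[n]/\hbar A[n]\cong\bar A[n]$ with no further collapsing; equivalently $\mathrm{Tor}_1^{\kk[[\hbar]]}(A[n],\kk)=0$, where $\kk=\kk[[\hbar]]/(\hbar)$. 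So I would induct on $n$, the cases $n\le 3$ being exactly the hypothesis, and try to propagate flatness upward.

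The engine is the Koszul complex $K^\bullet_A$ of Proposition \ref{kosdual1}, whose term in homological degree $i$ is $A\otimes R^{(i)}$ with $R^{(i)}:=A^![i]^*=\bigcap_{a+b=i-2}V^{\otimes a}\otimes R\otimes V^{\otimes b}\subset V^{\otimes i}$. In diagonal degree $n\ge 1$ this is a finite complex $0\to R^{(n)}\to A[1]\otimes R^{(n-1)}\to\cdots\to A[n]\to 0$ whose reduction modulo $\hbar$ is the degree-$n$ part of $K^\bullet_{\bar A}$, and the latter is exact because $\bar A$ is Koszul (Proposition \ref{kosdual1}(i)). Granting (see below) that every intersection term $R^{(i)}$ is flat over $\kk[[\hbar]]$ and reduces correctly, all terms of this complex except the bottom one $A[n]$ are flat — the factors $A[m]$ with $m<n$ by the inductive hypothesis. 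The standard lemma that a bounded complex of flat $\kk[[\hbar]]$-module objects whose reduction modulo $\hbar$ is exact is itself exact, applied to the truncation of $K^\bullet_A$ consisting of these flat terms, then exhibits $A[n]$ as the cokernel of a map of flat objects that stays injective after reduction; hence $A[n]$ is flat and reduces to $\bar A[n]$, closing the induction.

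The crux, and the only place the Koszul hypothesis is genuinely used, is the flatness of the intersection terms $R^{(i)}$: formation of meets of subobjects does not commute with reduction modulo $\hbar$, and ruling out such jumps is precisely the content of the principle. I would control this through the distributive-lattice characterization of Koszulity: $\bar A$ is Koszul exactly when, for every $n$, the subobjects $\bar V^{\otimes i}\otimes\bar R\otimes\bar V^{\otimes n-2-i}$ generate a distributive lattice inside $\bar V^{\otimes n}$, and by the locality of these insertions this global distributivity is detected by adjacent pairs, i.e. by the meet and join of $R\otimes V$ and $V\otimes R$ inside $V^{\otimes 3}$ — precisely the degree-$3$ data. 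Flatness of $A$ in degree $3$ keeps these local meets and joins flat and correctly reducing, and a deformation argument for distributive lattices (where a common refinement makes the ranks of all meets and joins additive, hence forced to remain constant under the deformation) then propagates distributivity to every degree, making all the meets $R^{(i)}$ and joins flat. I expect the genuine obstacle to be exactly this lattice step in the categorical setting: one must run Backelin's distributivity reduction entirely inside the subobject lattice of the abelian category $\C$, with the dimension of a subspace replaced by the length or a suitable additive rank function for $\kk[[\hbar]]$-module objects of $\C$, rather than with vectors. Equivalently — and this is the phrasing I would fall back on if the lattice bookkeeping becomes unwieldy — one can encode the same content homologically, as the concentration of the internal $\underline{\Ext}$-algebra of $\bar A$ on the diagonal (Koszulity), which forces every higher flatness obstruction to be determined by the single degree-$3$ obstruction.
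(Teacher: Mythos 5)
Your proposal follows essentially the same route as the paper, which simply defers to the classical argument of Polishchuk--Positselski (Section 6) and notes that the theory of distributive lattices of subobjects carries over to symmetric tensor categories; your elaboration of the induction via the degree-$n$ strand of $K^\bullet_A$, with the crux being flatness of the intersection terms $R^{(i)}$ controlled by distributivity of the special fiber plus the degree-$\le 3$ flatness of the adjacent meets and joins, is exactly the intended argument. The only caution is that the reduction of global distributivity to ``adjacent pairs'' should be quoted in the precise Backelin form rather than as literal pairwise distributivity, but this is the standard lattice bookkeeping from \cite{PP} and does not affect the approach.
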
 

\begin{proof} The proof is the same as in the classical case, see \cite{PP}, Section 6; namely, the theory
of distributive lattices extends to the setting of symmetric tensor categories.   
\end{proof} 

We note that Theorem \ref{KdP} applies to the situation when $A[1]$ is a nontrivial flat deformation of $\bar A[1]$. 

The following corollary of Theorem \ref{KdP} answers Question 3.5 in \cite{EHO} in the Koszul case (for $p\ge 5$). 

\begin{corollary}\label{koszulfil} Let $\C$ be a symmetric tensor category over a field $\kk$ 
of characteristic $p\ge 5$, and $Y$ is a filtered object of $\C$. If the object 
${\rm gr}Y$ is Koszul, then 

(i) the natural surjections 
$$
\phi_+: S({\rm gr}Y)\to {\rm gr}(SY),\ 
\phi_-: \wedge ({\rm gr}Y)\to {\rm gr}(\wedge Y)
$$
are isomorphisms;

(ii) $Y$ is a Koszul object.  
\end{corollary}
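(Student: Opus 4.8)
The plan is to deduce the statement from the Koszul deformation principle (Theorem \ref{KdP}) applied to the Rees construction of the filtered object $Y$. First I would form the Rees object $\mathcal{R}$ of $Y$ over $\kk[\hbar]$ (or its $\hbar$-adic completion $\kk[[\hbar]]$): using the filtration on $Y$ one builds a $\kk[\hbar]$-flat object $\mathcal{R}$ with $\mathcal{R}/\hbar\mathcal{R}\cong {\rm gr}Y$ and generic fiber $\mathcal{R}[\hbar^{-1}]\cong Y[\hbar^{\pm 1}]$; this is the categorical analogue of $\bigoplus_n F_nY\cdot\hbar^{n}$, and being functorial it avoids choosing a splitting of the filtration. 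Taking symmetric and exterior algebras over $\kk[\hbar]$ yields graded algebras $S(\mathcal{R})$ and $\wedge(\mathcal{R})$ whose degree-$n$ pieces reduce mod $\hbar$ to $S^n({\rm gr}Y)$ and $\wedge^n({\rm gr}Y)$ and whose generic fibers are $S^nY$ and $\wedge^nY$. The key dictionary is that $\kk[\hbar]$-flatness of $S^n(\mathcal{R})$ (resp.\ $\wedge^n(\mathcal{R})$) is exactly the assertion that the surjection $\phi_+$ (resp.\ $\phi_-$) is an isomorphism in degree $n$, since for a flat Rees family the special fiber is identified with the associated graded of the generic fiber. Thus (i) is equivalent to flatness of these families in all degrees.

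Next I would check the hypotheses of Theorem \ref{KdP}. Since ${\rm gr}Y$ is Koszul, $S({\rm gr}Y)$ is a Koszul algebra by Proposition \ref{koszobj}, and $S(\mathcal{R})$ is a graded quadratic deformation of it in which the generators $A[1]=\mathcal{R}$ are themselves deformed (the situation noted after Theorem \ref{KdP}). The remaining input is flatness in degrees $\le 3$, and this is where $p\ge 5$ enters: for $n<p$ the (anti)symmetrizer $\frac1{n!}\sum_{\sigma\in S_n}(\pm)\,\sigma$ is available, so $S^nV$ and $\wedge^nV$ are direct summands of $V^{\otimes n}$ cut out by idempotents natural in $V$; hence $S^n$ and $\wedge^n$ are exact functors that commute with passage to the associated graded. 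In particular $\phi_\pm$ are isomorphisms in all degrees $n\le 3<p$, i.e.\ $S(\mathcal{R})$ and $\wedge(\mathcal{R})$ are flat in degrees $\le 3$. Theorem \ref{KdP} then upgrades this to flatness in all degrees, proving (i) for $\phi_+$.

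For (ii), once $\phi_+$ is an isomorphism we have ${\rm gr}(SY)\cong S({\rm gr}Y)$, which is Koszul; since Koszulity of a filtered algebra is implied by Koszulity of its associated graded (a standard fact from the theory of \cite{PP}), $SY$ is Koszul, and therefore $Y$ is a Koszul object by Proposition \ref{koszobj}. The isomorphism $\phi_-$ I would obtain in the same way, applying Theorem \ref{KdP} to the deformation $\wedge(\mathcal{R})$ of $\wedge({\rm gr}Y)$; equivalently, one may identify this with the quadratic-dual deformation of $S(\mathcal{R})$ and deduce its flatness from that of $S(\mathcal{R})$, using that the Koszul complexes of a quadratic algebra and of its quadratic dual are restricted-dual to each other (Proposition \ref{kosdual1}).

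The main obstacle I anticipate is the bookkeeping that makes the reduction ``$\phi_\pm$ iso $\Leftrightarrow$ Rees family flat'' precise in the categorical setting: one must verify carefully that $S^n(\mathcal{R})/\hbar\cong S^n({\rm gr}Y)$, that the generic fiber is genuinely $S^nY$, and that the filtration on $SY$ is recovered as the $\hbar$-adic one, so that flatness reads off as the isomorphism statement. A secondary point requiring care is the exterior case: unlike $S({\rm gr}Y)$, Koszulity of $\wedge({\rm gr}Y)$ is not formally the same as Koszulity of ${\rm gr}Y$ (it amounts to Koszulity of $({\rm gr}Y)^*$), so for $\phi_-$ I would route the argument through the quadratic-duality identification above rather than invoking the deformation principle for $\wedge({\rm gr}Y)$ directly. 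Once these identifications are in place, the proof is a direct application of Theorem \ref{KdP}, with the low-degree flatness supplied for free by $p\ge 5$.
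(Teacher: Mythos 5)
Your proposal is correct and follows essentially the same route as the paper: realize $\phi_\pm$ being isomorphisms as flatness of the (Rees) quadratic deformation of $S({\rm gr}Y)$ (resp.\ $\wedge({\rm gr}Y)$), get flatness in degrees $\le 3$ from the availability of the (anti)symmetrizer idempotents when $p\ge 5$, and conclude by Theorem \ref{KdP}. The only cosmetic difference is in (ii), where the paper dualizes $\phi_-$ to identify ${\rm gr}K^\bullet(Y)$ with the exact complex $K^\bullet({\rm gr}Y)$, while you instead lift Koszulity of $S({\rm gr}Y)={\rm gr}(SY)$ to $SY$ and invoke Proposition \ref{koszobj} — both rest on the same filtered-to-graded transfer.
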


\begin{proof} (i) The proof mimicks the proof of Proposition 3.6 of \cite{EHO}. 
We prove the statement for $\phi_+$; the proof for $\phi_-$ is similar.

Let ${\rm Rees}(Y):=\prod_{j\ge 0}\hbar^j F_jY$. Then $S({\rm Rees}(Y))$ is a formal deformation of $S({\rm gr}Y)$, and (i) is equivalent to the statement that this deformation 
is flat. 

It is clear that this deformation is flat (i.e., $\phi_+$ is an isomorphism) in degrees $\le 3$, since
$S^2W=\bold e_2W^{\otimes 2}$ and $S^3W=\bold e_3W^{\otimes 3}$ for any $W\in {\mathcal C}$ 
(as ${\rm char}(\kk)\ne 2,3$). Hence, by Theorem \ref{KdP}, 
it is flat in all degrees, which yields (i). 

(ii) Dualizing $\phi_-$, we get a natural injection 
$\phi_-^*: {\rm gr}(\Lambda Y)\to \Lambda({\rm gr}Y)$, which is an isomorphism by (i). 
Hence, $K^\bullet({\rm gr}Y)\cong {\rm gr}K^\bullet(Y)$. Since $K^\bullet({\rm gr}Y)$ is exact, this implies 
that $K^\bullet(Y)$ is exact, hence $Y$ is Koszul.  
\end{proof} 

\begin{remark} Let $R\in {\rm End}(Y\otimes Y)$ be  a unitary solution of the quantum Yang-Baxter equation, i.e., 
$$
R^{21}R={\rm Id},\ R^{12}R^{13}R^{23}=R^{23}R^{13}R^{12}.
$$
Assume that $R$ preserves the filtration on $Y\otimes Y$, and ${\rm gr}R={\rm Id}$. 
Let $S^2_RY$ and $\wedge^2_RY$ be the invariants and anti-invariants of $P\circ R$, where 
$P\in {\rm End}(Y\otimes Y)$ is the permutation map, and let $S_RY:=TY/\langle \wedge^2_RY\rangle$, 
$\wedge_RY:=TY/\langle S^2_RY\rangle$ (so $S_RY=SY$ and $\wedge_R Y=\wedge Y$ if $R={\rm Id}$). Then Corollary 
\ref{koszulfil}(i) remains true (with the same proof) if $SV,\wedge V$ are replaced 
by $S_RV$, $\wedge_R V$. 
\end{remark} 

\begin{remark}\label{infii} We have outlined the theory of Koszul duality in the case when 
$A[i]$ are``finite dimensional", i.e., objects of $\C$; e.g., if $\C={\rm Vec}$, 
it covers only finitely generated algebras. However, as in the classical case, this theory 
admits a straightforward generalization to the case when $A[i]$ are ind-objects (e.g., $A=SV$ or $A=\wedge V$, where $V$ is an ind-object of $\C$). In this case, $A^![i]$ will be pro-objects of $\C$. 
\end{remark} 

\section{Lie algebras in symmetric tensor categories}

\subsection{Operadic Lie algebras} 
Recall (\cite{LV}) that the Lie operad ${\bf Lie}=\oplus_{n\ge 1}{\bf Lie}_n$ 
is the linear operad over $\Bbb Z$
generated by a single antisymmetric element $b\in {\bf Lie}(2)$ (the bracket) 
with defining relation
$$
b\circ (b\otimes 1)\circ ({\rm Id}+(123)+(132))=0
$$
in  ${\bf Lie}(3)$ (the Jacobi identity). 
For a commutative ring $R$, we will denote ${\bf Lie}_n\otimes R$ by ${\bf Lie}_n(R)$. 
Let $b_n\in {\bf Lie}_n$ be defined recursively by $b_2=b$, 
$b_n=b\circ (b_{n-1}\otimes 1)$. 
It is well known that ${\bf Lie}_n$ is a free abelian group 
of rank $(n-1)!$ with basis $b_n\circ \sigma$, $\sigma\in S_{n-1}$, 
where $S_{n-1}\subset S_n$ is the stabilizer of $1\in [1,n]$. 
Thus the restriction of the $S_n$-representation 
${\bf Lie}_n$ to $S_{n-1}$ is the regular representation
of $S_{n-1}$.  

\begin{definition}\label{operadic} An operadic Lie algebra in a symmetric tensor category $\C$  
is an (ind-)object $L\in \C$ equipped with a structure of a ${\bf Lie}$-algebra. 
\end{definition} 

Here we use the adjective ``operadic" 
because Definition \ref{operadic}  
does not coincide with the usual one even if $\C={\rm Vec}$, in the case ${\rm char}(\kk)=2$. Namely, Definition \ref{operadic} only requires that $[x,y]=-[y,x]$, while the standard definition of a Lie algebra requires a stronger condition $[x,x]=0$. 

Likewise, if $\C={\rm Supervec}$ is the category of supervector spaces (i.e., in the case of Lie superalgebras),
Definition \ref{operadic} does not coincide with the standard one also if ${\rm char}(\kk)=3$. 
Namely, if $x\in L$ is an odd element, then the standard definition 
requires that $[[x,x],x]=0$, which in characteristic $3$ does not follow 
from the Jacobi identity 
$$
[[x,y],z]+[[y,z],x]+[[z,x],y]=0.
$$

On the other hand, in the category 
${\rm Supervec}$ Definition \ref{operadic} coincides with the usual one in any 
characteristic different from $2,3$. 

Note that any associative algebra is naturally an operadic Lie algebra 
with operation $b=\mu-\mu^{op}$, where $\mu$ is the multiplication
(since this formula defines a morphism from the Lie operad to the associative operad). 

\subsection{Free operadic Lie algebras} 
Let $V\in \C$. 
Define the {\it free operadic Lie algebra} 
${\rm FOLie}(V)$ by the formula ${\rm FOLie}(V)=\oplus_{n\ge 1}{\rm FOLie}_n(V)$, 
where  
$$
{\rm FOLie}_n(V)=(V^{\otimes n}\otimes {\bf Lie}_n)_{S_n},
$$
and the subscript $S_n$ means coinvariants with respect to $S_n$ (cf. \cite{Fr}). 
This has an obvious bracket making ${\rm FOLie}(V)$ an operadic Lie algebra. 
Moreover, ${\rm FOLie}(V)$ is generated in degree $1$.  

It is clear that ${\rm FOLie}(V)$ has a universal property: 
its Lie (i.e., bracket-preserving) homomorphisms to any operadic Lie 
algebra $L$ are in natural bijection with $\Hom_{\mathcal{C}}(V,L)$. 
In other words, the functor ${\rm FOLie}$ is left adjoint 
to the forgetful functor on the category of operadic Lie algebras in $\C$
(which forgets the bracket). 

In particular, we have a natural Lie algebra map 
$$
\phi^V: {\rm FOLie}(V)\to TV
$$ 
corresponding to the inclusion of the degree $1$ part $V\to TV$
(since the tensor algebra $TV$ is an associative algebra and therefore an operadic Lie algebra).
We have $\phi^V=\oplus_{n\ge 1}\phi_n^V$, where $\phi_n^V: {\rm FOLie}_n(V)\to V^{\otimes n}$. 
Let $E_n(V):={\rm Ker}\phi_n^V$, and $E(V)=\oplus_{n\ge 1}E_n(V)$. Note that 
$E(V)$ is an ideal in ${\rm FOLie}(V)$. Also, 
$E_n(V)=0$ if ${\rm char}(\kk)=0$ or ${\rm char}(\kk)=p>n$ (in particular, $E_1(V)=0$), since this is so in the category of 
Schur functors.   

\begin{lemma}\label{flee} Let $A\in \C$ be an associative algebra, 
and 
$$
\psi: {\rm FOLie}(V)\to A
$$
a Lie homomorphism. Then $\psi|_{E(V)}=0$. 
\end{lemma}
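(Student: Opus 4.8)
The plan is to show that $\psi$ factors through the natural map $\phi^V\colon {\rm FOLie}(V)\to TV$; since $E(V)={\rm Ker}\,\phi^V$ by definition, this immediately yields $\psi|_{E(V)}=0$. The underlying principle is that a Lie homomorphism from a \emph{free} operadic Lie algebra into an associative algebra (equipped with its commutator bracket $b=\mu-\mu^{op}$) can never see more than the image inside the tensor algebra, precisely because the target remembers its associative structure. In other words, every such $\psi$ is of the form (associative extension) $\circ\,\phi^V$.

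Concretely, I would argue as follows. First, restrict $\psi$ to the degree-$1$ part to obtain a morphism $f:=\psi|_V\colon V\to A$ in $\C$. Since $A$ is an associative algebra and $TV$ is the free associative algebra on $V$, the universal property of the tensor algebra produces a unique associative algebra homomorphism $\tilde f\colon TV\to A$ with $\tilde f|_V=f$. Next, observe that $\tilde f$ is automatically a Lie homomorphism for the commutator brackets: this is the functoriality of the commutator construction, i.e. the fact that the operad morphism ${\bf Lie}\to{\bf Assoc}$ sending $b$ to $\mu-\mu^{op}$ (invoked in the text when making $A$ an operadic Lie algebra) is natural, so any associative homomorphism intertwines the induced Lie structures.

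Now consider the two Lie homomorphisms $\psi$ and $\tilde f\circ\phi^V$ from ${\rm FOLie}(V)$ to $A$. On the generating object $V$ they agree: $\phi^V|_V$ is the inclusion $V\hookrightarrow TV$ and $\tilde f|_V=f=\psi|_V$, so $(\tilde f\circ\phi^V)|_V=f$. By the universal property of ${\rm FOLie}(V)$ stated in the excerpt, a Lie homomorphism out of ${\rm FOLie}(V)$ is determined by its restriction to $V$; hence $\psi=\tilde f\circ\phi^V$. Restricting to $E(V)={\rm Ker}\,\phi^V$ gives $\psi|_{E(V)}=\tilde f\circ(\phi^V|_{E(V)})=0$, as desired.

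The argument is essentially formal, so I do not expect a genuine obstacle; the only point requiring care is that all three ingredients (the universal property of ${\rm FOLie}(V)$, the universal property of $TV$, and the naturality of the commutator bracket) are invoked at the level of morphisms in $\C$ rather than for ordinary vector spaces. Each is a morphism-level statement valid in any symmetric tensor category, so no characteristic-$p$ subtlety enters here: the failure of the PBW theorem in positive characteristic lives entirely in the nontriviality of $E(V)$ itself, not in this factorization.
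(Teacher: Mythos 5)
Your proposal is correct and is essentially the paper's own proof, which states in one line that $\psi$ factors through $TV$ and hence kills $E(V)=\Ker\phi^V$; you have simply spelled out the factorization $\psi=\tilde f\circ\phi^V$ via the universal properties of $TV$ and ${\rm FOLie}(V)$ and the naturality of the commutator bracket.
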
 

\begin{proof} The homomorphism $\psi$ factors through $TV$, so 
we may assume that $\psi=\phi^V$, which implies the statement. 
\end{proof} 

\subsection{Computation of $E_n(V)$}\label{compenv}  
Let us give a more explicit realization of 
$E_n(V)$. Recall (\cite{Re}, Theorem 8.16, formula (8.4.2), see also 8.6.5)
that for any $n\ge 2$,  ${\bf Lie}_n=\kk S_n\cdot \theta_n$ 
as an $S_n$-module, where $\theta_n$ is the Dynkin element
$$
\theta_n=({\rm Id}-(21))({\rm Id}-(321))...({\rm Id}-(n...1)),
$$
which satisfies $\theta_n^2=n\theta_n$ 
(the characteristic zero assumption of \cite{Re}, Subsection 8.4
is not needed for this statement). 
Thus, 
$$
{\bf Lie}_n=\kk S_n/{\rm Ann}_l(\theta_n),
$$
where 
${\rm Ann}_l(y):=\lbrace x\in \kk S_n: xy=0\rbrace$ 
is the left annihilator of $y$.
Hence, 
$$
{\rm FOLie}_n(V)=V^{\otimes n}/{\rm Ann}_r(\theta_n^\vee)V^{\otimes n},
$$
where 
$$
\theta_n^\vee=S(\theta_n)=({\rm Id}-(1...n))...({\rm Id}-(123))({\rm Id}-(12))
$$
(where $S$ is the antipode), and 
${\rm Ann}_r(y):=\lbrace x\in \kk S_n: yx=0\rbrace$ 
is the right annihilator of $y$.
Hence, we obtain

\begin{lemma}\label{env}
The map $\phi_n^V: {\rm, FOLie}_n(V)\to V^{\otimes n}$ 
is given by multiplication by $\theta_n^\vee$ on 
$V^{\otimes n}/{\rm Ann}_r(\theta_n^\vee)V^{\otimes n}$.
Thus 
$$
E_n(V)={\rm Ker}(\theta_n^\vee: V^{\otimes n}/{\rm Ann}_r(\theta_n^\vee)V^{\otimes n}\to V^{\otimes n}). 
$$
\end{lemma}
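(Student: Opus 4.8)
The plan is to unwind the definition of ${\rm FOLie}_n(V)$ through the presentation ${\bf Lie}_n = \kk S_n/{\rm Ann}_l(\theta_n)$ recalled just above, and then to identify $\phi_n^V$ with the arity-$n$ component of the operad morphism ${\bf Lie}\to{\bf Assoc}$ coming from the commutator. All the operations involved (the $S_n$-action on $V^{\otimes n}$ induced by the symmetry of $\C$, coinvariants, and relative tensor products with finite-dimensional $\kk S_n$-modules) make sense verbatim in $\C$, so the only nontrivial input beyond linear algebra is the presentation of ${\bf Lie}_n$ due to Reutenauer.

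First I would rewrite the coinvariants as a relative tensor product. Converting the left $\kk S_n$-action on $V^{\otimes n}$ to a right action by the antipode $S$ (that is, $v\cdot g := g^{-1}v$) identifies the diagonal coinvariants $(V^{\otimes n}\otimes {\bf Lie}_n)_{S_n}$ with $V^{\otimes n}\otimes_{\kk S_n}{\bf Lie}_n$. Substituting ${\bf Lie}_n = \kk S_n/{\rm Ann}_l(\theta_n)$ and using $V^{\otimes n}\otimes_{\kk S_n}(\kk S_n/I) = V^{\otimes n}/(V^{\otimes n}\cdot I)$, the key bookkeeping step is that the right action of $I={\rm Ann}_l(\theta_n)$ on $V^{\otimes n}$ is the left action of $S(I)$, and that $S$ turns $x\theta_n=0$ into $\theta_n^\vee S(x)=0$; hence $S({\rm Ann}_l(\theta_n)) = {\rm Ann}_r(\theta_n^\vee)$ and ${\rm FOLie}_n(V) = V^{\otimes n}/{\rm Ann}_r(\theta_n^\vee)V^{\otimes n}$.

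Next I would identify the map. The morphism $\phi_n^V$ is induced by ${\bf Lie}\to{\bf Assoc}$, whose arity-$n$ component is the inclusion of the space of Lie elements $\kk S_n\theta_n$ into ${\bf Assoc}_n=\kk S_n$; under the isomorphism ${\bf Lie}_n\cong\kk S_n/{\rm Ann}_l(\theta_n)$ this is $\bar g\mapsto g\theta_n$. Tensoring with $V^{\otimes n}$ over $\kk S_n$ and using the canonical identification $V^{\otimes n}\otimes_{\kk S_n}\kk S_n = V^{\otimes n}$, which by the antipode twist sends $v\otimes h\mapsto S(h)v$, the class of a representative $w\in V^{\otimes n}$ maps to $\theta_n^\vee w$. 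Thus $\phi_n^V$ is left multiplication by $\theta_n^\vee$, and this is well-defined because ${\rm Ann}_r(\theta_n^\vee)V^{\otimes n}$ is annihilated by $\theta_n^\vee$. The stated formula $E_n(V)=\ker\phi_n^V$ follows at once.

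The hard part will be keeping the left/right and antipode conventions consistent across the two identifications, so that the operad inclusion $\bar g\mapsto g\theta_n$, the antipode-twisted right-module structure on $V^{\otimes n}$, and the identification $V^{\otimes n}\otimes_{\kk S_n}\kk S_n=V^{\otimes n}$ compose to precisely left multiplication by $\theta_n^\vee=S(\theta_n)$, and not by $\theta_n$ or some conjugate. A reassuring check is $n=2$: there $\theta_2^\vee={\rm Id}-(12)$, so the formula gives ${\rm FOLie}_2(V)=V^{\otimes 2}/S^2V=\wedge^2 V$ included into $V^{\otimes 2}$ by antisymmetrization, whence $E_2(V)=0$ away from characteristic $2$, consistent with the vanishing $E_n(V)=0$ for ${\rm char}(\kk)>n$ noted earlier.
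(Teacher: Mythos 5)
Your proposal is correct and follows essentially the same route as the paper: the paper's argument is exactly the chain ${\bf Lie}_n=\kk S_n\cdot\theta_n\cong \kk S_n/{\rm Ann}_l(\theta_n)$, passage to coinvariants against $V^{\otimes n}$, and the antipode bookkeeping converting ${\rm Ann}_l(\theta_n)$ into ${\rm Ann}_r(\theta_n^\vee)$, so that $\phi_n^V$ becomes left multiplication by $\theta_n^\vee$. You merely make explicit the left/right module conventions and the identification of coinvariants with a relative tensor product, which the paper leaves implicit.
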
 

\begin{corollary}\label{chara}
If $n$ is not divisible by $p={\rm char}(\kk)$, then $E_n(V)=0$. 
\end{corollary}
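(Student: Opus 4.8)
The plan is to leverage the quadratic relation satisfied by the Dynkin element together with Lemma \ref{env}. From $\theta_n^2=n\theta_n$ one gets, by applying the antipode $S$ (an anti-automorphism of $\kk S_n$, so $S(\theta_n^2)=S(\theta_n)^2$), the analogous relation $(\theta_n^\vee)^2=n\theta_n^\vee$. When $p\nmid n$ the scalar $n$ is invertible in $\kk$, so I would introduce the element $e:=n^{-1}\theta_n^\vee\in \kk S_n$ and note that $e^2=e$, i.e.\ $e$ is an idempotent, with $\theta_n^\vee=ne$.

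The first key step is to identify the right annihilator explicitly. One computes $\theta_n^\vee(1-e)=\theta_n^\vee-n^{-1}(\theta_n^\vee)^2=0$, so $1-e\in {\rm Ann}_r(\theta_n^\vee)$; conversely $\theta_n^\vee x=0$ is equivalent to $ex=0$, i.e.\ to $x=(1-e)x$, so in fact ${\rm Ann}_r(\theta_n^\vee)=(1-e)\kk S_n$. Since $\C$ is abelian, hence idempotent-complete, the idempotent $e$ acts naturally on the object $V^{\otimes n}$ and splits it as $V^{\otimes n}=eV^{\otimes n}\oplus (1-e)V^{\otimes n}$ in $\C$. Because each $\sigma\in S_n$ acts invertibly, ${\rm Im}((1-e)\circ \sigma)={\rm Im}(1-e)$ for every $\sigma$, whence ${\rm Ann}_r(\theta_n^\vee)V^{\otimes n}=(1-e)\kk S_n\cdot V^{\otimes n}=(1-e)V^{\otimes n}$. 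Thus ${\rm FOLie}_n(V)=V^{\otimes n}/{\rm Ann}_r(\theta_n^\vee)V^{\otimes n}$ is canonically the complementary summand $eV^{\otimes n}$.

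Finally I would read off injectivity of $\phi_n^V$. By Lemma \ref{env} this map is multiplication by $\theta_n^\vee=ne$, which kills $(1-e)V^{\otimes n}$ and acts on $eV^{\otimes n}$ as the invertible scalar $n$ followed by the inclusion $eV^{\otimes n}\hookrightarrow V^{\otimes n}$. Hence $\phi_n^V$ is a split monomorphism and $E_n(V)={\rm Ker}\,\phi_n^V=0$. The argument is entirely formal, so I do not anticipate a real obstacle; the two points demanding care are the transfer of the relation from $\theta_n$ to $\theta_n^\vee$ through the antipode, and the verification that the categorical subobject ${\rm Ann}_r(\theta_n^\vee)V^{\otimes n}$ \emph{equals} the summand $(1-e)V^{\otimes n}$ rather than merely being contained in it—this equality is exactly what upgrades "$\phi_n^V$ has the expected rank" to genuine injectivity.
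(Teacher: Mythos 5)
Your proof is correct and follows essentially the same route as the paper: both hinge on the observation that $\theta_n^\vee/n$ is an idempotent when $p\nmid n$, so that $\phi_n^V$ becomes (split) injective. Your version merely makes explicit the identification ${\rm Ann}_r(\theta_n^\vee)V^{\otimes n}=(1-e)V^{\otimes n}$, which the paper's terser argument does not even need (it only uses the easy containment $(1-e)V^{\otimes n}\subseteq {\rm Ann}_r(\theta_n^\vee)V^{\otimes n}$ to see that the composite of $\phi_n^V$ with the projection back to the quotient is multiplication by $n$).
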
 

\begin{proof} If $n$ is not divisible by $p$ then $\frac{\theta_n^\vee}{n}$ is an idempotent,
so $\theta_n^\vee$ acts on $V^{\otimes n}/{\rm Ann}_r(\theta_n^\vee)V^{\otimes n}$ 
by multiplication by $n$, and hence defines an injection 
$V^{\otimes n}/{\rm Ann}_r(\theta_n^\vee)V^{\otimes n}\to V^{\otimes n}$.  
\end{proof} 

\subsection{Explicit form of $E_n(V)$ for small $n$}\label{small}
Let us now compute $E_2(V)$. We only have to consider  the case ${\rm char}(\kk)=2$. 
In this case we have ${\rm FOLie}_2(V)=S^2V$, 
while its image in $TV$ is $\wedge^2V$
(note that $\wedge^2V=\Lambda^2 V$ in any symmetric tensor category). 
Thus $E_2(V)=V^{(1)}$, the Frobenius twist of $V$, i.e., the kernel of the natural morphism $S^2V\to \wedge^2V$ (see \cite{O}, Section 3). E.g., if $V$ is a usual vector space, then $V^{(1)}$ is the usual Frobenius twist of $V$.  
 
Let us now compute $E_3(V)$. By Corollary \ref{chara}, we only need to consider characteristic $3$. 
In this case, it is easy to see 
that ${\rm Ann}_r(\theta_3^\vee)$ is 4-dimensional and 
equals 
$$
{\rm Ann}_r(\theta_3^\vee)=({\rm Id}+(12))\cdot \kk S_3\oplus \kk {\bold e}_-,
$$ 
where 
${\bold e}_-=\sum_{\sigma\in S_3} {\rm sign}(\sigma)\sigma$. 
The quotient of $V^{\otimes 3}$ by 
$$
({\rm Id}+(12))\cdot \kk S_3\cdot V^{\otimes 3}=S^2V\otimes V
$$ 
is $\Lambda^2V\otimes V$, so 
$$
V^{\otimes 3}/{\rm Ann}_r(\theta_3^\vee)V^{\otimes 3}=\Lambda^2V\otimes V/{\bold e}_-V^{\otimes 3}.
$$
Now, $\theta_3^\vee=2({\rm Id}-c)$ on $\Lambda^2V\otimes V$, where $c:=(123)$
so 
$$
{\rm Ker}(\theta_3^\vee|_{\Lambda^2V\otimes V/{\bold e}_-V^{\otimes 3}})
={\rm Ker}(({\rm Id}-c)|_{\Lambda^2V\otimes V/{\bold e}_-V^{\otimes 3}})\subset 
{\rm Ker}(({\rm Id}-c)|_{V^{\otimes 3}/{\bold e}_-V^{\otimes 3}}).
$$
The permutation $c$ maps $\Lambda^2V\otimes V$ onto $V\otimes \Lambda^2V$ 
inside $V^{\otimes 3}$, while it acts trivially on 
${\rm Ker}(({\rm Id}-c)|_{\Lambda^2V\otimes V/{\bold e}_-V^{\otimes 3}})$,
hence 
$$
{\rm Ker}(({\rm Id}-c)|_{\Lambda^2V\otimes V/{\bold e}_-V^{\otimes 3}})=
{\rm Ker}(({\rm Id}-c)|_{V\otimes \Lambda^2V/{\bold e}_-V^{\otimes 3}})
$$
inside ${\rm Ker}(({\rm Id}-c)|_{V^{\otimes 3}/{\bold e}_-V^{\otimes 3}})$. 
Since $\Lambda^2V\otimes V\cap V\otimes \Lambda^2V=\Lambda^3V$, this implies that
$$
{\rm Ker}(({\rm Id}-c)|_{\Lambda^2V\otimes V/{\bold e}_-V^{\otimes 3}})=
{\rm Ker}(({\rm Id}-c)|_{\Lambda^3V/{\bold e}_-V^{\otimes 3}}).
$$
But $c$ acts trivially on $\Lambda^3V$, so 
$$
{\rm Ker}(({\rm Id}-c)|_{\Lambda^2V\otimes V/{\bold e}_-V^{\otimes 3}})=\Lambda^3V/{\bold e}_-V^{\otimes 3}.
$$
Thus, by Lemma \ref{env}
\begin{equation}\label{e3v}
E_3(V)=\Lambda^3V/{\bold e}_-V^{\otimes 3}.
\end{equation}

Let us now compute $E_3(V)$ directly in the case when $\C={\rm Supervec}$. 
Choosing a basis of $V$, we see that ${\rm FOLie}_3(V)$ 
is a direct sum of spaces of two kinds: 
$(X\otimes {\bf Lie}_3)_{S_3}$, where $X$ is the 3-dimensional space spanned by  
the permutations of $v\otimes x\otimes x$, where $v,x$ are either odd or even, 
and $(Y\otimes {\bf Lie}_3)_{S_3}$, where $Y$ is spanned by $y\otimes y\otimes y$, where $y$ is either odd or even. 

In the first case, $X=\kk S_3\otimes_{\kk S_2}\kk_\pm$, 
where $\kk_+$ is the trivial representation, $\kk_-$ the sign representation, and 
the sign depends on the parity of $x$. Thus, $(X\otimes {\bf Lie}_3)_{S_3}=
({\bf Lie}_3\otimes \kk_{\pm})^{S_2}$.
Recall that ${\bf Lie}_3(\kk)$ is the regular representation of $S_2$, 
so we get that $(X\otimes {\bf Lie}_3)_{S_3}$ is $1$-dimensional, 
spanned by $[[v,x],x]$. This element maps to a nonzero element 
of $V^{\otimes 3}$. Thus, this case does not contribute to $E_3(V)$. 

In the second case, $(Y\otimes {\bf Lie}_3)_{S_3}=({\bf Lie}_3\otimes \kk_\pm)_{S_3}$,
where the sign is $+$ if $y$ is even and $-$ if $y$ is odd.  The $S_3$-representation 
${\bf Lie}_3(\kk)$ is a nontrivial extension 
of the sign representation $\kk_-$ by the trivial representation $\kk_+$:
\begin{equation}\label{nontex}
0\to \kk_+\to {\bf Lie}_3(\kk)\to \kk_-\to 0. 
\end{equation}  
Thus, $(Y\otimes {\bf Lie}_3)_{S_3}$ is zero if $y$ is even and 
1-dimensional spanned by $[[y,y],y]$ if $y$ is odd. In the latter case, 
the basis vector maps to zero in $V^{\otimes 3}$, hence contributes 
to $E_3(V)$. 

Altogether, we see that $E_3(V)=V_{\rm odd}^{(1)}$, 
the Frobenius twist of the odd part of $V$ (so, it is 
a purely odd supervector space). Obviously, this coincides with \eqref{e3v}.  

\subsection{Additivity of $E_p(V)$ in characteristic $p$}

Let ${\rm char}(\kk)=p$. 

\begin{proposition}\label{addit} 
We have $E_p(V\oplus W)=E_p(V)\oplus E_p(W)$. 
\end{proposition}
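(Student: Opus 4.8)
The plan is to decompose both sides along the bigrading of $(V\oplus W)^{\otimes p}$ by the number of tensor factors equal to $V$, and to show that only the two pure bidegrees survive. First I would record that $\phi_p$ arises by applying an exact construction to an injection of $S_p$-modules (cf. Lemma \ref{env}). Indeed, the map $\phi_p^U:{\rm FOLie}_p(U)=(U^{\otimes p}\otimes{\bf Lie}_p)_{S_p}\to U^{\otimes p}$ is the $S_p$-coinvariants of $\id_{U^{\otimes p}}\otimes\jmath$, where $\jmath:{\bf Lie}_p\hookrightarrow\kk S_p={\bf Ass}_p$ is the degree $p$ component of the canonical operad morphism ${\bf Lie}\to{\bf Ass}$ (the expansion of iterated brackets into permutations), and we use $(U^{\otimes p}\otimes\kk S_p)_{S_p}\cong U^{\otimes p}$. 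The map $\jmath$ is injective over any field, since it is the inclusion of the multilinear component of the free Lie algebra into that of the tensor algebra, both of dimension $(p-1)!$ (Hall basis). Thus $E_p(U)=\Ker\big((\id\otimes\jmath)_{S_p}\big)$, and the failure of $\phi_p$ to be injective is precisely the failure of $(-)_{S_p}$ to be exact.

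Next I would decompose. Writing $M_k\subseteq(V\oplus W)^{\otimes p}$ for the sum of all words with exactly $k$ factors equal to $V$, we have $M_k\cong\Ind_H^{S_p}(V^{\otimes k}\otimes W^{\otimes(p-k)})$ as $S_p$-objects, where $H:=S_k\times S_{p-k}$; this is an $S_p$-stable decomposition since permutations preserve the number of $V$-factors, and $\phi_p$ respects it because bracketing is additive in the bidegree. Applying the projection formula $(\Ind_H^{S_p}X\otimes P)_{S_p}\cong(X\otimes{\rm Res}_HP)_H$ to $P={\bf Lie}_p$ and to $P=\kk S_p$ turns $\phi_p$, in bidegree $(k,p-k)$, into the $H$-coinvariants of $\id_{V^{\otimes k}\otimes W^{\otimes(p-k)}}\otimes{\rm Res}_H\jmath$. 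Hence $E_p(V\oplus W)=\bigoplus_{k=0}^pE_p^{(k)}$, with $E_p^{(p)}=E_p(V)$ and $E_p^{(0)}=E_p(W)$ (the pure bidegrees, where $H=S_p$). It then remains to show $E_p^{(k)}=0$ for $0<k<p$.

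Here the hypothesis that the degree equals $p$ enters decisively: for $0<k<p$ the order $|H|=k!\,(p-k)!$ is prime to $p$, so $\kk H$ is semisimple, the averaging idempotent $\tfrac1{|H|}\sum_{h\in H}h$ exists, and $(-)_H$ is exact. Since tensoring by the fixed object $V^{\otimes k}\otimes W^{\otimes(p-k)}$ is also exact in $\C$ and ${\rm Res}_H\jmath$ is injective, the bidegree $(k,p-k)$ component of $\phi_p$ is injective, i.e. $E_p^{(k)}=0$. This yields $E_p(V\oplus W)=E_p(V)\oplus E_p(W)$.

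The main obstacle is conceptual rather than computational: one must locate the non-exactness responsible for a nonzero $E_p$ and see that it is confined to the pure bidegrees. It lives exactly where the stabilizer is $S_p$, whose order is divisible by $p$; in every mixed bidegree the stabilizer $S_k\times S_{p-k}$ $(0<k<p)$ has order prime to $p$, a feature special to degree $p$ that fails for larger multiples of $p$. Beyond this point the work is routine, namely checking the projection formula and the exactness of $(-)_H$ and of $\otimes$ in a general symmetric tensor category, all of which are formal.
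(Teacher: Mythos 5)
Your proof is correct and follows essentially the same route as the paper: decompose $(V\oplus W)^{\otimes p}$ by the number of $V$-factors, use Frobenius reciprocity (the projection formula) to rewrite each mixed component as coinvariants under $S_k\times S_{p-k}$, and conclude from the injectivity of ${\bf Lie}_p\to{\bf Assoc}_p$ together with the exactness of coinvariants for a group of order prime to $p$. The only difference is presentational: you make explicit the observation that $\phi_p$ is obtained by applying $(-)_{S_p}$ to an injection, which the paper leaves implicit.
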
 

\begin{proof} Let ${\bf Assoc}=\oplus_{n\ge 1}{\bf Assoc}_n$ 
be the associative operad. Thus, ${\bf Assoc}_n=\Bbb ZS_n$. 
We have $E_p(V\oplus W)=E_p(V)\oplus E_p(W)\oplus \oplus_{i=1}^{p-1}E_{p,i}(V,W)$, where 
$E_{p,i}(V,W)$ is the kernel of the natural map 
$$
({\rm Ind}_{S_i\times S_{p-i}}^{S_p}(V^{\otimes i}\otimes W^{p-i})\otimes {\bf Lie}_p)_{S_p}\to 
({\rm Ind}_{S_i\times S_{p-i}}^{S_p}(V^{\otimes i}\otimes W^{p-i})\otimes {\bf Assoc}_p)_{S_p}. 
$$
Using Frobenius reciprocity, this can be rewritten as a map 
$$
\kappa: (V^{\otimes i}\otimes W^{p-i}\otimes {\bf Lie}_p)_{S_i\times S_{p-i}}\to
(V^{\otimes i}\otimes W^{p-i}\otimes {\bf Assoc}_p)_{S_i\times S_{p-i}}.
$$
But the natural map ${\bf Lie}_n\to {\bf Assoc}_n$ is injective, while 
the order of the group $S_i\times S_{p-i}$ is prime to $p$, hence the functor of coinvariants 
under this group is exact. Thus, $\kappa$ is injective, and $E_{p,i}(V,W)=0$, as desired. 
\end{proof} 

\subsection{Lie algebras} 
Now let $L$ be an operadic Lie algebra in $\C$. Then we have a natural  
morphism $\beta^L: {\rm FOLie}(L)\to L$. 

\begin{definition}
An operadic Lie algebra $L$ is called a Lie algebra if 
$\beta^L|_{E(L)}=0$. 
\end{definition} 

\begin{proposition}\label{anyasso}
Any associative algebra or its subobject closed under bracket is a Lie algebra.  
\end{proposition}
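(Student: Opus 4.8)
The plan is to show that for an associative algebra $A$ (or a subobject closed under bracket), the natural map $\beta^A: {\rm FOLie}(A)\to A$ annihilates $E(A)$. The key observation is that $\beta^A$ is a Lie homomorphism, so it is natural to reduce to Lemma \ref{flee}, which says precisely that any Lie homomorphism from ${\rm FOLie}(V)$ into an associative algebra kills $E(V)$ — with $V=A$.

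First I would unwind the definitions. The morphism $\beta^L: {\rm FOLie}(L)\to L$ is, by the universal property of ${\rm FOLie}$, the Lie homomorphism corresponding to the identity morphism $\id_L: L\to L$ (viewing $L$ as the generating object in degree $1$). When $L=A$ is associative, $A$ is itself an associative algebra, hence an operadic Lie algebra via the bracket $b=\mu-\mu^{\op}$. Then $\beta^A: {\rm FOLie}(A)\to A$ is a Lie homomorphism into the associative algebra $A$, so Lemma \ref{flee} (applied with $V=A$ and $\psi=\beta^A$) gives $\beta^A|_{E(A)}=0$ immediately. Thus $A$ is a Lie algebra.

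For the case of a subobject $L\subseteq A$ closed under the bracket, I would argue as follows. The inclusion $\iota: L\hookrightarrow A$ is a morphism of operadic Lie algebras, since $L$ is closed under bracket and inherits its bracket from $A$. Therefore the composite $\iota\circ\beta^L: {\rm FOLie}(L)\to A$ is a Lie homomorphism into the associative algebra $A$, so by Lemma \ref{flee} it annihilates $E(L)$. Since $\iota$ is a monomorphism and $\iota\circ\beta^L|_{E(L)}=0$, we conclude $\beta^L|_{E(L)}=0$, as desired.

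The only point requiring a little care — and the main (mild) obstacle — is checking that $\iota\circ\beta^L$ coincides, under the universal property, with a genuine Lie homomorphism from ${\rm FOLie}(L)$ into the \emph{associative} algebra $A$ in the sense required by Lemma \ref{flee}; that is, that it factors through $\phi^L$ after composing with the natural map into $TA$. This is a formality: both $\beta^L$ and $\iota$ are bracket-preserving, so their composite is, and the lemma applies verbatim with $V=L$. No genuine computation is needed beyond this naturality bookkeeping.
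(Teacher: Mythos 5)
Your proposal is correct and follows essentially the same route as the paper: compose the inclusion $i:L\to A$ with $\beta^L$ to get a Lie homomorphism ${\rm FOLie}(L)\to A$ into an associative algebra, apply Lemma \ref{flee} to conclude it kills $E(L)$, and use that $i$ is a monomorphism to deduce $\beta^L|_{E(L)}=0$. The paper treats the associative algebra itself as the special case $L=A$ of the subobject argument, exactly as you do.
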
 

\begin{proof}
Suppose $A\in \C$ is an associative algebra, and $L\subset A$ is closed under bracket. 
Let $i: L\to A$ be the inclusion map. Then $\theta=i\circ \beta^L: {\rm FOLie}(L)\to A$
is a Lie algebra map, and it suffices to check that $\theta|_{E(L)}=0$. 
But this is guaranteed by Lemma \ref{flee}.
\end{proof} 

\begin{example} 
1. Let $A$ be any algebraic structure in a symmetric tensor category $\C$,
i.e., a commutative algebra, associative algebra, Hopf algebra, etc. More precisely, 
let ${\mathcal{P}}$ be a PROP over $\kk$ (cf. \cite{M}), and $A$ be a ${\mathcal{P}}$-module in $\C$. 
Let $L:={\underline{\rm Der}}(A)\subset \underline{\rm Hom}(A,A)$ 
be the Lie subalgebra of {\it internal derivations} of $A$, i.e., internal endomorphisms 
that preserve the operations $A^{\otimes n}\to A^{\otimes m}$ coming from ${\mathcal{P}}(n,m)$
for all $n,m$. Then by Proposition \ref{anyasso}, $L$ is a Lie algebra.

2. Let $G$ be an affine group scheme in $\C$, i.e., we have a commutative Hopf (ind-)algebra $H=O(G)$ in $\C$. 
Let ${\mathfrak{m}}$ be the augmentation ideal in $H$. As in \cite{E}, Subsection 2.2, let ${\mathfrak{g}}={\rm Lie}(G):=
({\mathfrak{m}}/{\mathfrak{m}}^2)^*$ (in general, a pro-object of $\C$). 
Then by Proposition \ref{anyasso}, ${\mathfrak{g}}$ is a Lie algebra. 
Indeed, ${\mathfrak{g}}$ is contained in the algebra $H^*$ (as the kernel of the augmentation on $({\mathfrak{m}}^2)^\perp$) 
and is closed under bracket. One can also interpret ${\mathfrak{g}}$ as the Lie algebra of left-invariant derivations 
of the commutative algebra $O(G)$.  
\end{example}

\begin{corollary}\label{extendsuni} (i) The quotient ${\rm FLie}(V):={\rm FOLie}(V)/E(V)$
is a Lie algebra. 

(ii) If $L\in \C$ is a Lie algebra then any $\C$-morphism 
$V\to L$ extends uniquely to a Lie algebra morphism ${\rm FLie}(V)\to L$. 
In other words, the functor ${\rm FLie}$ is left adjoint 
to the forgetful functor on the category of Lie algebras in $\C$.
\end{corollary}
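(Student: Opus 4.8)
The plan is to reduce both parts to the universal property of the free operadic Lie algebra, the functoriality of the construction $V\mapsto E(V)$, and Proposition \ref{anyasso}.

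For part (i), I would observe that since $E(V)=\Ker\phi^V$, the quotient ${\rm FLie}(V)={\rm FOLie}(V)/E(V)$ is canonically isomorphic, as an operadic Lie algebra, to the image $\Image\phi^V\subseteq TV$ (the coimage and the image of $\phi^V$ coincide in the abelian category $\C$, and the canonical isomorphism respects the bracket because $\phi^V$ is a Lie homomorphism). As the image of a Lie homomorphism into the associative algebra $TV$, this subobject is closed under the bracket; hence by Proposition \ref{anyasso} it is a Lie algebra, and therefore so is ${\rm FLie}(V)$.

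For part (ii), let $L$ be a Lie algebra and $f\colon V\to L$ a $\C$-morphism. By the universal property of ${\rm FOLie}(V)$, the map $f$ extends uniquely to an operadic Lie morphism $\tilde f\colon {\rm FOLie}(V)\to L$, and the task is to show that $\tilde f$ factors through the quotient ${\rm FLie}(V)$, i.e. that $\tilde f|_{E(V)}=0$. The key step is to identify $\tilde f=\beta^L\circ {\rm FOLie}(f)$, where ${\rm FOLie}(f)\colon {\rm FOLie}(V)\to {\rm FOLie}(L)$ is the functorial map (given in degree $n$ by $(f^{\otimes n}\otimes 1_{{\bf Lie}_n})_{S_n}$): both sides are operadic Lie morphisms whose restriction to the degree-$1$ part $V$ equals $f$, using that $\beta^L$ restricts to $\id_L$ on the generating copy of $L$, so they agree by uniqueness in the universal property.

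I would then invoke functoriality of $E$: since $\phi$ is a natural transformation ${\rm FOLie}\Rightarrow T$ into operadic Lie algebras, the square relating $\phi^V,\phi^L$ and ${\rm FOLie}(f),Tf$ commutes, whence ${\rm FOLie}(f)$ carries $E(V)=\Ker\phi^V$ into $E(L)=\Ker\phi^L$. Composing, $\tilde f|_{E(V)}=\beta^L\circ{\rm FOLie}(f)|_{E(V)}$ factors through $\beta^L|_{E(L)}$, which vanishes because $L$ is a Lie algebra; thus $\tilde f|_{E(V)}=0$ and $\tilde f$ descends to a Lie morphism $\bar f\colon {\rm FLie}(V)\to L$ extending $f$. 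Uniqueness of $\bar f$ follows because ${\rm FLie}(V)$, as a quotient of ${\rm FOLie}(V)$, is generated in degree $1$, so any bracket-preserving morphism out of it is determined by its restriction to $V$; the resulting bijection $\Hom_\C(V,L)\cong\Hom_{\rm Lie}({\rm FLie}(V),L)$ is manifestly natural, giving the adjunction. The only nonroutine point — the main thing to verify carefully — is the identity $\tilde f=\beta^L\circ{\rm FOLie}(f)$ together with the naturality of $\phi$; everything else is formal, so I expect these to follow directly once one confirms that $\beta^L$ restricts to the identity on $L$ and that $\phi\colon{\rm FOLie}\Rightarrow T$ is a natural transformation of functors valued in operadic Lie algebras.
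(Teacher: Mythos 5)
Your proposal is correct and follows essentially the same route as the paper: part (i) identifies ${\rm FLie}(V)$ with a bracket-closed subobject of $TV$ and invokes Proposition \ref{anyasso}, and part (ii) uses the factorization $\tilde f=\beta^L\circ{\rm FOLie}(f)$ together with the naturality of $\phi$ to show ${\rm FOLie}(f)$ maps $E(V)$ into $E(L)$, exactly as in the paper's proof. Your added remarks on coimage-equals-image and on uniqueness via generation in degree $1$ are fine elaborations of points the paper leaves implicit.
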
 

\begin{proof} (i) By definition, ${\rm FLie}(V)$ is a subobject of $TV$ 
closed under bracket, which implies (i) by Proposition \ref{anyasso}. 

(ii) Given a $\C$-morphism 
$\eta: V\to L$, use the universal property of ${\rm FOLie}(V)$ 
to extend it to a morphism 
$\xi: {\rm FOLie}(V)\to L$. Clearly, $\xi=\beta^L\circ {\rm FOLie}(\eta)$.
Also $\phi^L\circ {\rm FOLie}(\eta)|_{E(V)}=T(\eta)\circ \phi^V|_{E(V)}=0$. 
Hence, ${\rm FOLie}(\eta)$ maps $E(V)$ to $E(L)$. 
Thus, $\xi|_{E(V)}=0$, which implies (ii).   
\end{proof} 

\subsection{The PBW theorem in the Koszul case}

Let $L\in \C$ be an operadic Lie algebra. Define the {\it universal enveloping algebra} 
$U(L)$ to be the quotient of the tensor algebra $TL$ by the two-sided 
ideal generated by the image of the map ${\rm Id}-(12)-b$ from 
$L\otimes L$ to $TL$, where $b: L\otimes L\to L$ is the commutator. 
Thus the natural map $L\to U(L)$ is bracket-preserving. 

As classically, the algebra $U(L)$ is determined by its universal property: 
for any associative algebra $A$ in ${\mathcal{C}}$, 
homomorphisms of associative algebras $U(L)\to A$ 
bijectively correspond to Lie homomorphisms $L\to A$. 
In particular, by Corollary \ref{extendsuni}, we have canonical isomorphisms 
$U({\rm FOLie}(V))\cong U({\rm FLie}(V))\cong TV$. 

Recall that $SL$ is the quotient of $TL$ by the image of 
${\rm Id}-(12)$. Thus, $U(L)$ carries a natural ascending filtration (defined by the condition $\deg(L)=1$), and we have a natural surjective algebra morphism 
$$
\eta: SL\to {\rm gr}U(L).
$$ 

\begin{definition} We say that $L$ is PBW
if $\eta$ is an isomorphism (i.e., the PBW theorem holds for $L$). 
\end{definition} 

Note that if $L$ is PBW then the natural map $L\to U(L)$ is injective, so 
by  Proposition \ref{anyasso}, $L$ is a Lie algebra. 

\begin{theorem}\label{PBWt} Suppose $L$ is a Koszul object of $\C$. 
Then the following conditions are equivalent. 

(1) $L$ is PBW;

(2) $L$ is a Lie algebra; 

(3) $\beta^L|_{E_n(L)}=0$ for $n=2$ and $n=3$. 

In particular, if ${\rm char}(\kk)$ is $0$ or $p\ge 5$, 
$L$ is always a Lie algebra and satisfies the PBW theorem. 
\end{theorem}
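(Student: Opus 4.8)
The plan is to prove the cycle of implications by reducing the PBW property to the flatness of a graded deformation of $SL$ and then invoking the Koszul deformation principle, so that the only genuinely new input is a degree $2$ and degree $3$ analysis. First I would dispose of the two easy implications. The implication $(1)\Rightarrow(2)$ is the remark preceding the theorem: if $\eta$ is an isomorphism then $L\to U(L)$ is injective, so $L$ is a subobject of an associative algebra closed under the bracket, hence a Lie algebra by Proposition \ref{anyasso}. The implication $(2)\Rightarrow(3)$ is immediate, since $E_2(L),E_3(L)\subseteq E(L)$, so $\beta^L|_{E(L)}=0$ forces $\beta^L|_{E_n(L)}=0$ for $n=2,3$. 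Thus all the content lies in $(3)\Rightarrow(1)$.

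For $(3)\Rightarrow(1)$ I would present $U(L)$ as a filtered deformation of $SL$. Writing $R:={\rm im}({\rm Id}-(12))\subseteq L\otimes L$, so that $SL=TL/\langle R\rangle$, the enveloping algebra $U(L)=TL/\langle {\rm im}({\rm Id}-(12)-b)\rangle$ has the same degree $2$ leading relations $R$, but with the lower-order correction $-b$. Forming the associated Rees/homogenized family over $\kk[[\hbar]]$, exactly as in the proof of Corollary \ref{koszulfil}, yields a graded quadratic deformation of $SL$ whose flatness is equivalent to the PBW property (that $\eta:SL\to{\rm gr}\,U(L)$ be an isomorphism). Since $L$ is a Koszul object, $SL$ is a Koszul algebra by Proposition \ref{koszobj}, so the Koszul deformation principle (Theorem \ref{KdP}) reduces the flatness of this deformation to its flatness in degrees $\le 3$.

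The technical heart is to match the degree $2$ and degree $3$ flatness conditions with the vanishing of $\beta^L$ on $E_2(L)$ and $E_3(L)$. Degree $1$ is automatic. In degree $2$, flatness can fail only if the deformed relation subobject acquires a component purely in lower degree (the relations ``collapse''); I would identify this collapse subobject with $E_2(L)\subseteq S^2L={\rm FOLie}_2(L)$ and its image under $\beta^L$ with the obstruction, so that flatness in degree $2$ is equivalent to $\beta^L|_{E_2(L)}=0$ (the categorical form of $[x,x]=0$, relevant only in characteristic $2$, cf. Subsection \ref{small}). In degree $3$ the obstruction is the Jacobi-type consistency of the bracket on $(R\otimes L)\cap(L\otimes R)$; here the ordinary Jacobi identity holds automatically since $L$ is an algebra over ${\bf Lie}$, and using the explicit description $E_3(L)=\Lambda^3 L/{\bold e}_- L^{\otimes 3}$ from \eqref{e3v} together with Lemma \ref{env} I would identify the remaining part of this obstruction with $\beta^L|_{E_3(L)}$ (the categorical form of $[[x,x],x]=0$, relevant only in characteristic $3$). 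Granting both vanishings, the deformation is flat in degrees $\le 3$, and Theorem \ref{KdP} upgrades this to flatness in all degrees, i.e. $(1)$.

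Finally, the ``in particular'' clause is immediate once $(3)$ is identified as the only obstruction: by Corollary \ref{chara} we have $E_n(L)=0$ whenever $p\nmid n$, and all $E_n(L)=0$ in characteristic $0$; hence for ${\rm char}(\kk)=0$ or $p\ge 5$ both $E_2(L)$ and $E_3(L)$ vanish, so $(3)$ holds vacuously and all three conditions follow. I expect the main obstacle to be the third step: pinning down the precise dictionary between the low-degree flatness obstructions of the deformation and the maps $\beta^L|_{E_2(L)}$, $\beta^L|_{E_3(L)}$, since this is where the categorical bookkeeping (Frobenius twists and the objects $S^2L$, $\Lambda^3 L$, ${\bold e}_- L^{\otimes 3}$) must be reconciled, in an arbitrary symmetric tensor category, with the classical Braverman--Gaitsgory/Positselski consistency conditions for an inhomogeneous quadratic algebra whose quadratic part is Koszul.
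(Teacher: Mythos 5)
Your proposal follows essentially the same route as the paper: both easy implications are handled identically, and for $(3)\Rightarrow(1)$ the paper likewise homogenizes $U(L)$ into a graded quadratic deformation of $S(L\oplus\bold 1)$, invokes Theorem \ref{KdP} to reduce to degrees $\le 3$, and identifies the degree $2$ and $3$ obstructions with $\beta^L|_{E_2(L)}$ and $\beta^L|_{E_3(L)}$ (the latter via a map $\tau^L:\Lambda^3L\to{\rm FOLie}_3(L)$ shown to land in $E_3(L)$ because $\theta_3^\vee$ kills $\Lambda^3L$). The step you flag as the main obstacle --- the dictionary between the Positselski-style degree $3$ consistency condition and $\beta^L|_{E_3(L)}$ --- is exactly the computation the paper carries out, so your outline is a faithful blueprint of its proof.
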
 

\begin{proof} We have just seen that (1) implies (2). 
It is clear that (2) implies (3), so we just need to show that (3) implies (1).

To do so, let $\widehat{L}:=L\oplus \bold 1$, and introduce the 
formal universal enveloping algebra $U_\hbar(L)$ over $\kk[[\hbar]]$ 
to be the quotient of $T\widehat{L}[[\hbar]]$
by the ideal $I$ generated by the sum $R$ of the images of the morphisms
$\alpha: L\otimes \bold 1\to T\widehat{L}[[\hbar]]$ and $\gamma: 
L\otimes L\to T\widehat{L}[[\hbar]]$ given by 
$$
\alpha={\rm Id}-(12),\ \gamma={\rm Id}-(12)-\hbar r_L\circ b,
$$
where $r_L: L\to L\otimes \bold 1$ is the unit morphism. 
Clearly, $U_\hbar(L)/(\hbar)=S(L\oplus \bold 1)$, so 
$U_\hbar(L)$ is a quadratic deformation of $S(L\oplus \bold 1)$, and 
the PBW property of $L$ is equivalent to flatness of this deformation. 
So our job is to show that $U_\hbar(L)$ is flat. 
Also note that the object $L\oplus \bold 1$ is Koszul, since so is $L$. 

Flatness in degree $1$ is obvious. 
If $\beta^L|_{E_2(L)}=0$ then $\gamma$ factors through 
$\wedge^2L=\Lambda^2V$, which implies that $U_\hbar(L)$ is flat in degree $2$. 

Now consider degree $3$. We have $I[3]=L\otimes R+R\otimes L$.
Recall that $R/\hbar R=\wedge^2L=\Lambda^2L$. Thus, flatness of $U_\hbar(L)$ 
in degree $3$ is equivalent to the property that 
$(L\otimes R\cap R\otimes L)/(\hbar)$ is $L\otimes \Lambda^2L\cap \Lambda^2L\otimes L=\Lambda^3L$, 
the $S_3$-invariants in $L^{\otimes 3}$. It is well known (see \cite{PP}, Chapter 5) that the obstruction to this property 
is a certain ``Jacobi'' map $J: \Lambda^3L\to L$, which must vanish 
for the deformation to be flat modulo $\hbar^3$. 
It is easy to see that in all characteristics but $3$, this map is automatically zero, 
so let us consider characteristic $3$. 
In this case, note that by \eqref{nontex}, for any $V\in \C$, 
$(\Lambda^3V\otimes {\bf Lie}_3)_{S_3}=\Lambda^3V$. So 
we have a natural map $\tau^V: \Lambda^3V\to (V^{\otimes 3}\otimes {\bf Lie}_3)_{S_3}$ induced by the inclusion $\Lambda^3V\to V^{\otimes 3}$, 
and we have $J=\beta^L\circ \tau^L$. Since $\beta^L|_{E_3(L)}=0$, 
to establish flatness in degree $3$, 
it suffices to show that $\tau^V(\Lambda^3V)\subset E_3(V)$
for any $V\in \C$. 
 
To this end, note that $\phi^V\circ \tau^V=\theta_3^\vee=({\rm Id}-(123))({\rm Id}-(12))$. But the morphism 
${\rm Id}-(12)$ multiplies by $2=-1$ in $\Lambda^3V$, while ${\rm Id}-(123)$ acts on $\Lambda^3V$ by zero. 
Thus, $\phi^V\circ \tau^V=0$, hence $\tau^V$ lands in $E_3(V)$, as desired. So $U_\hbar(L)$ is flat in degree $3$. 

Now by the Koszul deformation principle (Theorem \ref{KdP}), $U_\hbar(L)$ is flat in all degrees.
This proves the theorem.   
\end{proof} 

This implies the usual PBW theorem for Lie algebras and superalgebras. Namely, we obtain the following 
(well known) result (cf. \cite{Mu}).  

\begin{corollary}\label{PBWt1} Suppose that $\C$ is super-Tannakian, i.e., admits a tensor functor $F: \C\to {\rm Supervec}$ (or $F: \C\to {\rm Vec}$ 
if ${\rm char}(\kk)=2$). Let $L\in \C$ be an operadic Lie algebra. Then the following conditions are equivalent: 

(1) $L$ is PBW;

(2) $L$ is a Lie algebra; 

(3) For any $x\in F(L)$, $[x,x]=0$ if ${\rm char}(\kk)=2$, and for any odd $x\in F(L)$, $[[x,x],x]=0$ 
if ${\rm char}(\kk)=3$. 

In particular, $L$ is always a Lie algebra and satisfies the PBW theorem 
in characteristic $0$ or $p\ge 5$.  
\end{corollary}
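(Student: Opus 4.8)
The plan is to deduce this from Theorem \ref{PBWt} by transporting everything along the fiber functor $F$. The crucial observation is that, although $L$ itself need not be a Koszul object of $\C$, its image $F(L)$ is an operadic Lie algebra in ${\rm Supervec}$ (or in ${\rm Vec}$ when ${\rm char}(\kk)=2$), a category in which \emph{every} object is Koszul; so Theorem \ref{PBWt} applies to $F(L)$, and it remains only to move the three conditions back and forth across $F$.

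First I would record the functoriality input. A symmetric tensor functor is faithful and exact, hence reflects isomorphisms, and a faithful functor detects vanishing of morphisms (a morphism is zero iff its image is). Moreover $F$ commutes with all the constructions used above: tensor powers, the functors $S^\bullet$, $\wedge^\bullet$, $\Lambda^\bullet$, the operadic functor ${\rm FOLie}$ and the maps $\phi^V$, $\beta^L$, the enveloping algebra $U(\cdot)$, and, by exactness, the passage to the associated graded. Consequently $F(E_n(L))=E_n(F(L))$, $F(\beta^L)=\beta^{F(L)}$, and $F$ carries the PBW comparison map $\eta:SL\to {\rm gr}\,U(L)$ to the corresponding map for $F(L)$. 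From reflection of isomorphisms and detection of vanishing it then follows that $L$ is PBW iff $F(L)$ is, that $L$ is a Lie algebra iff $F(L)$ is, and that $\beta^L|_{E_n(L)}=0$ iff $\beta^{F(L)}|_{E_n(F(L))}=0$ for each $n$.

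Next I would apply Theorem \ref{PBWt} to $F(L)$, which is legitimate precisely because every object of ${\rm Supervec}$ and ${\rm Vec}$ is Koszul. This yields the equivalence of ``$F(L)$ is PBW'', ``$F(L)$ is a Lie algebra'', and ``$\beta^{F(L)}|_{E_n(F(L))}=0$ for $n=2,3$''. Combined with the preceding paragraph, conditions (1) and (2) for $L$ are settled, and it remains to identify the abstract vanishing $\beta^{F(L)}|_{E_n(F(L))}=0$ ($n=2,3$) with the explicit relations in (3). Here I would invoke the computations of Subsection \ref{small}: in characteristic $2$ one has $E_2(W)=W^{(1)}$ sitting inside $S^2W$ as the Frobenius squares, on which $\beta$ acts by $x\mapsto [x,x]$, while $E_3(W)=0$ by Corollary \ref{chara}; in characteristic $3$ one has $E_3(W)=W_{\rm odd}^{(1)}$, on which $\beta$ acts by $x\mapsto [[x,x],x]$ for odd $x$, while $E_2(W)=0$. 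Taking $W=F(L)$ and quantifying over $x\in F(L)$ turns the functorial conditions into exactly the stated element-wise identities. Finally, when ${\rm char}(\kk)$ is $0$ or $p\ge 5$, both $E_2$ and $E_3$ vanish, so condition (3) is vacuous and $L$ is automatically PBW.

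All of these steps are routine once Theorem \ref{PBWt} is available; the one point requiring genuine care is the last, namely translating the functorial vanishing $\beta^{F(L)}|_{E_n(F(L))}=0$ into the pointwise identities $[x,x]=0$ and $[[x,x],x]=0$. This rests on the explicit identification of $E_2$ and $E_3$ with Frobenius twists in ${\rm Supervec}$ and on the verification that $\beta$ restricted to these twists is precisely the squaring and triple-bracket map, which is exactly the mechanism by which the non-polylinear (hence non-operadic) relations reappear in characteristics $2$ and $3$.
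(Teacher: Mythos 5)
Your proof is correct, but it takes a slightly different route from the paper's. The paper's argument is shorter: it observes that since $\C$ is super-Tannakian, $L$ is \emph{itself} a Koszul object of $\C$ (the fiber functor $F$ is exact, faithful, and symmetric monoidal, hence commutes with forming $K^\bullet(L)$ and reflects exactness, and every object of ${\rm Supervec}$ or ${\rm Vec}$ is Koszul), so Theorem \ref{PBWt} applies directly to $L$ inside $\C$; the translation of condition (3) via Subsection \ref{small} is then the same as yours. You instead apply Theorem \ref{PBWt} downstairs to $F(L)$ and transport all three conditions back along $F$, which requires the same functoriality inputs (exactness, faithfulness, compatibility with $S^\bullet$, $\Lambda^\bullet$, ${\rm FOLie}$, $U(\cdot)$, and associated graded) but packages them differently. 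One point to correct: your opening remark that ``$L$ itself need not be a Koszul object of $\C$'' is actually false in this setting --- the very transport argument you use shows that $K^\bullet(L)$ is exact because $F(K^\bullet(L))=K^\bullet(F(L))$ is, and this is exactly the shortcut the paper exploits. The error is harmless since your proof never relies on that claim, but recognizing it would have let you skip the transport of conditions (1) and (2) entirely. Your handling of the delicate step --- identifying $E_2(W)=W^{(1)}$ and $E_3(W)=W_{\rm odd}^{(1)}$ and noting that $\beta$ restricted to these Frobenius twists is the squaring map $x\mapsto[x,x]$ and the map $x\mapsto[[x,x],x]$, so that the non-polylinear identities emerge from the operadic vanishing condition --- matches the paper's intent and is the genuinely substantive part of the deduction.
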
 

\begin{proof}
Since $\C$ is super-Tannakian, $L$ is a Koszul object. 
Thus, Theorem \ref{PBWt} applies. Moreover, by the results of Subsection \ref{small}, 
condition (3) in Theorem \ref{PBWt} specializes to condition (3) of Corollary \ref{PBWt1}. 
This implies the result.   
\end{proof}  

\begin{remark} By virtue of Remark \ref{infii}, the results of this section also apply to 
infinite-dimensional Lie algebras, i.e., the case when $V,L$ are ind-objects of $\C$.
\end{remark}

We will see, however, that the PBW theorem for an operadic Lie algebra 
can fail in any characteristic $p\ge 5$ when the underlying object $L$ is not Koszul. 
Namely, this happens in the Verlinde category ${\rm Ver}_p$, and to ensure 
the PBW theorem, one needs to impose an equation of degree $p$, similar to the equations 
$[x,x]=0$ in characteristic $2$ and $[[x,x],x]=0$ for odd $x$ in characteristic $3$
considered above. This is discussed in subsequent sections.  

\section{Koszul complexes in the Verlinde category}

\subsection{Homology of Koszul complexes in the Verlinde category}
Let us now study Koszul complexes in the Verlinde category ${\rm Ver}_p$.
Recall (see e.g. \cite{O}, Subsection 3.2 and \cite{EOV}, Subsection 2.1) that ${\rm Ver}_p$ has simple objects $L_1={\bold 1}, L_2,...,L_{p-1}$, which are self-dual. 
The object $L_{p-1}$ is invertible and for $p\ge 3$ generates a copy of ${\rm Supervec}$, so we will denote it by $\bold 1_-$. 
Also, the subcategory ${\rm Ver}_p^+$ spanned by $L_i$ with odd $i$ is a tensor subcategory of ${\rm Ver}_p$, 
and ${\rm Ver}_p={\rm Ver}_p^+\boxtimes {\rm Supervec}$ for $p\ge 3$. 

\begin{proposition}\label{Koszulhom} 
The object $L_m\in {\rm Ver}_p$ is Koszul if and only if 
it is invertible, i.e., $m=1,p-1$. For $2\le m\le p-2$, the homology of 
$K^\bullet(L_m)$ is given by the formulas 
$$
H^0(K^\bullet(L_m))=\bold 1;\ H^m(K^\bullet(L_m))={\bold 1}_-^{\otimes m+1};
$$
$$
H^i(K^\bullet(L_m))=0\text{ for }i\ne 0,m.
$$
Moreover, the nontrivial homology in homological degree $m$ sits in diagonal degree 
$p$. 
\end{proposition}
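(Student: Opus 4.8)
The plan is to reduce everything to the structure of the symmetric and (dual) exterior algebras of $L_m$, combined with the fact recorded in Section 2 that $K^\bullet(V)$ is exact in every diagonal degree $<p$ in any symmetric tensor category over $\kk$. First I would recall from \cite{EHO} the relevant structure for $2\le m\le p-2$: the algebra $SL_m=\oplus_{j\ge 0}S^jL_m$ is finite dimensional with $S^jL_m=0$ for $j>p-m$ and with $S^{p-m}L_m$ invertible, and the exterior algebra $\wedge L_m=\oplus_{i\ge 0}\wedge^iL_m$ is finite dimensional with $\wedge^iL_m=0$ for $i>m$ and with $\wedge^mL_m$ invertible. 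Since $L_m$ is self-dual we have $\Lambda^iL_m=(\wedge^iL_m)^*$, so $\Lambda^iL_m=0$ for $i>m$ and $\Lambda^mL_m$ is invertible as well. (Alternatively, all of this can be obtained by realizing $L_m$ as the image of the Jordan block $J_m$ under the semisimplification functor $\mathrm{Rep}_{\kk}(\mathbb Z/p)\to {\rm Ver}_p$, using that for $n<p$ the functors $S^n,\wedge^n$ are direct summands of $\otimes^n$ and hence commute with every symmetric tensor functor.)

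Granting this, the heart of the argument is a degree count on the diagonal grading. The degree-$d$ part of $K^i(L_m)$ is $S^{d-i}L_m\otimes \Lambda^iL_m$, which is nonzero only when $0\le i\le m$ and $0\le d-i\le p-m$; in particular every nonzero homogeneous piece satisfies $0\le d\le p$, and $d=p$ forces $i=m$. Now I would invoke exactness in diagonal degrees $<p$: for $1\le d\le p-1$ the complex $K^\bullet(L_m)_d$ is acyclic in positive homological degree, while $K^\bullet(L_m)_0=\bold 1$ is concentrated in homological degree $0$. Hence all homology in positive homological degree is concentrated in diagonal degree $p$. But in diagonal degree $p$ the complex $K^\bullet(L_m)_p$ has a single nonzero term, namely $K^m_p=S^{p-m}L_m\otimes \Lambda^mL_m$, since the neighbouring terms involve $S^{p-m+1}L_m=0$ and $\Lambda^{m+1}L_m=0$; thus both differentials touching it vanish. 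Therefore $H^i(K^\bullet(L_m))=0$ for $i\ne 0,m$, $H^0=\bold 1$, and $H^m(K^\bullet(L_m))=S^{p-m}L_m\otimes \Lambda^mL_m$ sits in diagonal degree $p$.

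It then remains to identify this surviving object. Being a tensor product of two invertibles it is invertible, hence isomorphic to $\bold 1$ or $\bold 1_-$, and its isomorphism type is pinned down by its categorical dimension. Using the character formulas $\dim S^{n}L_m=\binom{\dim L_m+n-1}{n}$ and $\dim\wedge^{n}L_m=\binom{\dim L_m}{n}$ (valid for $n<p$) together with $\dim L_m=m$, I get $\dim S^{p-m}L_m=\binom{p-1}{m-1}\equiv(-1)^{m-1}$ and $\dim \Lambda^mL_m=\binom{m}{m}=1$ in $\kk$, so the product has dimension $(-1)^{m-1}$ and equals $\bold 1_-^{\otimes(m-1)}=\bold 1_-^{\otimes(m+1)}$, as claimed. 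Finally, $H^m\ne 0$ for $2\le m\le p-2$ shows $L_m$ is not Koszul in this range, whereas $L_1=\bold 1$ and $L_{p-1}=\bold 1_-$ are objects of a (super)vector space category and are Koszul by the classical exactness of the Koszul complex; this yields the Koszulity dichotomy.

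The hard part will be Step 1: establishing that $SL_m$ and $\wedge L_m$ truncate exactly at degrees $p-m$ and $m$ with invertible top pieces. The vanishing $S^{p-m+1}L_m=0$ amounts to showing that the $\mathbb Z/p$-representation $S^{p-m+1}J_m$ is a direct sum of regular (projective) blocks $J_p$, a concrete but genuinely nontrivial representation-theoretic computation, and the invertibility of the socles $S^{p-m}L_m$, $\wedge^mL_m$ must likewise be extracted from \cite{EHO}. Once this finiteness and invertibility are in hand, the cohomology falls out of the degree count above with essentially no further work.
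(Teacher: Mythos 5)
Your argument is correct and follows the paper's proof essentially verbatim: both rest on the truncation of $SL_m$ and $\Lambda L_m$ (from \cite{EOV}, Proposition 2.4 --- not \cite{EHO}) forcing $K^\bullet(L_m)$ into diagonal degrees $\le p$, on exactness of the Koszul complex in diagonal degrees $<p$, and on the observation that the diagonal-degree-$p$ part reduces to the single term $S^{p-m}L_m\otimes\Lambda^mL_m$ with both adjacent differentials vanishing. The only difference is cosmetic: the paper identifies this term as ${\bold 1}_-^{\otimes m+1}$ by citing \cite{EOV}, Proposition 6.1 directly, whereas you deduce it from invertibility of the two factors together with a categorical-dimension count $\binom{p-1}{m-1}\equiv(-1)^{m-1}$, which is legitimate since the invertibility of the top graded pieces is exactly the input you already import in your Step 1.
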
 

\begin{proof} First of all, if $m=1,p-1$ then $L_m\in {\rm Supervec}\subset {\rm Ver}_p$, hence Koszul. 
So it remains to consider the case $m\ne 1,p-1$. 
Recall (\cite{EOV}, Proposition 2.4) that 
$\Lambda L_m=\oplus_{j=0}^m\Lambda^jL_m$, 
and $SL_m=\oplus_{i=0}^{p-m}S^iL_m$.
This means that $K^\bullet(L_m)$ sits 
in diagonal degrees $0\le d\le p$. 
Since the Koszul complex is exact for Schur functors in diagonal degrees $<p$, 
nontrivial homology of $K^\bullet(L_m)$ 
may exist only in degree $p$. But 
$K^\bullet(L_m)[p]=K^m(L_m)[p]=S^{p-m}L_m\otimes \Lambda^mL_m$.
Now, it follows from \cite{EOV}, Proposition 6.1 that 
$S^{p-m}L_m={\bold 1}_-^{\otimes m+1}$, 
while $\Lambda^m L_m=\bold 1$. This implies the statement. 
\end{proof} 

\begin{corollary}\label{kun} Let $\C$ be a symmetric tensor category with 
a symmetric tensor functor $F: \C\to {\rm Ver}_p$. 
Then for any $V\in \C$ and $2\le m\le p-2$, we have 
$$
H^m(K^\bullet(V))={\rm Hom}(L_m,F(V))^{(1)}\otimes {\bold 1}_-^{\otimes m+1}. 
$$
Moreover, the algebra $H^\ast(K^\bullet(V))$ is an algebra in ${\rm Supervec}$ 
given by the formula 
$$
H^\ast(K^\bullet(V))=
$$
$$
S(\oplus_{\ell=1}^{\frac{p-3}{2}} {\rm Hom}(L_{2\ell},F(V))^{(1)}\otimes {\bold 1}_-)
\otimes \wedge(\oplus_{\ell=1}^{\frac{p-3}{2}} {\rm Hom}(L_{2\ell+1},F(V))^{(1)}\otimes {\bold 1}).
$$
Here ${\rm Hom}(L_m,F(V))^{(1)}$ sits in homological degree $m$ and diagonal degree $p$.
\end{corollary}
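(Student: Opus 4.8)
The plan is to deduce the statement from the single-object computation of Proposition~\ref{Koszulhom} by first passing to ${\rm Ver}_p$ along $F$, then assembling arbitrary objects from the simples $L_m$ by a Künneth argument, and finally invoking a twisted-linearity statement of the kind established in Proposition~\ref{divi} to account for the Frobenius twist. Since $F$ is exact and symmetric monoidal, and the complex $K^\bullet(V)$ together with its differential is built entirely from $S^iV$, $\Lambda^iV$, the coevaluation, and the permutation $P_{23}$, we have $F(K^\bullet(V))\cong K^\bullet(F(V))$ and hence $F(H^\bullet(K^\bullet(V)))\cong H^\bullet(K^\bullet(F(V)))$. As the right-hand side of the asserted formula is assembled from the canonical objects ${\bold 1}$ and ${\bold 1}_-$, which $F$ preserves, it suffices to carry out the computation in ${\rm Ver}_p$, i.e.\ for $W:=F(V)$.

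Next I record the behaviour of the Koszul complex under direct sums. Because $S(A\oplus B)\cong SA\otimes SB$ and $\Lambda(A\oplus B)\cong \Lambda A\otimes \Lambda B$, one obtains an isomorphism of DG algebras $K^\bullet(A\oplus B)\cong K^\bullet(A)\otimes K^\bullet(B)$, the differential being the tensor-product derivation. Since ${\rm Ver}_p$ is semisimple, the Künneth formula has no correction terms and $H^\bullet$ is multiplicative. Writing $W=\oplus_m M_m\otimes L_m$ with $M_m={\rm Hom}(L_m,W)$ and $d_m=\dim M_m$, this reduces everything to the powers $K^\bullet(L_m)^{\otimes d_m}$. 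By Proposition~\ref{Koszulhom}, $H^\bullet(K^\bullet(L_m))={\bold 1}\oplus {\bold 1}_-^{\otimes m+1}$ for $2\le m\le p-2$, the second summand sitting in homological degree $m$ and diagonal degree $p$, while $L_1$ and $L_{p-1}$ are Koszul and contribute only ${\bold 1}$.

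The step I expect to be the main obstacle is explaining why the multiplicity space enters as its Frobenius twist $M_m^{(1)}$ rather than as $M_m$. For this I would rerun the Euler-derivation argument of Proposition~\ref{divi} with the Koszul differential in place of the De Rham one: from Cartan's identity $E=d\partial+\partial d$ one gets $E=\partial d$ on Koszul cycles, and since this lands in the image of $\partial$ it is zero on $H^\bullet(K^\bullet(W))$; hence the Koszul homology is concentrated in diagonal degrees divisible by $p$, and $W\mapsto H^m(K^\bullet(W))[p]$ is an additive, twisted $\kk$-linear functor $G_m:{\rm Ver}_p\to{\rm Supervec}$. Additivity follows from the bidegree form of the Künneth formula, since in homological degree $m$ and diagonal degree $p$ the only surviving Künneth terms pair a diagonal-degree-$p$ class with a diagonal-degree-$0$ one. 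By Proposition~\ref{Koszulhom}, $G_m(L_{m'})=\delta_{m,m'}\,{\bold 1}_-^{\otimes m+1}$, and an additive twisted-linear functor with these values is forced to be $G_m(W)={\rm Hom}(L_m,W)^{(1)}\otimes {\bold 1}_-^{\otimes m+1}$, the twisted-linearity being precisely what makes the multiplicity space appear through its Frobenius twist. This yields the first displayed formula (as the diagonal-degree-$p$, i.e.\ generating, part of $H^m$).

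Finally I would determine the algebra structure. The homology of the commutative DG algebra $K^\bullet(W)$ is graded-commutative with respect to the homological grading and the braiding of the category; for two generators of homological degree $m$ valued in ${\bold 1}_-^{\otimes m+1}$ the commutation sign is $(-1)^{m^2}\cdot(-1)^{(m+1)^2}=-1$, so every generator squares to zero and distinct generators anticommute. For even $m=2\ell$ the generators lie in the odd object $M_{2\ell}^{(1)}\otimes{\bold 1}_-$, and the free graded-commutative algebra they generate is exactly $S(M_{2\ell}^{(1)}\otimes{\bold 1}_-)$ in ${\rm Supervec}$ (the categorical symmetric algebra of an odd object being exterior on the underlying space); for odd $m=2\ell+1$ they lie in $M_{2\ell+1}^{(1)}\otimes{\bold 1}$ and generate $\wedge(M_{2\ell+1}^{(1)}\otimes{\bold 1})$. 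Tensoring over all $m$ gives the displayed description of $H^\ast(K^\bullet(V))$ as an algebra in ${\rm Supervec}$. Besides the twisted-linearity input above, the only real hazards are this sign computation and keeping the categorical $S$/$\wedge$ conventions consistent.
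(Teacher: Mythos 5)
Your proposal is correct and follows essentially the same route as the paper, which proves this corollary in two sentences by combining Proposition~\ref{Koszulhom} with the K\"unneth formula and noting that the Frobenius twist arises because scalar multiplication by $\lambda$ on $L_m$ acts by $\lambda^p$ on $S^{p-m}L_m\otimes\Lambda^mL_m$. Your Euler-derivation argument for twisted $\kk$-linearity and your sign check for the algebra structure are just more detailed versions of the same observations.
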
 

\begin{proof} 
This follows from Proposition \ref{Koszulhom} and the K\"unneth formula. 
The Frobenius twist comes from the fact that the operator of multiplication 
by $\lambda\in \kk$ on $L_m$ acts on $S^{p-m}L_m\otimes \Lambda^m L_m$ by 
$\lambda^p$. 
\end{proof} 

\subsection{Almost Koszulity of symmetric and exterior algebras in ${\rm Ver}_p$}
Proposition \ref{Koszulhom} can be interpreted as the statement that 
the algebra $SL_m$ is almost Koszul in the sense of \cite{BBK}. 

\begin{definition} (a generalization of Definition 3.1, \cite{BBK}) Let
$A$ be a $\Bbb Z_+$-graded algebra in $\C$ with $A[0]=\bold 1$.
We call $A$ (left) almost Koszul if there exist integers
$r,s\ge 1$ such that $A[n]=0$ 
for all $n > r$, and there is a graded complex
$P_\bullet$: 
$$
 0\to P_s\to...\to P_1\to P_0\to 0
$$
of projective (=free) $A$-modules such that each
$P_i$ is generated by its component $P_i[i]$
of degree $i$, and the only non-zero homology is
$A[0]$ in degree $0$, and $W=A[r]\otimes P_s[s]$
in degree $r+s$. We say then that
$A$ is an $(r,s)$-Koszul algebra. 
\end{definition} 

\begin{proposition}\label{dualit} Suppose $r,s\ge 2$ and $A$ is an $(r,s)$-Koszul algebra. 
Then $A$ is quadratic, and $A^!$ is $(s,r)$-Koszul.  
\end{proposition}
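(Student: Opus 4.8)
The plan is to transport the classical argument of \cite{BBK} to $\C$, using that every step is diagrammatic and all objects in sight are rigid. I would first read off quadraticity from the bottom of the given complex $P_\bullet$. Since $s\ge 2$, the only homology of $P_\bullet$ apart from $\bold 1$ in degree $0$ sits in homological degree $s\ge 2$ and diagonal degree $r+s\ge 4$, so $P_\bullet$ is a genuine resolution of $\bold 1$ through homological degree $s-1$, which (after removing contractible summands) I may take minimal there. From $H_0(P_\bullet)=\bold 1$ and $P_0=A$, the image of $P_1\to P_0$ is the augmentation ideal, generated by its degree-$1$ part, so $A$ is generated in degree $1$; from $H_1(P_\bullet)=0$ together with the fact that $P_2$ is generated in degree $2$, the first syzygy $\ker(P_1\to P_0)$ is the image of $P_2$ and hence generated in degree $2$, so the relations lie in degree $2$. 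By the characterization of quadratic algebras recalled above, $A$ is quadratic, and $A^!$ is defined.

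Next I would apply $\underline{\rm Hom}_A(-,\bold 1)$ to $P_\bullet$; all differentials vanish for degree reasons, exactly as in the proof of Proposition \ref{kosdual1}. The linear strand $\underline{\rm Ext}^i_A(\bold 1,\bold 1)[i]$ lives in diagonal degree $i\le s<r+s$, so it is untouched by the top homology $W$ and is therefore computed by $P_\bullet$; for a quadratic algebra this strand equals $A^!$. This yields $P_i[i]\cong A^![i]^*$, identifies the terms of $P_\bullet$ with $K^i_A=A\otimes A^![i]^*$, and hence (minimal linear complexes being determined by the multiplication, so unique) identifies $P_\bullet\cong K^\bullet_A$, with top homology $W=A[r]\otimes A^![s]^*$ in homological degree $s$ and diagonal degree $r+s$. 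To bound $A^!$ from above, I would compute one more syzygy: the $s$-th syzygy of $\bold 1$ is $\Omega_s=W$, which, being concentrated in the top diagonal degree $r+s$ of $P_s$, is a trivial $A$-module placed in degree $r+s$. Its minimal resolution has all generators in degrees $\ge r+s$, so the resolvent of $\bold 1$ in homological degree $s+k$ has no component in degree $s+k$ for any $k\ge 1$ (this is where $r\ge 2$ is used), and the linear strand terminates at $s$, i.e. $A^![n]=0$ for $n>s$.

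With both $A$ and $A^!$ now finite, $K^\bullet_A$ is a finite complex of objects of $\C$, and I would set $P'_\bullet:=K^\bullet_{A^!}=A^!\otimes A[\bullet]^*$: a complex of free $A^!$-modules of length $r$ (as $A[n]=0$ for $n>r$) with $P'_i$ generated in degree $i$ and quadratic dual $(A^!)^!=A$. The categorical Koszul self-duality of Proposition \ref{kosdual1} identifies $K^\bullet_{A^!}$ with the restricted dual of $K^\bullet_A$ under the regrading that interchanges the homological grading with the $A$-internal grading. Since rigid dualization is exact it commutes with homology, so $\bold 1$ goes to $\bold 1$, the middle stays acyclic, and $W=A[r]\otimes A^![s]^*$ (homological degree $s$) is carried to $W^*=A^![s]\otimes A[r]^*$ in homological degree $r$ and diagonal degree $r+s$. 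As $P'_r[r]=A[r]^*$, this is precisely the asserted $W'=A^![s]\otimes P'_r[r]$, exhibiting $A^!$ as an $(s,r)$-Koszul algebra.

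The step I expect to be the main obstacle is this last passage. Because the lengths $s$ and $r$ differ, the duality is not a termwise transpose but the grading-swapping restricted duality, and the care lies in matching the two gradings and the differentials so that the top homology $W$ lands in homological degree $r$ as $W'$, together with the non-formal input $A^![n]=0$ for $n>s$ (which rests on the top syzygy $W$ being a trivial module and on $r\ge 2$). Once these are in place, exactness of dualization makes the transfer of homology automatic, and the remaining verifications are the same element-free diagram chases that underlie the classical theory in \cite{BBK} and \cite{PP}.
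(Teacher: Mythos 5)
Your argument is correct and is essentially the paper's approach: the paper's proof of Proposition \ref{dualit} simply asserts that it is parallel to Proposition 3.11 of \cite{BBK}, and what you have written is precisely that argument (quadraticity from the bottom of $P_\bullet$, identification of $P_\bullet$ with $K^\bullet_A$ via the linear Ext strand, vanishing of $A^![n]$ for $n>s$ from the top syzygy being a trivial module in degree $r+s$ with $r\ge 2$, and the grading-swapping restricted duality $K^\bullet_{A^!}\cong (K^\bullet_A)^*_{\rm res}$) transported to $\C$, with the details the paper leaves implicit correctly filled in.
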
 

\begin{proof}
The proof is parallel to the proof of Proposition 3.11 of \cite{BBK}, which 
is the special case of this result for the category of vector spaces.  
\end{proof} 

\begin{theorem}\label{almkoz} The algebra $SL_m$ is $(p-m,m)$-Koszul, 
and the algebra $\wedge L_m$ is $(m,p-m)$-Koszul.  
\end{theorem}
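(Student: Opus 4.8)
The plan is to prove the statement for $SL_m$ directly, by exhibiting the Koszul complex $K^\bullet(L_m)$ as the required complex $P_\bullet$, and then to deduce the claim for $\wedge L_m$ by Koszul duality. Throughout I assume $2\le m\le p-2$, the range in which $L_m$ fails to be Koszul. Recall from the proof of Proposition \ref{Koszulhom} (and \cite{EOV}) that $SL_m=\oplus_{i=0}^{p-m}S^iL_m$ and $\Lambda L_m=\oplus_{j=0}^{m}\Lambda^jL_m$, so that $SL_m[n]=0$ for $n>p-m$; this forces $r=p-m$. Moreover $K^\bullet(L_m)=SL_m\otimes\Lambda^\bullet L_m$ is concentrated in homological degrees $0\le i\le m$, which suggests taking $P_i:=K^i(L_m)=SL_m\otimes\Lambda^iL_m$ and $s=m$.

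First I would check that $P_\bullet$ satisfies the structural requirements of an $(r,s)$-Koszul complex. Each $P_i$ is a free $SL_m$-module, and since $\Lambda^iL_m$ sits in diagonal degree $i$, the component $P_i[i]=\Lambda^iL_m$ generates $P_i$; the differential is $SL_m$-linear (this is exactly how $K^\bullet(L_m)$ is used as a free resolution in Proposition \ref{kosdual}). Next I would read off the homology from Proposition \ref{Koszulhom}: the only nonzero homology is $\bold 1$ in homological degree $0$ and $\bold 1_-^{\otimes m+1}$ in homological degree $m$, the latter sitting in diagonal degree $p=(p-m)+m=r+s$. It then remains to identify this top homology with $W=A[r]\otimes P_s[s]$. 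Here $A[r]=S^{p-m}L_m=\bold 1_-^{\otimes m+1}$ and $P_s[s]=\Lambda^mL_m=\bold 1$, so $A[r]\otimes P_s[s]=\bold 1_-^{\otimes m+1}$, matching the computed homology exactly. This establishes that $SL_m$ is $(p-m,m)$-Koszul.

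For $\wedge L_m$ I would invoke Koszul duality. Since $r=p-m\ge 2$ and $s=m\ge 2$ in our range, Proposition \ref{dualit} applies and shows that $(SL_m)^!$ is $(m,p-m)$-Koszul. Finally, because $L_m$ is self-dual in ${\rm Ver}_p$, we have $(SL_m)^!=\wedge L_m^*=\wedge L_m$, so $\wedge L_m$ is $(m,p-m)$-Koszul, as claimed.

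The proof is essentially bookkeeping once Proposition \ref{Koszulhom} is in hand, and I do not expect a serious obstacle. The only points requiring care are the precise matching of the top homology with $W=A[r]\otimes P_s[s]$ (which hinges on the explicit identifications $S^{p-m}L_m=\bold 1_-^{\otimes m+1}$ and $\Lambda^mL_m=\bold 1$ from \cite{EOV}) and the verification that both indices $r=p-m$ and $s=m$ are $\ge 2$, which is needed to apply the duality Proposition \ref{dualit} and holds precisely because $2\le m\le p-2$.
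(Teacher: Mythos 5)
Your proof is correct and follows the same route as the paper: the paper's proof of Theorem \ref{almkoz} is precisely that it "follows immediately from Proposition \ref{Koszulhom} and Proposition \ref{dualit}," and your write-up simply makes explicit the bookkeeping (the identification of $r=p-m$, $s=m$, the top homology $S^{p-m}L_m\otimes\Lambda^mL_m=\bold 1_-^{\otimes m+1}$, and the self-duality of $L_m$) that the paper leaves to the reader. No gaps.
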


\begin{proof} This follows immediately from Proposition \ref{Koszulhom} and Proposition \ref{dualit}. 
\end{proof} 

\subsection{The periodic Koszul complex and the Yoneda algebras of the symmetric and exterior algebras in ${\rm Ver}_p$}
Proposition \ref{Koszulhom} allows us to construct a periodic resolution 
of the augmentation module $\bold 1$ by free $SV$-modules 
for $V=L_m$ and $2\le m\le p-2$, similarly to \cite{BBK}. Namely, define 
the {\it periodic Koszul complex} $K^\bullet_{\rm per}(L_m)$  
to be a periodic extension of $K^\bullet(L_m)$ with period $m+1$.
Namely, $K^i_{\rm per}(L_m):=K^i(L_m)$ for $0\le i\le m$, and 
$$
K^{i+m+1}_{\rm per}(L_m)[r]=K^i_{\rm per}(L_m)[r+p]\otimes {\bold 1}_-^{\otimes m+1}
$$
for each $r>0$, where the differential $\partial: K^{i+1}_{\rm per}(L_m)\to K^i_{\rm per}(L_m)$
is periodic with period $m+1$ for $i\ge 0$, and $\partial_m={\rm Id}$
(note that this makes sense since $K^{m+1}_{\rm per}(L_m)=K^m_{\rm per}(L_m)=S^{p-m}L_m\otimes \Lambda^mL_m={\bold 1}_-^{\otimes m+1}$). 
Here the numbers in square brackets denote the diagonal degrees.  
    
By definition, the periodic Koszul complex is exact. 

\begin{corollary} \label{coro} 
For $2\le m\le p-2$ we have 
$$
\underline{\rm Ext}_{SL_m}^\bullet(\bold 1,\bold 1)=\wedge^\bullet (L_m\oplus {\bold 1}_-)=\wedge^\bullet L_m\otimes_- \wedge^\bullet {\bold 1}_-
$$
if $m$ is even, and 
$$
\underline{\rm Ext}_{SL_m}^\bullet(\bold 1,\bold 1)=\wedge^\bullet L_m\otimes  S^\bullet{\bold 1}
$$
if $m$ is odd, where the additional generator $\bold 1$ or $\bold 1_-$ 
sits in homological degree $m+1$ and diagonal degree $p$,
and $\otimes_-$ means the supertensor product (i.e., the generators anticommute).  
  
Similarly, we have 
$$
\underline{\rm Ext}_{\wedge L_m}^\bullet(\bold 1,\bold 1)=S^\bullet L_m\otimes_-  \wedge^\bullet {\bold 1}_-
$$
if $m$ is even, and 
$$
\underline{\rm Ext}_{\wedge L_m}^\bullet(\bold 1,\bold 1)=S^\bullet (L_m\oplus {\bold 1})=S^\bullet L_m\otimes S^\bullet {\bold 1}
$$
if $m$ is odd, where the additional generator $\bold 1$ or $\bold 1_-$ 
sits in homological degree $p-m+1$ and diagonal degree $p$.
\end{corollary}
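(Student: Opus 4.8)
The plan is to use the periodic Koszul complex $K^\bullet_{\rm per}(L_m)$, which by construction is an exact complex of free $SL_m$-modules resolving the augmentation module $\bold 1$, and to compute $\underline{\rm Ext}^\bullet_{SL_m}(\bold 1,\bold 1)$ directly from it. First I would apply the internal Hom functor $\underline{\rm Hom}_{SL_m}(-,\bold 1)$ term by term. The essential point is that this resolution is minimal: the Koszul differential preserves the diagonal grading while lowering the homological degree by $1$, and the periodic gluing maps do the same, so every differential carries a free generator into $(SL_m)_{\ge 1}$ times a generator. Hence after applying $\underline{\rm Hom}_{SL_m}(-,\bold 1)$, which is evaluation at the generators against the degree-$0$ module $\bold 1$, all induced differentials vanish, and $\underline{\rm Ext}^i_{SL_m}(\bold 1,\bold 1)$ is exactly the dual of the object of homological-degree-$i$ generators of $K^\bullet_{\rm per}(L_m)$.

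Next I would read off those generators. On the initial period $0\le i\le m$ the periodic complex agrees with the ordinary Koszul complex, so exactly as in Proposition \ref{kosdual} these contribute $(\Lambda^i L_m)^*=\wedge^i L_m$ (using self-duality of $L_m$), and together they give the subalgebra $\wedge^\bullet L_m$ in homological degrees $0,\dots,m$. The periodic continuation then shifts, with period $m+1$, the homological degree by $m+1$, the diagonal degree by $p$, and twists the generator by $\bold 1_-^{\otimes m+1}$. Dualizing, this produces a single new algebra generator $u$ in homological degree $m+1$ and diagonal degree $p$, whose underlying object is $(\bold 1_-^{\otimes m+1})^*=\bold 1_-^{\otimes m+1}$; this equals $\bold 1_-$ when $m$ is even and $\bold 1$ when $m$ is odd, which is precisely the origin of the two cases in the statement.

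It remains to identify the algebra structure, and this is where I expect the main obstacle to lie. The identity $\partial_m={\rm Id}$ gluing consecutive periods shows that the Yoneda product with $u$ implements the period shift, giving isomorphisms $\underline{\rm Ext}^{\bullet+m+1}\cong \underline{\rm Ext}^{\bullet}\otimes \bold 1_-^{\otimes m+1}$; thus $u$ is not nilpotent and $\underline{\rm Ext}^\bullet_{SL_m}(\bold 1,\bold 1)\cong \wedge^\bullet L_m\otimes \kk[u]$ as a module, so that $u$ generates a \emph{polynomial} rather than exterior subalgebra. The delicate part is pinning down the super sign rules governing the multiplication: since $u$ has homological degree $m+1$ and underlying object $\bold 1_-^{\otimes m+1}$, one must track both the homological Koszul sign and the braiding on $\bold 1_-$ to see how $u$ (super)commutes with the generators of $\wedge^\bullet L_m$ and with itself. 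Carrying this out yields, for $m$ even, the supertensor product $\wedge^\bullet L_m\otimes_- \wedge^\bullet \bold 1_-$, where $\wedge^\bullet \bold 1_-=\kk[u]$ is polynomial on the odd generator $u$, i.e.\ $\wedge^\bullet(L_m\oplus \bold 1_-)$; and for $m$ odd the generator $u$ is even, giving the ordinary tensor product $\wedge^\bullet L_m\otimes S^\bullet \bold 1$.

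Finally, the statement for $\wedge L_m$ follows by the symmetric argument. The restricted dual of $K^\bullet_{\rm per}(L_m)$ is an exact resolution of $\bold 1$ by free $\wedge L_m$-modules, exactly as in the $\wedge V^*$ half of Proposition \ref{kosdual}; repeating the minimality and periodicity analysis interchanges the roles of $S$ and $\wedge$, and the top degree $p-m$ of $SL_m$ in place of $m$ shifts the extra polynomial generator to homological degree $p-m+1$ and diagonal degree $p$, producing $S^\bullet L_m\otimes_-\wedge^\bullet \bold 1_-$ for $m$ even and $S^\bullet(L_m\oplus \bold 1)$ for $m$ odd.
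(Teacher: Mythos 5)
Your proposal is correct and follows essentially the same route as the paper: the additive structure is read off from the periodic Koszul complex (using its minimality, which you justify correctly since every differential, including the gluing map $\partial_m$ whose image lies in the top piece $S^{p-m}L_m\otimes\Lambda^mL_m$ of the free module $K^m$, lands in the augmentation ideal times generators), and the ring structure comes from observing that Yoneda multiplication by the periodicity class $u$ implements the period shift. The only presentational difference is that where you argue the multiplicative structure and the (anti)commutation of $u$ with $\wedge^\bullet L_m$ directly, the paper delegates this to the categorical generalization of Zheng's theorem that the Yoneda algebra of a Frobenius $(r,s)$-Koszul algebra is a twisted polynomial algebra $A^![t]$ over the quadratic dual, the twist by the Nakayama automorphism being exactly the sign bookkeeping you describe.
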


\begin{proof} 
The additive structure follows by resolving the first copy of $\bold 1$ as an $SV$-module or $\wedge V^*$-module using 
the periodic Koszul complex $K^\bullet_{\rm per}(V)$ or its dual and computing internal Hom from 
this complex to $\bold 1$. The algebra structure is obtained
analogously to the paper \cite{Zh}, which computes the Yoneda algebra 
of a Frobenius almost Koszul algebra in the category of vector spaces. 
Namely, it is shown in \cite{Zh} that if $A$ is a Frobenius $(r,s)$-Koszul algebra over a field $\kk$
then the Yoneda algebra ${\rm Ext}_A(\kk,\kk)$ is the twisted polynomial algebra $A^![t]$, where 
$ta=\beta(a)t$ for $a\in A^!$ and $\beta$ is a certain Nakayama automorphism, and this statement can 
be generalized to the categorical setting, using that the algebra $SL_m$ and $\wedge L_m$ are Frobenius (in the sense that $A^*\cong A\otimes \chi$ 
as an $A$-module, where $\chi=\bold 1$ or $\chi={\bold 1}_-$).  
\end{proof} 

\begin{corollary}\label{coro1}
For any $V\in {\rm Ver}_p$, we have a bigraded algebra isomorphism
$$
\underline{\rm Ext}_{SV}^\bullet(\bold 1,\bold 1)=
$$
$$
\wedge^\bullet (V^*\oplus \bigoplus_{\ell=1}^{\frac{p-3}{2}} {\rm Hom}(L_{2\ell},F(V))^{(1)*}\otimes {\bold 1}_-)
\otimes S^\bullet(\bigoplus_{\ell=1}^{\frac{p-3}{2}} {\rm Hom}(L_{2\ell+1},F(V))^{(1)*}\otimes {\bold 1}),
$$
where $V^*$ sits in homological and diagonal degree $1$, and ${\rm Hom}(L_m,F(V))^{(1)*}$ sits in homological degree $m+1$ and 
diagonal degree $p$. 

Similarly, 
we have a bigraded algebra isomorphism
$$
\underline{\rm Ext}_{\wedge V^*}^\bullet(\bold 1,\bold 1)=
$$
$$
S^\bullet (V\oplus \bigoplus_{\ell=1}^{\frac{p-3}{2}} {\rm Hom}(L_{2\ell+1},F(V))^{(1)}\otimes {\bold 1})\otimes_-
\wedge^\bullet (\bigoplus_{\ell=1}^{\frac{p-3}{2}} {\rm Hom}(L_{2\ell},F(V))^{(1)}\otimes {\bold 1}_-),
$$
where $V$ sits in homological and diagonal degree $1$, and ${\rm Hom}(L_m,F(V))^{(1)}$ sits it homological degree $p-m+1$ and 
diagonal degree $p$.
\end{corollary}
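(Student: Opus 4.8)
The plan is to reduce the computation to the case of a single simple object, which is settled by Corollary \ref{coro}, using the semisimplicity of ${\rm Ver}_p$ together with a K\"unneth formula for internal Ext over tensor products of augmented algebras. First I would decompose $V$ into its isotypic components,
$$
V\cong\bigoplus_{m=1}^{p-1}M_m\otimes L_m,\qquad M_m:={\rm Hom}(L_m,F(V)),
$$
with each $M_m$ an ordinary (finite-dimensional) vector space. Since the symmetric algebra functor is exponential, $S(X\oplus Y)\cong SX\otimes SY$, this produces a factorization $SV\cong\bigotimes_m S(M_m\otimes L_m)$ of graded algebras in ${\rm Ver}_p$, and, after choosing a basis of $M_m$, a further factorization $S(M_m\otimes L_m)\cong (SL_m)^{\otimes\dim M_m}$.

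Next I would establish the K\"unneth formula $\underline{\rm Ext}_{A\otimes B}(\bold 1,\bold 1)\cong\underline{\rm Ext}_A(\bold 1,\bold 1)\otimes\underline{\rm Ext}_B(\bold 1,\bold 1)$ for augmented algebras with $A[0]=B[0]=\bold 1$. The point is that the tensor product of a free resolution of $\bold 1$ over $A$ with one over $B$ is a free resolution of $\bold 1$ over $A\otimes B$; for the blocks $L_m$ one may take the periodic Koszul complexes $K^\bullet_{\rm per}(L_m)$, which are exact. Applying $\underline{\rm Hom}_{A\otimes B}(-,\bold 1)$ to such a tensor product complex yields the tensor product of the individual internal Hom complexes, and the Yoneda product is compatible with this identification, with super-signs dictated by the homological grading. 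Running this over all isotypic blocks reduces the computation to $\underline{\rm Ext}_{SL_m}(\bold 1,\bold 1)^{\otimes\dim M_m}$.

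Now Corollary \ref{coro} evaluates each factor. The invertible blocks $m=1,p-1$ are Koszul and contribute only generators in homological and diagonal degree $1$; each copy of a non-invertible $L_m$ ($2\le m\le p-2$) contributes in addition one generator in homological degree $m+1$ and diagonal degree $p$. Summed over blocks, the degree-$1$ generators assemble into $V^*$ (using $L_m^*=L_m$ and the quadratic identity $\underline{\rm Ext}^1_{SV}(\bold 1,\bold 1)=(SV)[1]^*=V^*$, which dualizes the multiplicity spaces), while the $\dim M_m$ degree-$p$ generators of the $L_m$-block organize into ${\rm Hom}(L_m,F(V))^{(1)*}\otimes\bold 1_\pm$; here the Frobenius twist appears exactly as in Corollary \ref{kun}, from the fact that scaling $L_m$ by $\lambda\in\kk$ scales $S^{p-m}L_m\otimes\Lambda^mL_m$ by $\lambda^p$, and the dual comes from passage to cohomology. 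By Corollary \ref{coro}, for even $m=2\ell$ these generators anticommute and combine with $V^*$ into the exterior factor $\wedge^\bullet(V^*\oplus\bigoplus_\ell{\rm Hom}(L_{2\ell},F(V))^{(1)*}\otimes\bold 1_-)$, while for odd $m=2\ell+1$ they commute, forming the symmetric factor $S^\bullet(\bigoplus_\ell{\rm Hom}(L_{2\ell+1},F(V))^{(1)*}\otimes\bold 1)$. This is the asserted isomorphism for $SV$; the statement for $\wedge V^*$ follows identically from the second half of Corollary \ref{coro}.

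I expect the main difficulty to be bookkeeping rather than conceptual. One must verify that the K\"unneth identification is an isomorphism of algebras and not merely of bigraded objects, tracking the Koszul sign rule between generators of different homological parity and the interaction of the symmetric braiding in ${\rm Ver}_p$ with the homological signs; and one must confirm that the Frobenius twists and the dualizations on the multiplicity spaces combine precisely into the factors ${\rm Hom}(L_m,F(V))^{(1)*}$ appearing in the statement.
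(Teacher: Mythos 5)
Your proposal is correct and follows essentially the same route as the paper, whose proof of this corollary is precisely ``take the tensor product over the isotypic decomposition of $V$, apply Corollary \ref{coro} to each simple block, and keep track of the signs.'' The extra detail you supply --- the K\"unneth formula for internal Ext via tensor products of the periodic Koszul resolutions, and the bookkeeping of Frobenius twists and dualizations on the multiplicity spaces --- is exactly the content the paper leaves implicit.
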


\begin{proof} The statement is obtained from Corollary \ref{coro} by taking the tensor product and keeping track of the signs appropriately. 
\end{proof} 

\subsection{De Rham cohomology in ${\rm Ver}_p$}

In this subsection we compute the De Rham cohomology (defined in Subsection \ref{drc})
in the category ${\rm Ver}_p$.

\begin{proposition}\label{derham} For $1\le j,m\le p-1$, we have $H^j(DR^\bullet(L_m))[p]=0$ if $j\ne m$, and 
$H^m(DR^\bullet(L_m))[p]\cong {\bold 1}_-^{\otimes m+1}$. 
\end{proposition}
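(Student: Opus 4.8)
The plan is to exploit the fact that the De Rham differential $d$ preserves the diagonal grading: on the graded algebra $DR^\bullet(L_m)=SL_m\otimes\Lambda^\bullet L_m$ it sends $S^iL_m\otimes\Lambda^jL_m$ to $S^{i-1}L_m\otimes\Lambda^{j+1}L_m$, lowering the $S$-degree by $1$ while raising both the $\Lambda$-degree and the homological degree by $1$; hence $i+j$ is unchanged and $DR^\bullet(L_m)$ splits as a direct sum of finite subcomplexes indexed by the diagonal degree. By Proposition \ref{divi} the cohomology lives only in diagonal degrees divisible by $p$, so the entire content of the statement is the cohomology of the single subcomplex $DR^\bullet(L_m)[p]$. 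I would treat the generic indices $2\le m\le p-2$ first and the two invertible objects $m=1,p-1$ separately, since for the latter one of $SL_m$, $\Lambda L_m$ is infinite and the clean vanishing below is replaced by a characteristic-$p$ phenomenon.

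For $2\le m\le p-2$ I would invoke the support facts already used in the proof of Proposition \ref{Koszulhom} (from \cite{EOV}): $S^iL_m=0$ for $i>p-m$ and $\Lambda^jL_m=0$ for $j>m$. Thus in diagonal degree $p$ the only solution of $i+j=p$ with $0\le i\le p-m$, $0\le j\le m$ is $(i,j)=(p-m,m)$, so $DR^\bullet(L_m)[p]$ consists of the single object $S^{p-m}L_m\otimes\Lambda^mL_m$ in homological degree $m$; both the incoming differential (whose source $S^{p-m+1}L_m\otimes\Lambda^{m-1}L_m$ vanishes) and the outgoing one (whose target $S^{p-m-1}L_m\otimes\Lambda^{m+1}L_m$ vanishes) are zero. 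Therefore $H^j(DR^\bullet(L_m))[p]=0$ for $j\ne m$, while $H^m(DR^\bullet(L_m))[p]=S^{p-m}L_m\otimes\Lambda^mL_m={\bold 1}_-^{\otimes m+1}$, the last identification being exactly the one in the proof of Proposition \ref{Koszulhom} ($S^{p-m}L_m={\bold 1}_-^{\otimes m+1}$ and $\Lambda^mL_m={\bold 1}$).

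For the invertible objects $L_m$ lies in ${\rm Supervec}$ and I would compute directly. When $m=1$, $L_1={\bold 1}$ is an ordinary line and Proposition \ref{cartier} gives $H^*(DR^\bullet({\bold 1}))\cong S{\bold 1}^{(1)}\otimes\wedge{\bold 1}^{(1)}$; in diagonal degree $p$ (degree $1$ on the right) this is ${\bold 1}$ in homological degrees $0$ and $1$ (note ${\bold 1}^{(1)}={\bold 1}$), so within the asserted range $1\le j\le p-1$ only $H^1[p]={\bold 1}={\bold 1}_-^{\otimes 2}$ survives. When $m=p-1$, $L_{p-1}={\bold 1}_-$ is the odd line; here $S{\bold 1}_-={\bold 1}\oplus{\bold 1}_-$ is truncated but $\Lambda{\bold 1}_-=\bigoplus_{j\ge 0}\Lambda^j{\bold 1}_-$ is the infinite divided-power algebra, and the only terms of diagonal degree $p$ are $\xi\otimes s^{(p-1)}$ in homological degree $p-1$ and $s^{(p)}$ in homological degree $p$, where $\xi$ generates $S^1{\bold 1}_-$ and the $s^{(k)}$ are the divided powers. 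The connecting differential is $d(\xi\otimes s^{(p-1)})=\binom{p}{1}s^{(p)}=p\,s^{(p)}=0$, so $H^{p-1}(DR^\bullet({\bold 1}_-))[p]$ is the odd line $\langle\xi\otimes s^{(p-1)}\rangle={\bold 1}_-={\bold 1}_-^{\otimes p}$, and no other cohomology occurs within the range $1\le j\le p-1$ (the surviving class $s^{(p)}$ lies in homological degree $p$).

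The main obstacle is precisely the case $m=p-1$. There the naive super-Poincar\'e lemma would force exactness — indeed $L_{p-1}$ is Koszul, so its Koszul complex carries no homology in diagonal degree $p$ — and the nonvanishing class appears only because the divided-power multiplication on $\Lambda{\bold 1}_-$ turns the relevant differential into a binomial coefficient divisible by $p$. Getting this divided-power structure and the vanishing $\binom{p}{1}\equiv 0$ right, and thereby distinguishing the De Rham answer from the Koszul one (which they do \emph{not} share at the invertible objects), is the one genuinely nonformal point; the remaining cases reduce to the bookkeeping of supports already established for Proposition \ref{Koszulhom}.
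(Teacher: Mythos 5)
Your proof is correct and follows essentially the same route as the paper: for $2\le m\le p-2$ the paper likewise observes that the complex is concentrated in diagonal degrees $\le p$ and that in degree $p$ only the term $S^{p-m}L_m\otimes\Lambda^m L_m$ survives (in homological degree $m$), and for $m=1$ it invokes the Cartier isomorphism exactly as you do. The only difference is that for $m=p-1$ the paper simply cites ``the odd version of the Cartier isomorphism,'' whereas you carry out the divided-power computation $d(\xi\otimes s^{(p-1)})=\binom{p}{1}s^{(p)}=0$ explicitly; this is a correct filling-in of the detail the paper leaves to the reader.
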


\begin{proof} If $m=1$, this is the Cartier isomorphism of Proposition \ref{cartier}. 
For $m=p-1$, this is the odd version of the Cartier isomorphism. For $2\le m\le p-2$, 
the De Rham complex lives in diagonal degrees $0\le d\le p$, so by Proposition \ref{divi}, 
it is exact everywhere except in degrees $0$ and $p$, and in diagonal degree $p$ it 
clearly has cohomology ${\bold 1}_-^{\otimes m+1}$ in homological 
degree $m$. 
\end{proof} 

\begin{corollary}\label{anyobj} Let $V\in {\rm Ver}_p$. Then for any $1\le m\le p-1$, 
we have a canonical isomorphism $H^m(DR^\bullet(V))[p]\cong {\rm Hom}(L_m,V)^{(1)}\otimes {\bold 1}_-^{\otimes m+1}$. 
\end{corollary}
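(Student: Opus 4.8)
The plan is to reduce the statement for a general object $V \in {\rm Ver}_p$ to the case of simple objects $L_m$ handled in Proposition \ref{derham}, using the same structural ingredients already exploited for the Koszul complex in Corollary \ref{kun}. The two facts I would lean on are: first, that $H^m(DR^\bullet(\cdot))[p]$ in diagonal degree $p$ is a \emph{twisted $\kk$-linear additive functor} of the object (Proposition \ref{divi}(ii)); and second, that every object of ${\rm Ver}_p$ is semisimple, hence decomposes as a direct sum $V \cong \bigoplus_{m=1}^{p-1} {\rm Hom}(L_m,V)\otimes L_m$, where ${\rm Hom}(L_m,V)$ is an ordinary (finite-dimensional) $\kk$-vector space. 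Combining these, the additivity of the functor should turn the computation for $V$ into a sum over the simple constituents, each governed by Proposition \ref{derham}.

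The key steps, in order, are as follows. First I would write the semisimple decomposition $V \cong \bigoplus_m {\rm Hom}(L_m,V)\otimes L_m$ and invoke Proposition \ref{divi}(ii) so that $H^m(DR^\bullet(V))[p]$ is a twisted $\kk$-linear functor. By additivity (the K\"unneth formula, exactly as in the proof of Corollary \ref{kun}) the De Rham cohomology of a direct sum is the appropriate sum of products of the cohomologies of the summands. The crucial point is that among all simple summands $L_{m'}$, only $L_{m'}=L_m$ contributes to homological degree $m$: by Proposition \ref{derham}, $H^j(DR^\bullet(L_{m'}))[p]$ vanishes unless $j=m'$, and for the summand $L_1 = \bold 1$ (the unit, with trivial De Rham complex in positive degree) there is no contribution in positive homological degree either. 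So the only surviving term in homological degree $m$ comes from the isotypic piece ${\rm Hom}(L_m,V)\otimes L_m$, giving $H^m(DR^\bullet(L_m))[p]^{\,\oplus \dim{\rm Hom}(L_m,V)} \cong {\rm Hom}(L_m,V)\otimes {\bold 1}_-^{\otimes m+1}$ as an object, before accounting for the twist.

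The main obstacle, and the step requiring care, is the appearance of the \emph{Frobenius twist} ${\rm Hom}(L_m,V)^{(1)}$ on the right-hand side rather than ${\rm Hom}(L_m,V)$ itself. This is forced precisely by the twisted $\kk$-linearity of the functor: scaling a morphism of $V$ by $\lambda \in \kk$ rescales the degree-$p$ De Rham cohomology by $\lambda^p$, as already observed in the proof of Corollary \ref{kun} (the operator of multiplication by $\lambda$ on $L_m$ acts on $S^{p-m}L_m \otimes \Lambda^m L_m$ by $\lambda^p$). Consequently the multiplicity space ${\rm Hom}(L_m,V)$ enters through its Frobenius twist, and I must check that the canonical identification is functorial and compatible with the $\kk$-structure exactly as stated. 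Granting this, Proposition \ref{divi}(ii) and Proposition \ref{derham} assemble into the canonical isomorphism $H^m(DR^\bullet(V))[p]\cong {\rm Hom}(L_m,V)^{(1)}\otimes {\bold 1}_-^{\otimes m+1}$, completing the proof.
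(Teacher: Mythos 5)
Your approach is the same as the paper's: the paper's entire proof is ``this follows from Proposition \ref{derham} by taking a direct sum,'' and you have correctly filled in the details (semisimplicity of ${\rm Ver}_p$, the K\"unneth formula applied to $DR^\bullet(V\oplus W)=DR^\bullet(V)\otimes DR^\bullet(W)$ together with Proposition \ref{divi}(i) to isolate diagonal degree $p$, and twisted $\kk$-linearity to produce the Frobenius twist on the multiplicity space). That is all correct.

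One parenthetical claim in your argument is wrong, however, and would break the case $m=1$ if taken literally: you assert that the summand $L_1=\bold 1$ has ``trivial De Rham complex in positive degree'' and hence contributes nothing. In fact $DR^\bullet(\bold 1)$ is the De Rham complex of the affine line, and the failure of the Poincar\'e lemma in characteristic $p$ is precisely the point here: by the Cartier isomorphism (Proposition \ref{cartier}), $H^1(DR^\bullet(\bold 1))$ is nonzero in every diagonal degree divisible by $p$ (in degree $p$ it is spanned by the class of $x^{p-1}dx$). So the $L_1$-isotypic part of $V$ does contribute ${\rm Hom}(\bold 1,V)^{(1)}$ to $H^1(DR^\bullet(V))[p]$, which is exactly what the stated formula demands for $m=1$ (note $\bold 1_-^{\otimes 2}=\bold 1$). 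The fix is simply to delete that remark: Proposition \ref{derham} is stated for all $1\le m'\le p-1$, including $m'=1$ and $m'=p-1$ (the even and odd Cartier isomorphisms), so you should apply it uniformly to every simple summand rather than treating $L_1$ as an exception.
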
 

\begin{proof} This follows from Proposition \ref{derham} by taking a direct sum. 
\end{proof} 

Now let $\C$ be any symmetric tensor category over a field $\kk$ of characteristic $p$
which admits a symmetric tensor functor $F: \C\to {\rm Ver}_p$ (e.g. a fusion category, cf. \cite{O}, Theorem 1.5). 
Let $\C_{\rm st}$ be the largest super-Tannakian subcategory of $\C$, i.e., 
the full subcategory of all $X\in \C$ such that $F(X)$ is a direct sum of invertible objects. 
For instance, if $\C$ is finite then $\C_{\rm st}$ is the subcategory 
of objects with integer Frobenius-Perron dimension. 

\begin{corollary}\label{supertan} The functor $V\to H^m(DR^\bullet(V))[p]$ is twisted $\kk$-linear and lands 
in $\C_{\rm st}$. 
\end{corollary}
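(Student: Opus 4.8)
The plan is to deduce both claims from results already in hand, using that $F$ is an exact symmetric tensor functor. The twisted $\kk$-linearity requires no new work: it is exactly the content of Proposition \ref{divi}(ii), which applies to any symmetric tensor category over $\kk$, and restricting the scalar action to the summand of fixed homological degree $m$ preserves the relation $F(\lambda\alpha)=\lambda^p F(\alpha)$. So the only genuinely new assertion is that the functor lands in $\C_{\rm st}$.

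For this, I would first record that $F$ commutes with the entire De Rham package. The complex $DR^\bullet(V)$ is built functorially from $V$ using the symmetric monoidal structure, duality, the coevaluation, and the multiplication maps of $SV$ and $\Lambda V$, all of which a symmetric tensor functor preserves; in particular $F(S^iV)=S^i(F(V))$ and $F(\Lambda^i V)=\Lambda^i(F(V))$, and $F$ intertwines the differentials $d$. Since a tensor functor is exact, $F$ preserves kernels, images, and hence cohomology, and it respects the diagonal grading, so that
$$
F\bigl(H^m(DR^\bullet(V))[p]\bigr)=H^m(DR^\bullet(F(V)))[p].
$$
Applying Corollary \ref{anyobj} to $F(V)\in {\rm Ver}_p$ then gives
$$
H^m(DR^\bullet(F(V)))[p]\cong {\rm Hom}(L_m,F(V))^{(1)}\otimes {\bold 1}_-^{\otimes m+1}.
$$
The right-hand side is a direct sum of invertible objects of ${\rm Ver}_p$: the factor ${\rm Hom}(L_m,F(V))^{(1)}$ is a (Frobenius-twisted) vector space, hence a direct sum of copies of $\bold 1$, while ${\bold 1}_-^{\otimes m+1}$ is invertible (equal to $\bold 1$ or ${\bold 1}_-$ according to the parity of $m+1$). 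Thus $F\bigl(H^m(DR^\bullet(V))[p]\bigr)$ is a direct sum of invertible objects, which by the very definition of $\C_{\rm st}$ means $H^m(DR^\bullet(V))[p]\in \C_{\rm st}$, completing the argument.

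I do not anticipate a real obstacle, since the statement is essentially a formal transport of Corollary \ref{anyobj} along $F$ together with the definition of $\C_{\rm st}$. The only point meriting care is the compatibility of $F$ with the symmetric and exterior powers, the differential, and the passage to cohomology; but each of these is a standard consequence of $F$ being an exact symmetric tensor functor, so no additional computation is needed.
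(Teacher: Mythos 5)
Your proposal is correct and follows the paper's proof exactly: twisted $\kk$-linearity from Proposition \ref{divi}(ii), and membership in $\C_{\rm st}$ by applying the exact symmetric tensor functor $F$ (which commutes with the De Rham complex and its cohomology) and invoking Corollary \ref{anyobj} for $F(V)\in {\rm Ver}_p$. The paper states this in two sentences; you have merely spelled out the standard compatibilities.
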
 

\begin{proof} The twisted $\kk$-linearity follows from Proposition \ref{divi}(ii). The second statement 
follows from Corollary \ref{anyobj} by applying $F$. 
\end{proof} 
 
Now let $F_{\rm st}: \C_{\rm st}\to {\rm Supervec}$ be the restriction of $F$ to $\C_{\rm st}$. 

\begin{proposition}\label{formulaF} We have a canonical isomorphism
$$
F(V)\cong \oplus_{m=1}^{p-1}F_{\rm st}(H^m(DR^\bullet(V))[p])\otimes {\bold 1}_-^{\otimes m-1}\otimes L_m.
$$
\end{proposition}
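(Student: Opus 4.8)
The plan is to reduce the statement to Corollary \ref{anyobj} by transporting everything through the functor $F$, and then to perform the bookkeeping with the invertible object ${\bold 1}_-$. The starting observation is that $F$, being a symmetric tensor functor, is exact and faithful, and commutes (up to canonical isomorphism) with all the constructions out of which $DR^\bullet$ is built: tensor products, the symmetric and dual-exterior powers $S^iV$, $\Lambda^iV$, duals, the coevaluation and the permutation morphisms. Consequently $F$ carries the De Rham complex $DR^\bullet(V)$ to $DR^\bullet(F(V))$, and by exactness it commutes with taking cohomology, so that $F(H^m(DR^\bullet(V))[p])\cong H^m(DR^\bullet(F(V)))[p]$ for every $m$. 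By Corollary \ref{supertan} the object $H^m(DR^\bullet(V))[p]$ lies in $\C_{\rm st}$, on which $F$ agrees with $F_{\rm st}$ landing in ${\rm Supervec}\subset {\rm Ver}_p$; hence $F_{\rm st}(H^m(DR^\bullet(V))[p])\cong H^m(DR^\bullet(F(V)))[p]$. Applying Corollary \ref{anyobj} to the object $F(V)\in {\rm Ver}_p$ identifies the right-hand side with ${\rm Hom}(L_m,F(V))^{(1)}\otimes {\bold 1}_-^{\otimes m+1}$.

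Next I would insert this into the claimed formula and simplify. Tensoring by ${\bold 1}_-^{\otimes m-1}$ merges the two powers of ${\bold 1}_-$ into
$$
{\bold 1}_-^{\otimes m+1}\otimes {\bold 1}_-^{\otimes m-1}={\bold 1}_-^{\otimes 2m}=\bold 1,
$$
since ${\bold 1}_-^{\otimes 2}=\bold 1$ (as $\bold 1_-=L_{p-1}$ generates a copy of ${\rm Supervec}$); thus the $m$-th summand on the right becomes ${\rm Hom}(L_m,F(V))^{(1)}\otimes L_m$. Finally, because $\kk$ is algebraically closed, hence perfect, the Frobenius twist of a finite-dimensional vector space has the same dimension as the space itself, so ${\rm Hom}(L_m,F(V))^{(1)}\otimes L_m\cong {\rm Hom}(L_m,F(V))\otimes L_m$ as objects of ${\rm Ver}_p$ (both are $L_m^{\oplus d_m}$ with $d_m=\dim {\rm Hom}(L_m,F(V))$). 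Summing over $m$ and using the semisimple decomposition $F(V)=\oplus_{m=1}^{p-1}{\rm Hom}(L_m,F(V))\otimes L_m$ of an object of ${\rm Ver}_p$ yields the asserted isomorphism.

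I expect the main technical point to be the very first step, the compatibility of $F$ with the formation and the cohomology of $DR^\bullet$: one must check that the differential $d$ --- assembled from $\bar\mu_*$, $\bar\mu$, the coevaluation and a permutation --- is genuinely preserved by the monoidal structure of $F$, and then invoke exactness to commute $F$ past $H^m$. The remaining subtlety is interpretive rather than computational: because $V\mapsto H^m(DR^\bullet(V))[p]$ is only twisted $\kk$-linear by Proposition \ref{divi}(ii), while $V\mapsto F(V)$ is $\kk$-linear, the isomorphism is canonical for each fixed $V$ but is not natural in the $\kk$-linear sense. The Frobenius twist on the multiplicity spaces is precisely what records this discrepancy, and it is harmless at the level of isomorphism classes by perfectness of $\kk$.
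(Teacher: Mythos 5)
Your proof is correct and follows the same route as the paper's (one-sentence) proof: apply the exact tensor functor $F$, which commutes with the formation of $DR^\bullet$ and hence with its cohomology, invoke Corollary \ref{anyobj} for $F(V)\in{\rm Ver}_p$, and cancel ${\bold 1}_-^{\otimes 2m}$. Your added remarks on the Frobenius twist of the multiplicity spaces and the resulting twisted-linearity of the identification are exactly the right caveats, consistent with Proposition \ref{divi}(ii).
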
 

\begin{proof}
This follows from Corollary \ref{anyobj} and the fact that $F$ is exact and commutes with the functor $V\mapsto DR^\bullet(V)$.  
\end{proof} 

\begin{example} Suppose $\C_{\rm st}\subset {\rm Supervec}$ (e.g., $\C={\rm Ver}_p(G)$ for an adjoint simple algebraic 
group $G$ and sufficiently large $p$, see \cite{O}, 4.3.2, 4.3.3). Then 
Proposition \ref{formulaF} implies that 
$$
F(V)\cong \oplus_{m=1}^{p-1}H^m(DR^\bullet(V))[p]\otimes {\bold 1}_-^{\otimes m-1}\otimes L_m.
$$
\end{example} 

\subsection{Frobenius almost Koszul algebras over a field arising from tensor categories}

The relation between tensor categories and almost Koszul algebras used in this section 
has, in fact, been known for a long time. Namely, in \cite{BBK}, the main examples of 
almost Koszul algebras are preprojective algebras $\Pi$ of Dynkin quivers, 
which are Frobenius $(h-2,2)$-Koszul (even though they are not Koszul, unlike the case of non-Dynkin quivers, \cite{MOV}, Theorem 2.3, \cite{EE}). 
A proof of this fact based on the theory of tensor categories was essentially given in \cite{MOV}, Subsection 2.2,
where it is shown that $\Pi$ is the image under a tensor functor of the quantum symmetric algebra 
$S_qL_2$ in the Verlinde category ${\rm Ver}(q)={\rm Ver}(q,{\bold{SL}}(2))$ over $\Bbb C$ at $q=-e^{\pi i/h}$ (where $L_2$ is the tautological representation 
of ${\bold{SL}}_q(2)$). It remains to note that the algebra $S_qL_2$ is $(h-2,2)$-Koszul, 
with dual $\wedge_q L_2$, as easily follows by considering the $q$-deformed Koszul complex
$K_q^\bullet(L_2)=S_qL_2\otimes \wedge_qL_2$. 

This approach was extended in \cite{Co} to the case of ${\bold{SL}}_q(3)$, 
yielding examples of Frobenius $(h-3,3)$-Koszul algebras. Moreover, 
it can be extended to the case of ${\bold{SL}}_q(m)$ for any $m\ge 2$, yielding 
examples of Frobenius $(h-m,m)$-Koszul algebras over $\Bbb C$. Namely, consider the 
fusion category ${\mathcal{C}}={\rm Ver}(q,{\bold{SL}}(m))$ attached to the group ${\bold{SL}}(m)$
(\cite{BK}, Definition 3.3.19), with $h\ge m+2$, i.e., 
the quotient of the category of tilting representations of ${\bold{SL}}_q(m)$
by negligible morphisms. Let $V=V_{\omega_1}$ be the tautological $m$-dimensional representation 
of ${\bold{SL}}_q(m)$, and $S_qV=\oplus_{r=0}^{h-m}S_q^rV$ 
be the quantum symmetric algebra of $V$, where $S_q^rV:=V_{r\omega_1}$. 
Also let $\wedge_qV=\oplus_{r=0}^m\wedge_q^rV$ be the quantum exterior algebra of $V$, 
where $\wedge^r_qV:=V_{\omega_r}$ (and we agree that $\omega_m=0$). 
Then we have a $q$-deformed Koszul complex $K_q^\bullet(V)=S_qV\otimes \wedge_q^\bullet V$,
defined similarly to the classical case. It is easy to show that this complex is exact 
in all diagonal degrees except $0,h$, and its homology in degree $h$ is the invertible 
object $V_{(h-m)\omega_1}$ sitting in homological degree $h-m$. 
Namely, we have the Koszul complex $S_qV\otimes \wedge_q^\bullet V$
for generic $q$, which is shown to be exact similarly to the case $q=1$, 
and this argument still works if $q=-e^{\pi i/h}$ in degrees $<h$,
while in degree $h$ we just have a 1-dimensional space. 
This implies, similarly to the results of this section, that 
the algebra $S_qV$ is $(h-m,m)$ Koszul, with quadratic dual 
$\wedge_q V$ (which is then $(m,h-m)$-Koszul). 

These are almost Koszul algebras in the tensor category 
${\rm Ver}(q,{\bold{SL}}(m))$, but they can be used to construct 
Frobenius almost Koszul algebras over $\Bbb C$, using the method of \cite{MOV}. 
For this purpose, pick an indecomposable module 
category ${\mathcal{M}}$ over ${\mathcal{C}}$.
For $m=2$, such categories are known to be classified by 
ADET diagrams with Coxeter number $h$ (see \cite{KO,O1}),
and for small $m$ there are some classification results
by Ocneanu (\cite{Oc}), but for any $m$ we can, for example, take
the tensor category itself, ${\mathcal{M}}={\mathcal{C}}$. 
If ${\mathcal{M}}=R-{\rm mod}$ for a commutative semisimple algebra $R$
over $\Bbb C$, then ${\mathcal{M}}$ defines a tensor functor 
$F_{\mathcal{M}}: {\mathcal{C}}\to R-{\rm bimod}$ (cf. \cite{EGNO}, Chapter 7). 
Then the algebra $A:=F_{\mathcal{M}}(S_qV)$ is 
a Frobenius $(h-m,m)$-Koszul algebra over $\Bbb C$, with quadratic dual 
$A^!=F_{\mathcal{M}}(\wedge_qV)$, which is Frobenius $(m,h-m)$-Koszul; this 
follows from the above, since the functor $F_{\mathcal{M}}$ is monoidal and exact.   
This gives examples of Frobenius $(r,s)$-Koszul algebras over $\Bbb C$ 
for any $r,s\ge 2$. 

This construction makes sense modulo $p$ if $h,m$ are coprime to $p$,
giving examples of $(h-m,m)$-Koszul algebras over a field $\kk$ of any characteristic $p$ not dividing $h$
(as for $h$ coprime to $p$, either $m$ or $h-m$ is coprime to $p$). 
Also, it admits a reduction mod $p$ when $h=p$, 
with the category ${\rm Ver}(q,{\bold{SL}}(m))$ replaced by the symmetric tensor
category $\overline{\mathcal{C}}={\rm Ver}_p({\bold{SL}}(m))$
(see \cite{GK, GM}, and \cite{O}, 4.3.2), and the algebras $S_qV, \wedge_qV$ 
replaced with the usual symmetric and exterior algebras $SV$, $\wedge V$.  
Thus, given any semisimple indecomposable ${\mathcal{C}}$-module category ${\mathcal{M}}$,
we obtain a Frobenius $(p-m,m)$-Koszul algebra $A=F_{\mathcal{M}}(SV)$ over $\kk$, with quadratic dual 
$A^!=F_{\mathcal{M}}(\wedge V)$, which is Frobenius $(m,p-m)$-Koszul.
In particular, in this case there is a tensor functor $F: {\mathcal{C}}\to {\rm Ver}_p$ defined in \cite{O}, 4.3.3, 
such that $F(V)=L_m$. Thus, $F(SV)=SL_m$, $F(\wedge V)=\wedge L_m$, 
the almost Koszul algebras considered in this section. Moreover, 
for ${\mathcal{M}}={\rm Ver}_p$, we have 
$F_{\mathcal{M}}=\overline{F}_{\mathcal{M}} \circ F$, 
where 
$$
\overline{F}_{\mathcal{M}}: {\rm Ver}_p\to R-{\rm bimod}
$$
(the functor attached to ${\rm Ver}_p$ as a module over itself). 
So $A=\overline{F}_{\mathcal{M}}(SL_m)$ and $A^!=\overline{F}_{\mathcal{M}}(\wedge L_m)$. 

\section{The PBW theorem in ${\rm Ver}_p$} 

In this section we will prove the PBW theorem in ${\rm Ver}_p$. 
We may assume that ${\rm char}(\kk)=p\ge 5$. 

\subsection{Auxiliary results on the Lie operad and representations of $S_p$}
Let $p\ge 3$. Let $R_p$ be the $p-2$-dimensional irreducible representation 
of $S_p$ over $\kk$, on the space of functions 
on $[1,p]$ with zero sum of values modulo constants. 
This representation will play a key role in the whole story. 

\begin{lemma}\label{coinva} 
We have $(R_p\otimes {\bf Lie}_p)_{S_p}\cong \kk$. 
\end{lemma}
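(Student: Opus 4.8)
The plan is to reduce the statement to an explicit computation with the Dynkin element and then to bound $\dim_\kk (R_p\otimes {\bf Lie}_p)_{S_p}$ from both sides. First I would upgrade Lemma \ref{env} to arbitrary coefficients: for any $\kk S_p$-module $X$ the untwisting argument proving that lemma works verbatim (the diagonal module $X\otimes \kk S_p$ is free, its coinvariants are $X$, and the antipode carries ${\rm Ann}_l(\theta_p)$ to ${\rm Ann}_r(\theta_p^\vee)$), giving a natural isomorphism
$$
(X\otimes {\bf Lie}_p)_{S_p}\cong X/{\rm Ann}_r(\theta_p^\vee)X .
$$
Applying this with $X=R_p$ reduces Lemma \ref{coinva} to showing $\dim_\kk\bigl(R_p/{\rm Ann}_r(\theta_p^\vee)R_p\bigr)=1$.

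For the upper bound I would exploit the factorization $\theta_p^\vee=(\cdots)({\rm Id}-(12))$. Since $({\rm Id}-(12))({\rm Id}+(12))=0$, the element ${\rm Id}+(12)$ lies in ${\rm Ann}_r(\theta_p^\vee)$, hence $({\rm Id}+(12))R_p\subseteq {\rm Ann}_r(\theta_p^\vee)R_p$. As $(12)$ is an involution and ${\rm char}(\kk)=p\ge 5$, the rank of ${\rm Id}+(12)$ on $R_p$ equals $\dim R_p^{(12)}$. I would compute this as $p-3$ using the filtration $\bold 1\subset M_0\subset M$ of the permutation module $M=\kk^{[1,p]}$: fixed points under the $p'$-element $(12)$ form an exact functor, and $\dim M^{(12)}=p-1$ by counting the orbits of $\langle(12)\rangle$ on $[1,p]$. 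Thus ${\rm Ann}_r(\theta_p^\vee)R_p$ has dimension $\ge p-3=\dim R_p-1$, so the quotient has dimension $\le 1$.

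For the lower bound I would return to $M={\rm Ind}_{S_{p-1}}^{S_p}\bold 1$, with $M_0/\bold 1\cong R_p$ and $M/M_0\cong \bold 1$. The projection formula together with the stated isomorphism ${\rm Res}_{S_{p-1}}{\bf Lie}_p\cong \kk S_{p-1}$ gives $(M\otimes {\bf Lie}_p)_{S_p}=({\rm Res}_{S_{p-1}}{\bf Lie}_p)_{S_{p-1}}=(\kk S_{p-1})_{S_{p-1}}=\kk$. On the other hand the same ${\rm Id}+(12)$ trick shows ${\rm Ann}_r(\theta_p^\vee)$ acts on the trivial module through a unit (its augmentation is $2\ne 0$), so $(\bold 1\otimes {\bf Lie}_p)_{S_p}=0$. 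Writing $F:=(-\otimes {\bf Lie}_p)_{S_p}$ and applying right exactness of $F$ to the two short exact sequences above, the vanishing $F(\bold 1)=0$ forces $F(M_0)\xrightarrow{\ \sim\ }F(R_p)$, while exactness at $F(M)$ shows $F(M_0)\twoheadrightarrow F(M)=\kk$. Hence $F(R_p)\cong F(M_0)\ne 0$, giving the matching bound $\dim\ge 1$, and the two inequalities pinch to $\dim(R_p\otimes {\bf Lie}_p)_{S_p}=1$.

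The step I expect to be the real obstacle is the lower bound, i.e.\ proving that the coinvariants do not collapse. Because $(-)_{S_p}$ is only right exact, the two trivial composition factors of $M$ could a priori contribute, and one cannot simply read the answer off from $(M\otimes {\bf Lie}_p)_{S_p}=\kk$. The device that resolves this is the vanishing $(\bold 1\otimes {\bf Lie}_p)_{S_p}=0$ supplied by the ${\rm Id}+(12)$ annihilator trick: it kills both trivial factors and identifies $F(R_p)$ with $F(M_0)$, a nonzero quotient of $F(M)=\kk$. A pleasant byproduct of the pincer between the two bounds is that $H_1(S_p,{\bf Lie}_p)=0$.
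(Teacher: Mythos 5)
Your proof is correct, but the route differs from the paper's in an interesting way, most notably in the upper bound. The lower bound is essentially the paper's: Etingof also observes that $(P_p\otimes{\bf Lie}_p)_{S_p}=({\bf Lie}_p)_{S_{p-1}}=\kk$ by Frobenius reciprocity and that the two trivial composition factors of the permutation module contribute nothing, so $\dim(R_p\otimes{\bf Lie}_p)_{S_p}\ge 1$; you merely spell out the right-exactness bookkeeping, and you justify $({\bf Lie}_p)_{S_p}=0$ via the augmentation of ${\rm Id}+(12)\in{\rm Ann}_r(\theta_p^\vee)$ where the paper instead quotes the vanishing of the degree-$p$ part of the free Lie algebra on one generator (both are fine). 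The upper bound is where you genuinely diverge: the paper uses the surjection ${\rm Ind}_{S_{p-1}}^{S_p}{\bf Lie}_{p-1}\twoheadrightarrow{\bf Lie}_p$ together with Klyachko's theorem ${\bf Lie}_{p-1}={\rm Ind}_{\Bbb Z/(p-1)}^{S_{p-1}}\chi_{p-1}$ and the fact that the reflection representation contains each nontrivial character of $\Bbb Z/(p-1)$ once, whereas you stay entirely inside the Dynkin-element picture, noting that ${\rm Id}+(12)$ right-annihilates $\theta_p^\vee$ and that its image on $R_p$ already has codimension $1$. Your version is more elementary (no Klyachko, no cyclic characters) and makes the identification with $R_p/{\rm Ann}_r(\theta_p^\vee)R_p$ explicit, which meshes nicely with how Lemma \ref{vanlemma} is proved; the paper's version has the advantage of not requiring the (easy but unstated) extension of Lemma \ref{env} to arbitrary coefficient modules. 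One cosmetic point: you assume $p\ge 5$, but only ${\rm char}(\kk)\ne 2$ is needed for the eigenspace splitting under $(12)$, so your argument covers the stated range $p\ge 3$; and your parenthetical byproduct $H_1(S_p,{\bf Lie}_p)=0$ needs the additional observation (via Shapiro's lemma) that $H_1(S_p,P_p\otimes{\bf Lie}_p)=H_1(S_{p-1},\kk S_{p-1})=0$ before it follows from the long exact sequence.
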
  

\begin{proof} 
Note that ${\bf Lie}_p(\kk)_{S_p}=0$, since this is the degree 
$p$ component of the free Lie algebra in one generator, which is zero.  
Let $P_p$ be the permutation representation of $S_p$,
of dimension $p$, i.e., $P_p={\rm Ind}_{S_{p-1}}^{S_p}\kk.$ 
By Frobenius reciprocity,  $(P_p\otimes {\bf Lie}_p)_{S_p}={\bf Lie}_p(\kk)_{S_{p-1}}
=\kk$, as ${\bf Lie}_p(\kk)$ is the regular representation of $S_{p-1}$. 
Since $P_p$ is the extension of $R_p$ by $\kk$ on both sides, 
we have $\dim (R_p\otimes {\bf Lie}_p)_{S_p}\ge 1$. 

Also, we have a natural surjective homomorphism ${\rm Ind}_{S_{p-1}}^{S_p}{\bf Lie}_{p-1}\to {\bf Lie}_p(\kk)$, hence a surjective homomorphism 
$$
(R_p\otimes {\rm Ind}_{S_{p-1}}^{S_p}{\bf Lie}_{p-1})_{S_p}=
(R_p\otimes {\bf Lie}_{p-1})_{S_{p-1}}\to (R_p\otimes {\bf Lie}_p)_{S_p}.
$$
By Klyachko's theorem (see \cite{Re}, Corollary 8.7), for $n<p$ we have 
${\bf Lie}_n={\rm Ind}_{\Bbb Z/n}^{S_n}\chi_n$, 
where $\chi_n$ is a primitive (=injective) character
of $\Bbb Z/n$. Hence, 
$$
(R_p\otimes {\bf Lie}_{p-1})_{S_{p-1}}=
(R_p\otimes \chi_{p-1})_{\Bbb Z/(p-1)}. 
$$
But $R_p$ is the reflection representation of $S_{p-1}$, so contains every nontrivial character of $\Bbb Z/(p-1)$ with multiplicity $1$. 
Hence, 
$$
\dim (R_p\otimes \chi_{p-1})_{\Bbb Z/(p-1)}=\dim (R_p\otimes {\bf Lie}_{p-1})_{S_{p-1}}=1
$$
 and thus $\dim (R_p\otimes {\bf Lie}_p)_{S_p}\le 1$. 
Altogether we get 
$$
\dim (R_p\otimes {\bf Lie}_p)_{S_p}=1,
$$
 as desired. 
\end{proof} 

\begin{lemma}\label{vanlemma} 
The natural map 
$$
\zeta: (R_p\otimes{\bf Lie}_p)_{S_p}\to 
(R_p\otimes{\bf Assoc}_p)_{S_p}=R_p
$$ 
is zero. 
\end{lemma}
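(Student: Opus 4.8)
The plan is to reduce the vanishing of $\zeta$ to the statement that the Dynkin element $\theta_p$ annihilates $R_p$, and then to prove the latter by a short induction in which the characteristic enters only at the very end. First I would identify ${\bf Assoc}_p=\kk S_p$ and note that the isomorphism $(R_p\otimes {\bf Assoc}_p)_{S_p}\cong R_p$ of the statement is the ``untwisting'' map $\bar\Psi\colon [v\otimes\sigma]\mapsto \sigma^{-1}v$ (the assignment $v\otimes\sigma\mapsto\sigma^{-1}v$ is constant on $S_p$-orbits, and a dimension count shows $\bar\Psi$ is an isomorphism). Using the realization ${\bf Lie}_p=\kk S_p\cdot\theta_p$ as a left ideal of ${\bf Assoc}_p$, the surjection $\kk S_p\xrightarrow{\,x\mapsto x\theta_p\,}{\bf Lie}_p$ induces a surjection $R_p\cong(R_p\otimes\kk S_p)_{S_p}\twoheadrightarrow (R_p\otimes{\bf Lie}_p)_{S_p}$, so every element of the (one-dimensional) source is of the form $[v\otimes\theta_p]$. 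Tracing through $\bar\Psi$ gives $\zeta[v\otimes\theta_p]=S(\theta_p)\,v=\theta_p^\vee v$, hence ${\rm im}\,\zeta=\theta_p^\vee\cdot R_p$. Since every representation of $S_p$ is self-dual, $\theta_p^\vee R_p=0$ if and only if $\theta_p R_p=0$, so it suffices to prove that $\theta_p$ acts as zero on $R_p$.

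To compute $\theta_p$ on $R_p$ I would realize $R_p=P_p^0/\kk w$, where $P_p=\bigoplus_j\kk e_j$ is the permutation module, $P_p^0$ its sum-zero subspace, and $w=e_1+\cdots+e_p$. Writing $\tau_k=(k\ k{-}1\ \cdots\ 1)$ and using the factorization $\theta_m=\theta_{m-1}({\rm Id}-\tau_m)$ with $\theta_{m-1}\in\kk S_{m-1}$ (which kills $e_i$ for $i\ge m$), the key lemma is the identity, valid over $\ZZ$,
$$\theta_m e_j=(-1)^{j-1}\binom{m-1}{j-1}(e_1-e_2)\quad(1\le j\le m),\qquad \theta_m e_i=0\ (i>m).$$
This follows by induction on $m$: the base case $\theta_2 e_1=e_1-e_2$ is immediate, and the inductive step applies $\theta_{m-1}$ to $({\rm Id}-\tau_m)e_1=e_1-e_m$ and $({\rm Id}-\tau_m)e_j=e_j-e_{j-1}$ $(j\ge 2)$, the two binomial coefficients combining by Pascal's rule.

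Specializing to $m=p$ gives $\theta_p e_j=(-1)^{j-1}\binom{p-1}{j-1}(e_1-e_2)$ for all $j$. Now the characteristic enters: the congruence $\binom{p-1}{j-1}\equiv(-1)^{j-1}\pmod p$ yields $\theta_p e_j=e_1-e_2$ for every $j$, so $\theta_p\bigl(\sum_j v_j e_j\bigr)=\bigl(\sum_j v_j\bigr)(e_1-e_2)$. This is zero on $P_p^0$, so $\theta_p$ acts as zero on the subquotient $R_p$; by the self-duality reduction above, $\zeta=0$.

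I expect the main obstacle to be organizing the two computations cleanly — the reduction of $\zeta$ to multiplication by $\theta_p^\vee$, and the inductive binomial identity for $\theta_m$ on $P_m$ — and in particular making sure the conventions (left versus right ideal, and the antipode in $\bar\Psi$) are consistent. Conceptually, the whole argument turns on the collapse $\binom{p-1}{j-1}\equiv(-1)^{j-1}$, i.e.\ on the degeneracy $\theta_p^2=p\,\theta_p=0$ in characteristic $p$; over a field of characteristic zero the coefficients $\binom{p-1}{j-1}$ are distinct, $\theta_p$ does not annihilate $P_p^0$, and $\zeta$ is nonzero, so no version of the argument can bypass the use of ${\rm char}(\kk)=p$.
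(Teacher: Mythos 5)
Your proposal is correct and follows essentially the same route as the paper: reduce $\zeta$ to multiplication by $\theta_p^\vee$ via the realization ${\bf Lie}_p=\kk S_p\cdot\theta_p$, then show the Dynkin element kills $R_p$ by an inductive binomial computation whose punchline is the congruence $\binom{p-1}{j}\equiv(-1)^j\pmod p$. The only (cosmetic) differences are that you track $\theta_m$ on basis vectors of the permutation module by forward induction while the paper tracks partial products on the coordinates of a sum-zero vector by reverse induction, and that you make explicit the self-duality step the paper leaves implicit.
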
 

\begin{proof}  
We have 
$$
\zeta(v\otimes \sigma \theta_p)=\theta_p^\vee\sigma^{-1} v
$$ 
for $\sigma\in S_p$ (see Subsection \ref{compenv}). 

But $\theta_p$ (and hence $\theta_p^\vee$) acts by zero on $R_p$. Indeed, suppose $v=(x_1,...,x_p)$ with $\sum x_i=0$ 
(considered modulo shifts of all coordinates by the same number). 
Then we can prove by reverse induction in $j$ that for any $2\le j\le p$
$$
(({\rm Id}-(j...1))...({\rm Id}-(p...1))v)_k=\sum_{i=0}^{p+1-j} (-1)^{i-1}\binom{p+1-j}{i}x_{i+k}
$$
for $k=1,...,j$, while the remaining coordinates are zero. 
In particular, taking $j=2$, we get $\theta_p v=(a,-a,0,...,0)$, where 
$$
a=\sum_{i=0}^{p-1} (-1)^{i-1}\binom{p-1}{i}x_{i+1}.
$$
Since we are in characteristic $p$, this implies that  $a=\sum_{i=0}^{p-1} x_{i+1}=0$,
as desired. 
\end{proof} 

\begin{lemma}\label{L2p} For $p\ge 3$ we have 
$$
L_2^{\otimes p}=\oplus_{\ell=1}^{\frac{p-3}{2}}Q_\ell\otimes L_{2\ell}\oplus R_p\otimes \bold 1_-,
$$
where $Q_\ell$ are projective $\kk S_p$-modules.   
\end{lemma}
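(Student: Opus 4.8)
The plan is to transfer the problem to the non-semisimple category ${\rm Rep}_{\kk}(\Bbb Z/p)$, of which ${\rm Ver}_p$ is the semisimplification, and there to decompose a concrete permutation module. Let $\mathcal S\colon {\rm Rep}_{\kk}(\Bbb Z/p)\to {\rm Ver}_p$ be the semisimplification functor; it is symmetric monoidal, sends the indecomposable $J_m$ of dimension $m$ to $L_m$ for $1\le m\le p-1$, and kills the free module $J_p$. Writing $L_2^{\otimes p}=\bigoplus_d M_d\otimes L_d$, where $M_d=\Hom(L_d,L_2^{\otimes p})$ is a $\kk S_p$-module via the permutation action of $S_p$ on the tensor factors (which is by morphisms in ${\rm Ver}_p$), my goal is to show that $M_d=0$ for $d$ odd, that $M_{p-1}\cong R_p$, and that $M_{2\ell}$ is projective for $1\le \ell\le \frac{p-3}2$. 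Since $\mathcal S$ is a symmetric tensor functor it intertwines the $S_p$-actions on $p$-th tensor powers, so $\mathcal S(J_2^{\otimes p})\cong L_2^{\otimes p}$ as $S_p$-equivariant objects. Decomposing $J_2^{\otimes p}=\bigoplus_{d=1}^p J_d\otimes W_d$ over the diagonal $\Bbb Z/p=\langle g\rangle$ (which commutes with $S_p$), the multiplicity spaces $W_d$ are well-defined $\kk S_p$-modules, e.g.\ as the socle layers $(\ker x\cap \operatorname{im}x^{d-1})/(\ker x\cap\operatorname{im}x^{d})$ for $x=g-1$; applying $\mathcal S$ then gives $M_d=W_d$ for $d\le p-1$ while $W_p$ is discarded. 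This reduces everything to the single $\Bbb Z/p\times S_p$-module $J_2^{\otimes p}$.

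Next I would set up an explicit model. Choosing the basis of $J_2$ in which $g$ is unipotent, one identifies $J_2^{\otimes p}$, equivariantly for $\langle g\rangle\times S_p$, with the permutation module $\kk[B_p]=\bigoplus_{T\subseteq[1,p]}\kk\,e_T$ on the Boolean lattice, where $S_p$ permutes subsets through its action on $[1,p]$ and $x=g-1$ acts as the strict upward sum $x\,e_T=\sum_{T'\supsetneq T}e_{T'}$. Thus $W_d$ is the $S_p$-equivariant multiplicity of the size-$d$ Jordan block of this combinatorial operator. The vanishing $M_d=0$ for odd $d$ is the quickest point: since ${\rm Ver}_p={\rm Ver}_p^+\boxtimes{\rm Supervec}$ and $L_2\cong L_{p-2}\boxtimes \bold 1_-$ is an odd object, $L_2^{\otimes p}$ lies in the odd component (as $p$ is odd), whose simple constituents are exactly the even-indexed $L_{2\ell}$ and $\bold 1_-=L_{p-1}$. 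The dimensions are equally routine: counting length-$p$ walks from $1$ in the truncated Bratteli graph $A_{p-1}$ gives $\dim M_d$, and in particular $\dim M_{p-1}=p-2=\dim R_p$, while $\dim M_{2\ell}$ is divisible by $p$, consistent with projectivity.

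The substance of the argument is the precise $\kk S_p$-module structure. For projectivity of the interior multiplicities I would invoke the Sylow criterion: a $\kk S_p$-module is projective if and only if its restriction to the $p$-cycle $\langle c\rangle$ is free. This forces one to understand $\kk[B_p]$ as a module over the rank-two elementary abelian group $\langle g\rangle\times\langle c\rangle\cong(\Bbb Z/p)^2$, where the diagonal unipotent $g$ and the cyclic permutation $c$ are two genuinely different copies of $\Bbb Z/p$, and to show that every $W_d$ with $d\le p-2$ is free over $\langle c\rangle$. For the top term I would identify $W_{p-1}$ with $R_p$ by realizing it as the canonical $(p-2)$-dimensional subquotient of the permutation module $P_p={\rm Ind}_{S_{p-1}}^{S_p}\kk$; equivalently, this is the statement that the non-projective part of the $p$-th power $L_2^{\otimes p}$ is $R_p\otimes L_2^{(1)}$ with Frobenius twist $L_2^{(1)}\cong\bold 1_-$.

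I expect the main obstacle to be exactly this last step: separating, inside $\kk[B_p]$, the projective $\kk S_p$-summands from the unique non-projective summand $R_p\otimes\bold 1_-$, because the strict-up operator $x$ and the cyclic permutation $c$ do not commute and the two group actions interact nontrivially. Concretely, pinning down the isomorphism type of each $W_d$ rather than merely its dimension will require either a Brauer-character computation matched against the walk counts, or the structural input that the Frobenius-twist functor extracts precisely the $R_p$-isotypic piece of a $p$-th tensor power. The divisibility of the interior dimensions by $p$ is only a necessary check and not sufficient for projectivity, so this is where the real work lies.
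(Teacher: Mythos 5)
Your setup is sound and several of your reductions coincide with the paper's: the parity argument showing only even-indexed simples occur, the Sylow criterion reducing projectivity of $Q_\ell$ to freeness of its restriction to the $p$-cycle $\langle c\rangle$, and the dimension count $\dim Q_{\frac{p-1}{2}}=p-2$. But as you yourself acknowledge in your last paragraph, the two load-bearing claims are left unproved: (a) that $W_{2\ell}|_{\langle c\rangle}$ is actually free for $\ell\le\frac{p-3}{2}$, and (b) that $W_{p-1}$ is isomorphic to $R_p$ as a $\kk S_p$-module rather than merely having the right dimension. Divisibility of $\dim W_{2\ell}$ by $p$ does not give (a), and a dimension count does not give (b) since, e.g., $\kk^{p-2}$ or other $(p-2)$-dimensional modules are not excluded. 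A proposal that names the hard steps and defers them ("this is where the real work lies") is a gap, not a proof.

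For comparison, the paper closes these two gaps as follows. For (a), after the same Sylow reduction it observes that freeness of $Q_\ell|_{\langle c\rangle}$ for all interior $\ell$ is exactly the statement that the Frobenius functor of $L_2$ satisfies ${\rm Fr}(L_2)={\bold 1}_-\boxtimes L_{p-2}$ in ${\rm Ver}_p\boxtimes{\rm Ver}_p$, which is an already-established computation (\cite{O}, Example 3.8); this is precisely the ``structural input'' you gesture at but do not supply. For (b), instead of a Brauer-character computation it restricts along $S_{p-2}\subset S_p$: since $L_2^{\otimes r}$ for $r\le p-2$ obeys the classical Clebsch--Gordan rule, one computes $Q_{\frac{p-1}{2}}|_{S_{p-2}}\cong P_{p-2}$ (the permutation module of $S_{p-2}$), which pins down $Q_{\frac{p-1}{2}}\cong R_p$. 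Your combinatorial model of $J_2^{\otimes p}$ as the Boolean-lattice permutation module with the strict up/down-sum operator is an appealing reformulation, but by your own admission the interaction between that operator and the $p$-cycle is exactly what you have not controlled, so the proposal as written does not establish the lemma.
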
 

\begin{proof} The case $p=3$ is easy, so we may assume 
that $p\ge 5$. Since $p$ is odd, $L_2^{\otimes p}$ is a direct sum of $L_{2\ell}$, $1\le \ell\le \frac{p-1}{2}$,  
tensored with some multiplicity spaces $Q_\ell$, which are representations of $S_p$. 

For $r\le p-2$, $L_2^{\otimes r}$ is computed by the usual 
Clebsch-Gordan rule. So, ${\rm Hom}(L_{p-3},L_2^{\otimes p-2})=P_{p-3}$
(the reflection representation of $S_{p-2}$), while 
${\rm Hom}(L_{p-1},L_2^{\otimes p-2})=\kk$. Multiplying this 
by $L_2^{\otimes 2}={\bold 1}\oplus L_3$, we get $Q_{\frac{p-1}{2}}|_{S_{p-2}}=P_{p-3}+\kk=P_{p-2}$. 
Thus, $Q_{\frac{p-1}{2}}=R_p$. 

It remains to show that $Q_\ell$ is projective if $\ell\le \frac{p-3}{2}$. 
Since $\Bbb Z/p$ is a $p$-Sylow subgroup of $S_p$, 
it suffices to show that the restriction of $Q_\ell$ 
to $\Bbb Z/p$ is projective. This, in turn, is equivalent to 
this restriction being free or, equivalently, negligible. 
In other words, our job is to show that the Frobenius functor 
of $L_2$ in the sense of \cite{O}, Section 3 is 
${\rm Fr}(L_2)=\bold 1_-\boxtimes L_{p-2}\in {\rm Ver}_p\boxtimes {\rm Ver}_p$. 
But this is shown in \cite{O}, Example 3.8.    
\end{proof}

\subsection{The PBW theorem (preliminary version)}

Let $L$ be an operadic Lie algebra in ${\rm Ver}_p$. 
Suppose that $L$ is graded by nonnegative integers, 
$L=\oplus_{i\ge 0}L[i]$. We will call this grading {\it the external grading}. 

We first prove a preliminary version of the PBW theorem. 

\begin{theorem}\label{nol2}
Suppose that for some integer $r\ge 1$, we have \linebreak ${\rm Hom}(L_2,L[i])=0$ for $i<r$. 
Then the natural map $SL\to {\rm gr}U(L)$ is an isomorphism 
in external degrees $<pr$. 

In particular, if $L$ is any operadic Lie algebra in ${\rm Ver}_p$ 
with \linebreak ${\rm Hom}(L_2,L)=0$ (e.g., $L\in {\rm Ver}_p^+$) 
then $L$ is PBW and hence a Lie algebra.  
\end{theorem}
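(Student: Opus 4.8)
The plan is to reduce the PBW property to flatness of a formal deformation, exactly as in the proof of Theorem~\ref{PBWt}, but to replace the Koszul deformation principle by the explicit internal Ext computation in ${\rm Ver}_p$. First I would set $\widehat{L}:=L\oplus\bold 1$ and form the quadratic-linear deformation $U_\hbar(L)$ of $S\widehat{L}$, so that $L$ is PBW if and only if $U_\hbar(L)$ is flat. The external grading on $L$ induces compatible external gradings on $U_\hbar(L)$, on $S\widehat{L}$ and on ${\rm gr}U(L)$, and the map $\eta$ preserves them; hence it suffices to establish flatness in each external degree $<pr$ separately. Because $\widehat{L}$ contains the non-Koszul objects $L_m$, Theorem~\ref{KdP} does not apply directly, and the whole point is to control the extra obstructions caused by non-Koszulity.

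The key step is to locate those obstructions. Extending the classical theory of quadratic-linear (PBW) deformations (cf.\ \cite{PP}) to symmetric tensor categories, flatness of such a deformation of $S\widehat{L}$ is detected by a single Jacobi-type obstruction valued in the internal Ext group $\underline{\rm Ext}^3_{S\widehat{L}}(\bold 1,\bold 1)$; classes of homological degree $\ge 4$ record higher syzygies and are invisible to flatness. By Corollary~\ref{coro1}, this $\underline{\rm Ext}^3$ decomposes as $\wedge^3\widehat{L}^*$, sitting in diagonal degree $3$, together with exactly one extra summand built from ${\rm Hom}(L_2,\widehat{L})$ and sitting in diagonal degree $p$: indeed $L_m$ contributes to $\underline{\rm Ext}^{m+1}$, so $m+1=3$ forces $m=2$, and the remaining non-Koszul objects $L_m$ with $m\ge 3$ feed only into $\underline{\rm Ext}^{\ge 4}$.

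I would then dispatch the two surviving pieces. The diagonal-degree-$3$ piece is the classical Jacobi obstruction, which vanishes automatically for $p\ge 5$, exactly as in the proof of Theorem~\ref{PBWt}. The diagonal-degree-$p$ piece is the $p$-Jacobi obstruction $\beta^L|_{E_p(L)}$. Using $E_p(V)={\rm Hom}(L_2,V)^{(1)}\otimes{\bold 1}_-$ together with additivity (Proposition~\ref{addit}), I would write $E_p(L)=\bigoplus_i {\rm Hom}(L_2,L[i])^{(1)}\otimes{\bold 1}_-$, with the $i$-th summand sitting in external degree $pi$ since the Frobenius twist multiplies external degrees by $p$. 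Under the hypothesis ${\rm Hom}(L_2,L[i])=0$ for $i<r$, every summand with $i<r$ vanishes, so $E_p(L)$ lives in external degrees $\ge pr$ and the $p$-Jacobi obstruction is zero in external degrees $<pr$. Thus both obstructions vanish there and $U_\hbar(L)$ is flat in external degrees $<pr$, giving the first assertion. For the last sentence: if ${\rm Hom}(L_2,L)=0$ the hypothesis holds for every $r$, so flatness holds in all degrees, whence $L$ is PBW and therefore a Lie algebra; and any $L\in{\rm Ver}_p^+$ has no $L_2$-isotypic part, so indeed ${\rm Hom}(L_2,L)=0$.

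The hard part will be the obstruction-theoretic input of the second paragraph: making precise, in the categorical setting, that flatness is governed solely by $\underline{\rm Ext}^3$ and, in particular, that the higher non-Koszul classes arising from $L_m$ with $m\ge 3$ never obstruct it. A more hands-on alternative would avoid this by verifying distributivity of the lattice of subobjects of $\widehat{L}^{\otimes d}$ degree by degree: since the homology of $K^\bullet(\widehat{L})$ is concentrated in diagonal degree $p$ (Proposition~\ref{Koszulhom}, Corollary~\ref{kun}) and, by the K\"unneth decomposition over the external grading, occurs only in external degrees that are multiples of $p$, one would track precisely where the $L_2$-isotypic part enters and reduce again to the vanishing ${\rm Hom}(L_2,L[i])=0$ for $i<r$.
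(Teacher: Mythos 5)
Your proposal correctly isolates the two essential points -- that the only non-classical contribution to the relevant Ext group comes from ${\rm Hom}(L_2,\widehat L)$ in diagonal degree $p$, and that the Frobenius twist pushes its external degree up to $pi\ge pr$ -- but the step you yourself flag as ``the hard part'' is a genuine gap, not a technicality. The assertion that flatness of the quadratic-linear deformation $U_\hbar(L)$ of $S\widehat L$ is governed by a single obstruction in $\underline{\rm Ext}^3_{S\widehat L}(\bold 1,\bold 1)$, with classes in homological degree $\ge 4$ ``invisible to flatness,'' is precisely the Braverman--Gaitsgory/Koszul-deformation-principle statement, and it is only a theorem when $S\widehat L$ is Koszul -- which is exactly what fails here. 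Without a proof of this claim for non-Koszul algebras (and it is not clear it holds in the form you state), the argument does not close. The paper sidesteps this entirely by working on the Koszul-dual side: $B=U_\hbar(L)^!$ is a semidirect product of $\wedge L^*[[\hbar]]$ with an odd derivation, hence \emph{automatically} a flat deformation of $B_0=\wedge(L^*\oplus\bold 1)$; then $U_\hbar(L)=\underline{\rm Ext}_B(\bold 1,\bold 1)_{\rm diagonal}$ by \cite{PP}, Proposition 3.1, and the deformation spectral sequence starting from $\underline{\rm Ext}_{B_0}(\bold 1,\bold 1)=S(L\oplus\bold 1)\oplus(\text{off-diagonal part})$ can destroy flatness only via differentials from bidegree $(i-1,i)$ into the diagonal $(i,i)$. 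By Corollary \ref{coro1} the source of all such differentials is the $A_0$-module generated by ${\rm Hom}(L_2,L)^{(1)}\otimes\bold 1_-$ in diagonal degree $p$, which vanishes in external degree $<pr$. This replaces the unproved obstruction-theoretic input by an explicit computation plus a general fact about quadratic duals.

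Two smaller points. First, you invoke $E_p(V)={\rm Hom}(L_2,V)^{(1)}\otimes\bold 1_-$, but in the paper this is Corollary \ref{Epcomp}, whose proof \emph{uses} Theorem \ref{nol2}; as written your argument is circular. The circularity is avoidable, since for your purposes you only need the location (external degree $\ge pr$) of the off-diagonal degree-$p$ Ext classes, which follows from Corollary \ref{coro1} alone without any identification with $E_p(L)$ or with $\beta^L$ -- indeed, for this theorem (unlike Theorem \ref{maint}) one never needs to identify the obstruction map, only to show its source vanishes in the relevant degrees. Second, your fallback suggestion of checking distributivity of the lattice of subobjects degree by degree is again exactly what non-Koszulity obstructs, so it does not obviously repair the gap without further work.
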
 

\begin{proof}
As in the proof of Theorem \ref{PBWt}, consider the quadratic algebra 
$A:=U_\hbar(L)$. Its quadratic dual $A^!$ is the semidirect product 
of $\wedge L^*[[\hbar]]$ with the algebra generated by an odd derivation 
$d$ such that $d|_{L^*}=\hbar[,]^*: L^*\to \wedge^2L^*$, and it is a flat deformation of 
$B_0=\wedge (L^*\oplus \bold 1)$. Thus, $A=\underline{\rm Ext}_B(\bold 1,\bold 1)_{\rm diagonal}$ (see \cite{PP}, Proposition 3.1). So the associated graded of $A$ 
under the $\hbar$-adic filtration is computed by the spectral sequence attached to the deformation, starting with $A_0=A/\hbar A=S(L\oplus \bold 1)=\underline{\rm Ext}_{B_0}(\bold 1,\bold 1)$. 
 
The relevant differentials of this spectral sequence act from degree $(i-1,i)$ to degree 
$(i,i)$, where the first coordinate is the homological degree. Thus, 
nontrivial differentials can exist only for those $i$ for which 
$\underline{\rm Ext}_{B_0}^{i-1}(\bold 1,\bold 1)[i]\ne 0.$ 
By  Corollary \ref{coro1}, this can happen only if 
$\Hom(L_2,L)\ne 0$, and the smallest 
external degree where it can potentially 
happen is $pr$. This proves the first statement. 

The second statement follows from the first one 
by taking the trivial grading (i.e., 
the whole $L$ sits in degree zero) and considering all $r\ge 1$.
\end{proof} 

\begin{corollary}\label{Epcomp} 
(i) $E_p(L_m)=0$ if $m\ne 2$;

(ii) $E_p(L_2)=\bold 1_-$; 

(iii) For $V\in {\rm Ver}_p$, $E_p(V)={\rm Hom}(L_2,V)^{(1)}\otimes \bold 1_-=H^2(K^\bullet(V))=\underline{\rm Ext}^{p-1}_{\wedge V^*}(\bold 1,\bold 1)[p]$. 
\end{corollary}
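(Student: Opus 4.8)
The plan is to compute $E_p(L_m)$ for every simple object $L_m$ first and then obtain the general formula (iii) by additivity. Since $E_p(V)={\rm Ker}\,\phi_p^V$ is built functorially from $V\mapsto V^{\otimes p}$, scaling a morphism by $\lambda$ scales it by $\lambda^p$, so $E_p$ is a twisted $\kk$-linear additive functor (exactly as in the computation $E_2(V)=V^{(1)}$); combined with Proposition \ref{addit} this gives $E_p(V)=\bigoplus_m {\rm Hom}(L_m,V)^{(1)}\otimes E_p(L_m)$. Granting (i) and (ii), this collapses to $E_p(V)={\rm Hom}(L_2,V)^{(1)}\otimes \bold 1_-$, and the two remaining identifications in (iii) are immediate: the middle term is Corollary \ref{kun} specialized to $m=2$ (using $\bold 1_-^{\otimes 3}=\bold 1_-$), and the right-hand term is the single new generator of lowest homological degree in Corollary \ref{coro1}. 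So the whole statement reduces to the two computations (i) and (ii).

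For (i), with $m\ne 2$, I would apply Theorem \ref{nol2} to the free operadic Lie algebra $L={\rm FOLie}(L_m)$, graded by its intrinsic degree, for which $U(L)=TL_m$. In external degree $n$ the degree-one component of the PBW map $SL\to {\rm gr}\,U(L)$ is the natural surjection ${\rm FOLie}_n(L_m)\to {\rm FLie}_n(L_m)$ with kernel $E_n(L_m)$, so it suffices to make $SL\to {\rm gr}\,U(L)$ an isomorphism in external degrees $\le p$; this forces $\phi_p^{L_m}$ to be injective, i.e. $E_p(L_m)=0$. Now ${\rm FOLie}_1(L_m)=L_m$ contains no copy of $L_2$ because $m\ne 2$, and ${\rm FOLie}_2(L_m)=\wedge^2L_m$ is a subquotient of $L_m^{\otimes 2}$, which has even parity, whereas $L_2$ is odd; thus ${\rm Hom}(L_2,{\rm FOLie}_i(L_m))=0$ for $i=1,2$. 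Theorem \ref{nol2} with $r=3$ then makes $SL\to {\rm gr}\,U(L)$ an isomorphism in external degrees $<3p$, giving $E_p(L_m)=0$.

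For (ii) I would decompose $L_2^{\otimes p}$ as an $S_p$-equivariant object via Lemma \ref{L2p}, namely $L_2^{\otimes p}=\bigoplus_\ell Q_\ell\otimes L_{2\ell}\oplus R_p\otimes \bold 1_-$ with each $Q_\ell$ projective over $\kk S_p$ and $S_p$ acting only on the multiplicity spaces. Applying $(-\otimes {\bf Lie}_p)_{S_p}$ and $(-\otimes {\bf Assoc}_p)_{S_p}$ factor by factor, $E_p(L_2)$ becomes the sum, over the multiplicity spaces $M$, of the kernels of $(M\otimes {\bf Lie}_p)_{S_p}\to (M\otimes {\bf Assoc}_p)_{S_p}=M$. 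For $M=Q_\ell$ this kernel vanishes: tensoring the short exact sequence $0\to {\bf Lie}_p\to {\bf Assoc}_p\to C\to 0$ of $\kk S_p$-modules with the projective $Q_\ell$ yields a sequence whose cokernel $Q_\ell\otimes C$ is projective, so it splits and remains exact after taking $S_p$-coinvariants. For $M=R_p$ the map is the morphism $\zeta$ of Lemma \ref{vanlemma}, which is zero, while $(R_p\otimes {\bf Lie}_p)_{S_p}=\kk$ by Lemma \ref{coinva}; hence the kernel is all of $\kk$. Therefore $E_p(L_2)=\kk\otimes \bold 1_-=\bold 1_-$.

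The step I expect to be most delicate is the vanishing of the projective contributions in (ii): one must keep the $S_p$-action cleanly separated from the ${\rm Ver}_p$-object in the decomposition of $L_2^{\otimes p}$, so that the coinvariant functors act only on the multiplicity spaces, and then verify exactness of $S_p$-coinvariants on the relevant projective $\kk S_p$-modules. In (i) the only point needing care is the compatibility of the two gradings, so that isomorphy of the PBW map in external degree $p$ genuinely yields $E_p(L_m)=0$; once Theorem \ref{nol2} is invoked with $r=3$, the rest is bookkeeping.
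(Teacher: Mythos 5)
Your argument is correct and follows the paper's own proof in all essentials: (i) via Theorem \ref{nol2} applied to ${\rm FOLie}(L_m)$ (the paper takes $r=2$, which already suffices since only the degree-one part matters, but your extra degree-two check is valid and harmless), (ii) via Lemmas \ref{L2p}, \ref{coinva} and \ref{vanlemma} (your splitting argument for the projective multiplicity spaces makes explicit a step the paper leaves implicit), and (iii) via additivity and twisted $\kk$-linearity together with Corollaries \ref{kun} and \ref{coro1}, matching the paper's construction through ${\Bbb S}^p{\rm Hom}(L_2,V)\to {\rm Hom}(L_2,V)^{(1)}$. The only slip is cosmetic: in Corollary \ref{coro1} the generator ${\rm Hom}(L_2,F(V))^{(1)}\otimes {\bold 1}_-$ sits in the \emph{highest} homological degree $p-1$ among the diagonal-degree-$p$ generators of $\underline{\rm Ext}_{\wedge V^*}(\bold 1,\bold 1)$, not the lowest, though the identification you draw from it is the correct one.
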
 

\begin{proof}
(i) Let $L={\rm FOLie}(L_m)$. Then $L$ has an external grading by positive integers, 
and does not contain $L_2$ in degree $1$. Hence, taking $r=2$ in Theorem \ref{nol2}, we see that 
the natural map $\eta: SL\to {\rm gr}U(L)$ is an isomorphism below degree $2p$. 
In particular, this is so in degree $p$, which implies that $E_p(L_m)=0$.   

(ii) By Lemma \ref{L2p}, $L_2^{\otimes p}=R_p\otimes \bold 1_-\oplus Q$, where 
$Q$ is projective over $\kk S_p$. So by Lemma \ref{coinva}, 
$E_p(L_2)$ is either $0$ or $\bold 1_-$. 
However, it follows from Lemma \ref{vanlemma}
that $E_p(L_2)\ne 0$. Thus, $E_p(L_2)=\bold 1_-$.

(iii) This follows from (i),(ii) and Proposition \ref{addit}, Corollary \ref{kun}, and Corollary \ref{coro1}. 
In more detail, we have a natural (split) inclusion 
$$
\iota: {\rm Hom}(L_2,V)\otimes L_2\to V,
$$ 
hence an inclusion 
$$
\iota_p: ({\rm Hom}(L_2,V)^{\otimes p}\otimes L_2^{\otimes p}\otimes {\bf Lie}_p)_{S_p}\to 
(V^{\otimes p}\otimes {\bf Lie}_p)_{S_p}.
$$
By Lemma \ref{L2p}, the multiplicity space of $\bold 1_-$ in $L_2^{\otimes p}$
is $R_p$, so by restriction we have a map 
$$
\xi: ({\rm Hom}(L_2,V)^{\otimes p}\otimes R_p\otimes {\bf Lie}_p)_{S_p}\otimes {\bold 1}_-\to (V^{\otimes p}\otimes {\bf Lie}_p)_{S_p}
$$
and further restricting to symmetric tensors, we get a map 
$$
\xi_{\rm sym}: {\Bbb S}^p{\rm Hom}(L_2,V)\otimes (R_p\otimes {\bf Lie}_p)_{S_p}\otimes {\bold 1}_-\to (V^{\otimes p}\otimes {\bf Lie}_p)_{S_p},
$$
i.e., using Lemma \ref{coinva},
$$
\xi_{\rm sym}: {\Bbb S}^p{\rm Hom}(L_2,V)\otimes {\bold 1}_-\to (V^{\otimes p}\otimes {\bf Lie}_p)_{S_p}.
$$
Now, we have a surjection ${\Bbb S^p}{\rm Hom}(L_2,V)\to {\rm Hom}(L_2,V)^{(1)}$, and 
by Proposition \ref{addit}, the map $\xi_{\rm sym}$ factors through
${\rm Hom}(L_2,V)^{(1)}\otimes \bold 1_-$, which embeds 
into $(V^{\otimes p}\otimes {\bf Lie}_p)_{S_p}$ 
as the kernel of $\phi^V_p$. Thus, $E_p(V)={\rm Hom}(L_2,V)^{(1)}\otimes \bold 1_-$, as stated. 
\end{proof} 

\subsection{The PBW theorem (general version)}
Now we proceed to study the PBW property in the case when $\Hom(L_2,L)\ne 0$. 
We will call the condition $\beta^L|_{E_p(L)}=0$ the {\it $p$-Jacobi identity.} 
Let us write this identity more explicitly for $p\ge 3$. 
Let $i_p: {\bold 1_-}\otimes R_p\to
L_2^{\otimes p}$ be the inclusion as a direct summand, see Lemma \ref{L2p}. 
Let $\bar \tau_p: \kk\to (R_p\otimes {\bf Lie}_p)_{S_p}$ 
be an isomorphism provided by Lemma \ref{coinva}, and 
$$
\tau_p={\rm Id}\otimes \bar\tau_p: {\bold 1}_-\to {\bold 1}_-\otimes (R_p\otimes {\bf Lie}_p)_{S_p}. 
$$
Let $\gamma: {\rm Hom}(L_2,L)\to {\rm Hom}({\bold 1}_-,L)$ be the map defined by the formula
$$
\gamma_p(x):=\beta^L\circ (x^{\otimes p}\circ i_p\otimes {\rm Id}_{{\bf Lie}_p})\circ \tau_p.
$$
Then $\gamma_p$ is twisted $\kk$-linear in $x$ (i.e., $\gamma(x+y)=\gamma(x)+\gamma(y)$ and 
$\gamma(\lambda x)=\lambda^p\gamma(x)$ for $\lambda\in \kk$; in other words, $\gamma_p$ is linear on ${\rm Hom}(L_2,L)^{(1)}$), and 
the $p$-Jacobi identity is 
$$
\gamma_p(x)=0 
$$
for all $x\in {\rm Hom}(L_2,L)$.
It is easy to see that for $p=3$, this identity reduces to 
the identity $[[x,x],x]=0$ 
for odd $x$. 

Our main result is the following PBW theorem. 

\begin{theorem}\label{maint} Let $L\in {\rm Ver}_p$ be an operadic Lie algebra. 
The following conditions are equivalent: 

(1) $L$ is PBW;

(2) $L$ is a Lie algebra; 

(3) $L$ satisfies the $p$-Jacobi identity. 
\end{theorem}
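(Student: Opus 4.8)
The plan is to establish the cycle (1)$\Rightarrow$(2)$\Rightarrow$(3)$\Rightarrow$(1); the first two arrows are immediate. If $L$ is PBW then the natural map $L\to U(L)$ is injective, so $L$ is a Lie algebra by Proposition \ref{anyasso}, which gives (1)$\Rightarrow$(2); and since $E_p(L)$ is a subobject of $E(L)$, the hypothesis $\beta^L|_{E(L)}=0$ specializes to $\beta^L|_{E_p(L)}=0$, which gives (2)$\Rightarrow$(3). Everything therefore reduces to proving (3)$\Rightarrow$(1): that the $p$-Jacobi identity forces the PBW property.

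For this I would reuse and extend the deformation-theoretic machinery from the proof of Theorem \ref{nol2}. Form the formal enveloping algebra $A=U_\hbar(L)$, a graded quadratic deformation over $\kk[[\hbar]]$ of $A_0=S\widehat L$ with $\widehat L=L\oplus \bold 1$; the PBW property of $L$ is exactly the flatness of this deformation. Passing to the quadratic dual $B=A^!$, a deformation of $B_0=\wedge(\widehat{L}^*)$, one has $A=\underline{\rm Ext}_B(\bold 1,\bold 1)_{\rm diagonal}$ by \cite{PP}, Proposition 3.1, so that ${\rm gr}_\hbar A$ is computed by the spectral sequence of the $\hbar$-adic filtration, whose first page is $\underline{\rm Ext}_{B_0}(\bold 1,\bold 1)$ as given explicitly by Corollary \ref{coro1} applied to $V=\widehat L$. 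Its diagonal part (homological degree equal to diagonal degree) is $A_0=S\widehat L$, and flatness is equivalent to the vanishing of every differential that touches the diagonal; the relevant ones run from bidegree $(i-1,i)$ to $(i,i)$, exactly as in the proof of Theorem \ref{nol2}.

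The localization of the obstruction is a bidegree count against Corollary \ref{coro1}. Every generator of $\underline{\rm Ext}_{B_0}(\bold 1,\bold 1)$ other than $\widehat L$ lies in diagonal degree $p$ and homological degree at most $p-1$, the top value $p-1$ being attained only by ${\rm Hom}(L_2,F(\widehat L))^{(1)}\otimes {\bold 1}_-$; since two such generators contribute homological degree at most $2p-2<2p-1$, no product of two or more can sit adjacent to the diagonal. Hence the only classes one homological step below the diagonal are the products $S^\bullet\widehat L\cdot \kappa$, where $\kappa$ is the single generator ${\rm Hom}(L_2,L)^{(1)}\otimes {\bold 1}_-=E_p(L)$ (using ${\rm Hom}(L_2,\bold 1)=0$); there are no classes one step above the diagonal, and none at all below diagonal degree $p$, so the deformation is automatically flat in degrees $<p$. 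Since the differential is a derivation and its value on the degree-one generators $\widehat L$ lands in homological degree $2$, diagonal degree $1$, which is zero by Corollary \ref{coro1}, every differential into the diagonal is $S\widehat L$-linearly determined by its value on $\kappa$. Unwinding the descriptions of $\gamma_p$, $\tau_p$, $i_p$ together with Corollary \ref{Epcomp}(iii) identifies the first-order term of this value with the $p$-Jacobi map $\beta^L|_{E_p(L)}$, which vanishes by hypothesis.

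The main obstacle is to upgrade this first-order vanishing to the vanishing of all higher differentials $d_r$, $r\ge 2$, carrying $\kappa$ into the diagonal. The incoming differentials to the bidegree of $\kappa$ come from the ${\rm Hom}(L_3,L)$-classes in homological degree $p-2$, so one must analyze the small complex of off-diagonal classes around $\kappa$ and show that the higher corrections to $d\kappa$, governed by the multiplicative structure of the almost-Koszul algebra of Corollary \ref{coro1}, again reduce to $\gamma_p$ and hence vanish once the $p$-Jacobi identity holds. I would handle this by reducing to the universal graded situation, where all the maps are natural in the generating object, and invoking the distributive-lattice calculus for quadratic algebras of \cite{PP}, Chapter 6 --- which is exactly the point where the fine structure of $\underline{\rm Ext}_{SV}$ in ${\rm Ver}_p$ is indispensable. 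Once every differential into the diagonal is shown to vanish, ${\rm gr}_\hbar A$ agrees with $S\widehat L$ in each degree, $A$ is flat, and $L$ is PBW, closing the cycle.
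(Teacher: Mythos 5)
Your reduction of the theorem to the implication (3)$\Rightarrow$(1), your setup via the spectral sequence of the $\hbar$-adic filtration on $U_\hbar(L)$, and your bidegree count locating the only possible obstruction classes in $S^\bullet\widehat L\cdot E_p(L)$ all match the paper's argument (this is the content of the proof of Theorem \ref{nol2} together with Corollaries \ref{coro1} and \ref{Epcomp}). But the step you yourself flag as ``the main obstacle'' --- showing that \emph{all} the differentials carrying $E_p(L)$ into the diagonal vanish, not just the one identified with $\beta^L|_{E_p(L)}$ --- is a genuine gap, and the route you propose for closing it would not work. The distributive-lattice calculus of \cite{PP}, Chapter 6 is precisely the engine behind the Koszul deformation principle (Theorem \ref{KdP}), and it requires Koszulity of the underlying object; the whole point of this theorem is that $L$ need not be Koszul in ${\rm Ver}_p$, so there is no reason the higher corrections should ``again reduce to $\gamma_p$'' by that machinery.

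The paper closes the gap with a different and essentially number-theoretic idea. Using Dynkin's explicit form of the Campbell--Baker--Hausdorff formula (\cite{Re}, 3.5.4), one observes that the primes occurring in the denominators of the coefficients of terms with at most $n-1$ commutators are at most $n$; hence the standard CBH-based proof of PBW goes through modulo $\hbar^{p-1}$ for \emph{any} operadic Lie algebra in \emph{any} symmetric tensor category in characteristic $p$. This kills $d_s$ for all $1\le s<p-1$ unconditionally. Since $M_0$ is generated in degree $p$ over $A_0$ and the incoming differentials from $\underline{\rm Ext}_{B_0}^{p-2}(\bold 1,\bold 1)[p]=\Hom(L_3,L)^{(1)}\otimes\bold 1$ vanish because $\Hom(\bold 1,\bold 1_-)=0$, the single remaining differential $d_{p-1}$ is determined by $d_{p-1}[p]:E_p(L)\to L$, which measures the failure of flatness modulo $\hbar^p$ in degree $p$ and is therefore exactly $\beta^L|_{E_p(L)}$; the $p$-Jacobi identity kills it. Finally, $d_s$ for $s>p-1$ vanishes in degree $p$ for degree reasons (the target would be $S^iL\otimes\bold 1^{\otimes p-i}$ with $i\le 0$). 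Without the CBH input your argument cannot rule out a nonzero low-order differential $d_s$, $s<p-1$, on $E_p(L)$, so the proof as proposed is incomplete.
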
 

\begin{proof} We may assume that $p\ge 5$. The PBW property implies 
that the natural map $L\to U(L)$ is injective, so 
by  Proposition \ref{anyasso}, (1) implies (2).
It is clear that (2) implies (3), so we just need to show that (3) implies (1).

We retain the notation of the proof of Theorem \ref{nol2}. 
Let $M_0=\oplus_i M_0[i]$, where $M_0[i]:=
\underline{\rm Ext}_{B_0}^{i-1}(\bold 1,\bold 1)[i]$. 
Then $M_0$ is a module over 
$A_0=\underline{\rm Ext}_{B_0}(\bold 1,\bold 1)_{\rm diagonal}$,
and Corollary \ref{coro1} implies that $M_0$ is generated in degree $i=p$,
and $M_0[p]=\Hom(L_2,L)^{(1)}\otimes {\bold 1}_-$. 

Now, if the differentials $d_j: M_0\to A_0$ of the spectral sequence vanish 
for $1\le j<s$, then $d_s$ is an $A_0$-linear map $M_0^s\to A_0$, where $M_0^s$ is 
a quotient $A_0$-module of $M_0$. Moreover, we claim that $M_0^s[p]=M_0[p]$. 
Indeed, by Corollary \ref{coro1}, $\underline{\rm Ext}_{B_0}^{p-2}(\bold 1,\bold 1)[p]=\Hom(L_3,L)^{(1)}\otimes \bold 1$, so 
all differentials from degree $(p-2,p)$ to $(p-1,p)$ vanish (as $\Hom({\bold 1}, {\bold 1}_-)=0$). 
Thus, to show that $d_s=0$, it suffices to show that the morphism
$d_s[p]: M_0[p]\to A_0[p]$ is zero. 
 
Now note that for an operadic Lie algebra $L$ 
in {\it any} symmetric tensor category in characteristic $p$, the algebra $U_\hbar(L)$ is a flat deformation 
of $SL$ modulo $\hbar^{p-1}$. Indeed, using Dynkin's explicit form of the Campbell-Baker-Hausdorff 
formula (\cite{Re}, 3.5.4), we see that the primes appearing in the denominators of coefficients of terms involving $\le n-1$ commutators are $\le n$. 
Hence, the standard proof of the PBW theorem based on the Campbell-Baker-Hausdorff formula (cf. \cite{EGNO}, Exercise 9.9.7(viii)) works modulo $\hbar^{p-1}$, as claimed. This implies that for $1\le s<p-1$ we have $d_s=0$. 

Let us now compute $d_{p-1}$ in degree $p$. 
We have $d_{p-1}[p]: M_0[p]\to L\otimes \bold 1^{\otimes p-1}=L$. 
But by Corollary \ref{Epcomp}, 
${\rm Hom}(L_2,L)^{(1)}\otimes \bold 1_-=E_p(L)$, so 
$d_{p-1}[p]: E_p(L)\to L$. 
Moreover, since $d_{p-1}[p]$ characterizes the failure of $U_\hbar(L)$ to be a flat deformation modulo $\hbar^p$ 
in degree $p$, we have $d_{p-1}[p]=\beta^L|_{E_p(L)}$. 

Thus, if $L$ satisfies the $p$-Jacobi identity, 
then $d_{p-1}[p]=0$, hence $d_{p-1}=0$. 

Then the differentials $d_s$ 
for $s>p-1$ vanish in degree $p$
for degree reasons (as they land $S^iL\otimes \bold 1^{\otimes p-i}$ for $i\le 0$). 
This implies that the spectral sequence in question has no nontrivial differentials into $A_0$. Hence 
the PBW theorem holds for $L$, as desired. 
\end{proof} 

\begin{corollary}\label{anystc}
Theorem \ref{maint} holds in any symmetric tensor category $\C$ which admits a symmetric tensor functor 
$F: \C\to {\rm Ver}_p$. In particular, it holds for any symmetric fusion category.
\end{corollary}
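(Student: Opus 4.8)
The plan is to reduce the statement to Theorem \ref{maint} in ${\rm Ver}_p$ by transporting all the relevant structure through $F$. Recall that a symmetric tensor functor is exact, faithful, and $\kk$-linear symmetric monoidal; in particular $F$ sends an operadic Lie algebra $L\in\C$ to an operadic Lie algebra $FL\in{\rm Ver}_p$, and it commutes with every functorial construction appearing in the definitions of the PBW property and of the $p$-Jacobi identity.

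First I would note that the implications $(1)\Rightarrow(2)\Rightarrow(3)$ hold in an arbitrary symmetric tensor category, and hence require no special argument here: $(1)\Rightarrow(2)$ is the remark preceding Theorem \ref{PBWt} (if $L$ is PBW then $L\hookrightarrow U(L)$ and Proposition \ref{anyasso} applies), while $(2)\Rightarrow(3)$ is immediate because $E_p(L)$ is a direct summand of $E(L)$, so $\beta^L|_{E(L)}=0$ forces $\beta^L|_{E_p(L)}=0$. Thus the entire content is the implication $(3)\Rightarrow(1)$.

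For $(3)\Rightarrow(1)$ I would argue as follows. Since $F$ is exact and monoidal it commutes with $T$, with the symmetric and enveloping algebras, and with the canonical filtration on $U(L)$ and its associated graded; consequently the natural surjections $\eta_L\colon SL\to{\rm gr}\,U(L)$ and $\eta_{FL}\colon S(FL)\to{\rm gr}\,U(FL)$ satisfy $F(\eta_L)=\eta_{FL}$. Likewise $F$ commutes with ${\rm FOLie}$ and with the kernel defining $E_p$, so that $F(E_p(L))=E_p(FL)$ and $F(\beta^L)=\beta^{FL}$. Hence if $L$ satisfies the $p$-Jacobi identity then $\beta^{FL}|_{E_p(FL)}=F\!\left(\beta^L|_{E_p(L)}\right)=0$, i.e.\ $FL$ satisfies the $p$-Jacobi identity in ${\rm Ver}_p$; by Theorem \ref{maint}, $FL$ is PBW, which is to say $\eta_{FL}=F(\eta_L)$ is an isomorphism.

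It remains to descend this isomorphism back to $\C$, and here I would invoke that an exact faithful functor reflects isomorphisms. Concretely, $\eta_L$ is always surjective, and exactness gives $F(\Ker\eta_L)=\Ker F(\eta_L)=0$; faithfulness of $F$ then forces $\Ker\eta_L=0$, so $\eta_L$ is an isomorphism and $L$ is PBW. The final clause follows by combining this with \cite{O}, Theorem 1.5, which supplies a symmetric tensor functor from any symmetric fusion category to ${\rm Ver}_p$. I expect no genuine obstacle: the only care needed is the routine verification that $F$ really does commute with the formation of $U(L)$ and of ${\rm gr}\,U(L)$ (a quotient by a two-sided ideal, and subquotients of a filtration), and that the reflection-of-isomorphisms argument is applied degree by degree, since $SL$ and ${\rm gr}\,U(L)$ are graded ind-objects.
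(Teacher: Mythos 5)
Your proposal is correct and follows the same route as the paper, which simply says the statement "is obtained easily by applying the functor $F$ and using Theorem \ref{maint}"; you have supplied exactly the routine details (that $F$ commutes with the relevant constructions, and that an exact faithful functor reflects the vanishing of $\Ker\eta_L$) that the paper leaves implicit.
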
 

\begin{proof} The first statement is obtained easily by applying the functor $F$
and using Theorem \ref{maint}. The second statement follows from the first one and 
\cite{O}, Theorem 1.5. 
\end{proof} 

\begin{corollary}\label{hilser} In the setting of Corollary \ref{anystc}, we have the following 
identity in ${\rm Gr}(\C)[[t]]$, which allows one to compute recursively the class of ${\rm FLie}_n(V)$ 
in the Grothendieck ring ${\rm Gr}(\C)$: 
$$
\prod_{n\ge 1} (1-t^n)^{{\rm FLie}_n(V)}=1-tV.
$$
\end{corollary}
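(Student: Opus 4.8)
The plan is to recognize the identity as the Grothendieck-ring shadow of the PBW theorem for the free Lie algebra ${\rm FLie}(V)$, read off from two descriptions of the tensor algebra $TV$. First I would pin down the meaning of the exponential notation: for a class $[W]\in {\rm Gr}(\C)$ and a variable $u$, set $\sigma_u([W]):=\sum_{k\ge 0}[S^kW]\,u^k$, and interpret $(1-u)^{-[W]}:=\sigma_u([W])$ (so $(1-u)^{[W]}:=\sigma_u([W])^{-1}$). For this to be legitimate I must check that symmetric powers descend to ${\rm Gr}(\C)$ and that $\sigma_u$ is additive-to-multiplicative, i.e. $\sigma_u([W]+[W'])=\sigma_u([W])\,\sigma_u([W'])$. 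The multiplicativity comes from the decomposition $S^n(W\oplus W')=\bigoplus_{i+j=n}S^iW\otimes S^jW'$, valid in any symmetric tensor category; well-definedness on the Grothendieck ring follows because $\otimes$ is exact (by rigidity) and $S^kW$ carries a natural filtration whose associated graded computes $S^k$ of a short exact sequence. With this convention the notation is consistent: the factor indexed by $n$ is $\equiv 1\pmod{t^n}$, so the infinite product converges $t$-adically, and since $S^k\bold 1=\bold 1$ we get $(1-u)^{[\bold 1]}=1-u$, matching the treatment of $\bold 1$ as the multiplicative unit.

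Next I would compute $[TV]\in {\rm Gr}(\C)[[t]]$ in two ways, grading by tensor length (the $t$-degree). On one hand $[TV]=\sum_{n\ge 0}[V^{\otimes n}]\,t^n=\sum_n [V]^n t^n=(1-tV)^{-1}$. On the other hand, the free Lie algebra ${\rm FLie}(V)={\rm FOLie}(V)/E(V)$ is a Lie algebra by Corollary \ref{extendsuni}(i), hence PBW by Corollary \ref{anystc} (Theorem \ref{maint} transported along $F:\C\to {\rm Ver}_p$). Since the PBW filtration on $U({\rm FLie}(V))$ is compatible with the tensor grading, PBW gives an isomorphism of $t$-graded objects ${\rm gr}\,U({\rm FLie}(V))\cong S({\rm FLie}(V))$, whence $[U({\rm FLie}(V))]=[S({\rm FLie}(V))]$ in ${\rm Gr}(\C)[[t]]$. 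Using the canonical isomorphism $U({\rm FLie}(V))\cong TV$, I conclude $[TV]=[S({\rm FLie}(V))]$.

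Finally I would factor the right-hand side. Because ${\rm FLie}(V)=\bigoplus_{n\ge 1}{\rm FLie}_n(V)$ with ${\rm FLie}_n(V)$ sitting in tensor degree $n$, iterating $S(X\oplus Y)\cong SX\otimes SY$ yields $[S({\rm FLie}(V))]=\prod_{n\ge 1}\sigma_{t^n}([{\rm FLie}_n(V)])=\prod_{n\ge 1}(1-t^n)^{-{\rm FLie}_n(V)}$. Combining with $[TV]=(1-tV)^{-1}$ and inverting gives $\prod_{n\ge 1}(1-t^n)^{{\rm FLie}_n(V)}=1-tV$, as claimed; the recursive computation of $[{\rm FLie}_n(V)]$ then follows by expanding the product degree by degree.

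The deep input, the PBW theorem, is already in hand, so the hard part here is not a new estimate but the careful categorical bookkeeping: verifying that the $\sigma$-operations are well defined and multiplicative on ${\rm Gr}(\C)$ (the one place where the positive-characteristic subtleties of $S^k$ could bite), and checking that the PBW filtration respects the tensor grading so that passing to the associated graded preserves the Poincar\'e series. Both are routine but must be stated in the categorical language rather than merely invoked for vector spaces.
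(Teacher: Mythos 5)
Your argument is correct and is essentially the paper's own proof: invoke PBW for ${\rm FLie}(V)$ (via Corollary \ref{anystc}, since ${\rm FLie}(V)$ is a Lie algebra by Corollary \ref{extendsuni}(i)), identify $U({\rm FLie}(V))\cong TV$ by the universal property, and compare Hilbert series of $S{\rm FLie}(V)$ and ${\rm gr}(TV)$ in ${\rm Gr}(\C)[[t]]$. The extra care you take with the well-definedness and multiplicativity of the $\sigma$-operations is a worthwhile elaboration of what the paper leaves implicit, but it is the same route.
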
 

\begin{proof} The proof is the same as in the classical case. Namely, 
we have isomorphisms of graded algebras 
$S{\rm FLie}(V)\cong {\rm gr}U({\rm FLie}(V))$ (given by Theorem \ref{maint}) and $U({\rm FLie}(V))\cong TV$ (as both sides satisfy the same universal property). Hence we have a graded isomorphism $S{\rm FLie}(V)\cong {\rm gr}(TV)$. The statement now follows
by taking the Hilbert series in ${\rm Gr}(\C)[[t]]$ and inverting both sides. 
\end{proof} 

\begin{question} Let $L$ be an operadic Lie algebra in any symmetric tensor category $\C$ over a field 
$\kk$ of characteristic $p>0$. Does condition (2) in Theorem \ref{maint} imply (1)? Does (3) imply (2)?  
\end{question} 

In Section 7, we show that (2) implies (1) in quasi-semisimple categories. 

\subsection{Operadic Lie algebras failing the PBW theorem}

Let \linebreak ${\rm FOLie}(V)_{\le n}$ denote the quotient of ${\rm FOLie}(V)$ by 
degrees $\ge n+1$.   

\begin{proposition}\label{fail} For any $n\ge p$, the operadic Lie algebra ${\rm FOLie}(L_2)_{\le n}$ fails 
the PBW theorem, so is not a Lie algebra (and in particular fails the $p$-Jacobi identity).  
\end{proposition}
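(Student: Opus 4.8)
The plan is to show that $L:={\rm FOLie}(L_2)_{\le n}$ fails the $p$-Jacobi identity, i.e.\ that $\beta^L|_{E_p(L)}\neq 0$; the failure of the PBW theorem (and hence of the Lie algebra axiom, and a fortiori of the $p$-Jacobi identity itself) then follows at once from Theorem \ref{maint}. Since $L$ carries an external grading with $L[1]=L_2$, Corollary \ref{Epcomp}(iii) gives $E_p(L)={\rm Hom}(L_2,L)^{(1)}\otimes {\bold 1}_-$, which is nonzero because ${\rm Hom}(L_2,L[1])={\rm Hom}(L_2,L_2)=\kk$. So the real content is to certify that $\beta^L$ does not annihilate the part of $E_p(L)$ coming from the degree-$1$ generators.

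First I would set up a comparison between $\beta^L$ and the free object ${\rm FOLie}(L_2)$. Let $\iota\colon L_2=L[1]\hookrightarrow L$ be the inclusion of generators, and let $q\colon {\rm FOLie}(L_2)\to {\rm FOLie}(L_2)_{\le n}=L$ be the truncation map. The subobject of degrees $>n$ is a Lie ideal, so $q$ is a Lie homomorphism restricting to $\iota$ on generators; by the universal property of ${\rm FOLie}(L_2)$ it is therefore the unique extension of $\iota$, that is, $q=\beta^L\circ {\rm FOLie}(\iota)$. Moreover, by the argument in the proof of Corollary \ref{extendsuni} the map ${\rm FOLie}(\iota)$ carries $E(L_2)$ into $E(L)$, and in particular sends $E_p(L_2)$ into $E_p(L)$.

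The key computation is then to track the nonzero object $E_p(L_2)={\bold 1}_-$ (Corollary \ref{Epcomp}(ii)) through these maps. Because $n\ge p$, the truncation $q$ is an isomorphism in external degree $p$, so $q|_{E_p(L_2)}$ is the genuine nonzero inclusion ${\bold 1}_-\hookrightarrow {\rm FOLie}_p(L_2)=L[p]$. Combining this with the previous paragraph, $\beta^L|_{E_p(L)}\circ {\rm FOLie}(\iota)|_{E_p(L_2)}=q|_{E_p(L_2)}\neq 0$, whence $\beta^L|_{E_p(L)}\neq 0$. Thus $L$ violates the $p$-Jacobi identity, and Theorem \ref{maint} yields that $L$ is not a Lie algebra and is not PBW.

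The main obstacle is precisely this last nonvanishing: a priori one only knows that the object $E_p(L)$ is nonzero, and one must rule out that $\beta^L$ happens to kill it. The reduction above pins this down to the single nontrivial input $E_p(L_2)={\bold 1}_-\neq 0$, which itself rests on Lemma \ref{vanlemma}; the hypothesis $n\ge p$ enters exactly to guarantee that external degree $p$ survives the truncation, so that this nonzero class is not collapsed by $q$. I expect the only delicate point to be the clean identification $q=\beta^L\circ {\rm FOLie}(\iota)$ via the universal property, after which the conclusion is immediate.
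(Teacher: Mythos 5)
Your proof is correct and follows essentially the same route as the paper: both arguments rest on $E_p(L_2)=\bold 1_-\neq 0$ (Corollary \ref{Epcomp}(ii)), the observation that truncation at $n\ge p$ is an isomorphism in degree $p$, and the universal-property identification of the truncation map as $\beta^L\circ{\rm FOLie}(\iota)$, before invoking Theorem \ref{maint}. The only cosmetic difference is that you directly disprove the $p$-Jacobi identity, whereas the paper disproves the Lie algebra condition via Corollary \ref{extendsuni}(ii); given the equivalences in Theorem \ref{maint}, these are interchangeable.
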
 

\begin{proof} By Corollary \ref{Epcomp}(ii), $E_p(L_2)\ne 0$, so 
the surjective map 
$$
{\rm FOLie}(L_2)_{\le n}\to {\rm FLie}(L_2)_{\le n}
$$ 
is not an isomorphism  in degree $p$. Hence the natural map 
$L_2\to {\rm FOLie}(L_2)_{\le n}$ (the isomorphism onto degree $1$) 
does not factor through ${\rm FLie}(L_2)$. By Corollary \ref{extendsuni}(ii),
this means that ${\rm FOLie}(L_2)_{\le n}$ is not a Lie algebra, so the result follows 
from Theorem \ref{maint}. 
\end{proof} 

\begin{example} Let $p=5$, and 
let us prove Proposition \ref{fail} by a direct computation. 
 
Let $V=L_2\in {\rm Ver}_5$. Then $L_3=V_-:=V\otimes {\bold 1}_-$. 

Let $L={\rm FOLie}(V)_{\le 5}$, Then it is easy to see that we have $L[1]=V$, $L[2]=\wedge^2V=\bold 1$, 
$L[3]=[L[1],L[2]]=V$, $L[4]=L_3=V_-$.

We also claim that $L[5]=V\oplus \bold 1_-$. To see this, 
note that by Lemma \ref{L2p}, 
$V^{\otimes 5}=P_5\otimes V\oplus R_5\otimes \bold 1_-$, so the claim follows from Lemma \ref{coinva}. 

Now we claim that the natural map $\eta: SL\to {\rm gr}U(L)$ is not injective in (external) degree $5$. 
To see this, look at the the multiplicity of $\bold 1_-$. For 
${\rm gr}U(L)$ or, equivalently, $U(L)$, it equals
the corresponding multiplicity for $(TV)[5]=V^{\otimes 5}$, since $U(L)[m]=TV[m]$ for $m\le 5$. 
By Lemma \ref{L2p}, this multiplicity is equal to ${\rm dim} R_5=3$. 
On the other hand, $SL=\otimes_{i=1}^5 SL[i]$, and 
we get a copy of $\bold 1_-$ from $L[5]$, another copy from $L[1]\otimes L[4]$, 
another from $S^2L[1]\otimes L[3]$, and another from $S^3L[1]\otimes L[2]$, altogether $4$ copies. 

Thus, the PBW theorem fails for $L$. The same statement (with the same proof) applies 
to the Lie algebra 
$$
L':=L/V[5]=V\oplus \bold 1\oplus V\oplus V_-\oplus \bold 1_-,
$$
where $V[5]$ is the copy of $V$ in degree $5$. 
\end{example} 

\section{The PBW theorem in general symmetric tensor categories} 

\subsection{Differential operators} 

Let $\C$ be a symmetric tensor category over a field $\kk$ of any characteristic, and $V\in \C$. 
We will now define the algebra $D(V)$ of (crystalline) differential operators on $V$, which is an ind-algebra in $\C$. 

\begin{definition} The algebra $D(V)$ of differential operators on $V$ is the quotient of the tensor algebra 
$T(V\oplus V^*)$ by the ideal generated by $\Lambda^2V\oplus \Lambda^2V^*\oplus E$, where 
$E$ is the image of the morphism 
$$
P-{\rm Id}-{\rm ev}_V: V^*\otimes V\to T(V\oplus V^*),
$$
where $P$ is the permutation and ${\rm ev}_V: V^*\otimes V\to \bold 1$ is the evaluation map. 
\end{definition} 

Note that if $V$ is a finite dimensional vector space, this gives the usual algebra of (crystalline) differential 
operators on $V$, i.e., the Weyl algebra. 
 
The algebra $D(V)$ carries a filtration defined by ${\rm deg}(V)=1$, ${\rm deg}(V^*)=0$, and we have a natural surjective morphism 
$$
\eta: S(V\oplus V^*)\to {\rm gr}(D(V))
$$ 
(since the relations of $S(V\oplus V^*)$ are those of 
$D(V)$ without the lower-degree summand ${\rm ev}_V$). 
 
\begin{proposition}\label{PBWdiff}  (PBW theorem for differential operators) The morphism $\eta$ is an isomorphism.
\end{proposition}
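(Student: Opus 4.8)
The plan is to prove the equivalent statement that the ``normal ordering'' morphism
$$
\theta\colon SV^*\otimes SV\to D(V),
$$
sending a product of functions ($V^*$) times a product of constant-coefficient derivations ($V$) to their product in $D(V)$ with all of $V^*$ placed to the left of all of $V$, is an isomorphism of ind-objects. Since $S(V\oplus V^*)\cong SV^*\otimes SV$ by the exponential decomposition of the symmetric algebra of a direct sum, and since the order filtration ($\deg V=1,\deg V^*=0$) makes $\theta$ filtered with $\mathrm{gr}\,\theta=\eta$, the map $\theta$ is an isomorphism if and only if $\eta$ is. First I would check surjectivity of $\theta$: its image contains $\bold 1$ and, using the relations $\Lambda^2V,\Lambda^2V^*$ and the Weyl relation $E$ to reorder, is stable under left multiplication by $V$ and by $V^*$ (left multiplication by $v\in V$ turns a normal-ordered monomial $f\cdot\partial_u$ into $f\cdot(v\partial_u)$ plus a lower-order term coming from contracting $v$ into $f$ via $\mathrm{ev}_V$); hence the image is all of $D(V)$, since $D(V)$ is generated by $V\oplus V^*$.

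The heart of the argument is injectivity, and here the plan is to use a faithful module. The naive candidate, the algebra of polynomial functions $SV^*$ with $V^*$ acting by multiplication and $V$ by the corresponding derivations, is \emph{not} faithful in characteristic $p$: the operator $\partial_v^p$ is a nonzero order-$p$ element of $D(V)$ but acts as zero on $SV^*$. I would therefore replace it by the divided-power algebra $M:=\Bbb SV^*=(SV)^*_{\mathrm{res}}$, the restricted-dual Hopf algebra of $SV$, on which $V^*$ acts by the (divided-power) multiplication and each $v\in V$ acts by the contraction $\partial_v\colon\Bbb S^kV^*\to\Bbb S^{k-1}V^*$ obtained by dualizing the multiplication of $SV$ and pairing against $v$. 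A direct check shows that the $V$-contractions commute, the $V^*$-multiplications commute, and that contraction is a derivation for the divided-power product, so that the Weyl relation $[\partial_v,\xi\cdot]=\langle\xi,v\rangle$ holds; thus $M$ is a $D(V)$-module and we obtain an algebra morphism $\rho\colon D(V)\to\underline{\mathrm{End}}(M)$. Crucially, on divided powers $\partial_v^p\neq0$, because the contraction operators are detected by the canonical \emph{perfect} pairing $S^bV\otimes\Bbb S^bV^*\to\bold 1$, which, coming from rigidity of the honest object $S^bV$ (whose dual is $\Bbb S^bV^*$), is nondegenerate in every characteristic.

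To finish, I would show that $\rho\circ\theta$ is injective by a leading-term argument with respect to the order filtration. For a bihomogeneous element of bidegree $(a,b)$ in $SV^*\otimes SV$, the ``lowest block'' of the associated operator, namely its restriction $\Bbb S^bV^*\to\Bbb S^aV^*$ to inputs of minimal degree $b$, is exactly the image of that element under the canonical isomorphism $S^aV^*\otimes S^bV\cong\underline{\mathrm{Hom}}\bigl((S^bV)^*,S^aV^*\bigr)=\underline{\mathrm{Hom}}(\Bbb S^bV^*,\Bbb S^aV^*)$, using that $\partial_u m=\langle u,m\rangle\in\bold 1$ when $\deg m=\deg u=b$. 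Hence a nonzero bihomogeneous element cannot lie in the kernel of $\rho\circ\theta$. For a general element one filters by the operator order $b$, takes the top order, and argues that its principal symbol vanishes block-by-block in the output degree $a$, then descends by induction on $b$. This yields injectivity of $\theta$, and therefore of $\eta$, completing the proof. The main obstacle, and the reason $D(V)$ is always PBW even when $V$ is not a Koszul object, is precisely this choice of module: one must avoid the characteristic-$p$ collapse $\partial^p=0$ of the naive function module and instead exploit the perfect duality between $S^bV$ and $\Bbb S^bV^*$ furnished by rigidity, which separates all the derivative operators independently of the characteristic and of any Koszulity of $V$.
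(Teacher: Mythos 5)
Your overall strategy --- build a $D(V)$-module that separates normal-ordered monomials and then run a leading-term argument --- is the same as the paper's, and your surjectivity argument is fine. But the module you chose for the key injectivity step is not faithful, so the heart of the proof breaks. On $M=\Bbb S V^*$ the divided-power multiplication by $\xi\in V^*$ satisfies $\xi\cdot\xi^{[k]}=(k+1)\xi^{[k+1]}$ (dualize the comultiplication of $SV$), hence
$$
\xi^{p}\cdot\xi^{[k]}=(k+1)(k+2)\cdots(k+p)\,\xi^{[k+p]}=0,
$$
since a product of $p$ consecutive integers is divisible by $p$. So while you have cured $\partial_v^p=0$, you have introduced the dual disease: $\rho(\theta(\xi^p\otimes 1))=0$ even though $\xi^p\otimes 1$ is a nonzero element of $S^pV^*\otimes S^0V$ --- already for $V=\kk$, i.e., for the ordinary Weyl algebra in characteristic $p$. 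This is forced by symmetry: your $M$, with $V$ contracting and $V^*$ multiplying, is exactly the restricted dual of the naive module $SV$ (with $V$ multiplying and $V^*$ differentiating), so it must fail in the dual way. Concretely, the ``lowest block'' of $\theta(f\otimes u)$ lands in $\Bbb S^aV^*$, and identifying it with $f\otimes u$ requires the symmetrization map $S^aV^*\to\Bbb S^aV^*$, which is not injective once $a\ge p$; and no higher block rescues the argument, since the whole operator vanishes.

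The repair is to enlarge the module so that it records the full symbol, which is what the paper does: let $D(V)$ act on $S(V\oplus V^*)=SV\otimes SV^*$ with $V^*$ acting by $1\otimes\mu_{V^*}$ and $V$ acting by $\mu_V\otimes 1+1\otimes\partial_V$, where $\partial_V$ is the differentiation on $SV^*$ obtained by dualizing the multiplication of $\Bbb SV$. One checks the relations of $D(V)$ hold, and evaluating the action on the unit $\bold 1\subset S(V\oplus V^*)$ yields a filtered morphism $\zeta\colon D(V)\to S(V\oplus V^*)$ with ${\rm gr}(\zeta)\circ\eta={\rm Id}$, whence $\eta$ is injective. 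The extra tensor factor is exactly what separates the monomials your module collapses: since $\mu_v\otimes 1$ and $1\otimes\partial_v$ commute, $v$ raised to the $p$-th power acts by $\mu_v^p\otimes 1+1\otimes\partial_v^p=\mu_v^p\otimes 1\ne 0$, while $\xi^p$ acts by $1\otimes\mu_{\xi}^p\ne 0$ because here $SV^*$ carries the ordinary (not divided-power) multiplication. No single ``small'' module of functions or of divided powers can work; you need the regular-representation-sized one.
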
 

\begin{proof} Define an action of $D(V)$ on $S(V\oplus V^*)=SV\otimes SV^*$ as follows. Let $V^*$ act by $1\otimes \mu_{V^*}$, 
where  $\mu_{V^*}: V^*\otimes SV^*\to SV^*$ is the multiplication, and let $V$ act by $\mu_V\otimes 1+1\otimes \partial_V$,
where $\partial_V: V\otimes SV^*\to SV^*$ is the differentiation map (obtained by dualizing the multiplication map on $\Bbb SV$). 
It is easy to check that this defines an action of $T(V\oplus V^*)$ on $S(V\oplus V^*)$ which descends to an action of $D(V)$ 
(i.e., the relations of $D(V)$ are satisfied). Moreover, applying this action to the unit $\bold 1\subset S(V\oplus V^*)$, we obtain
a filtered $\C$-morphism $\zeta:  D(V)\to S(V\oplus V^*)$ such that ${\rm gr}(\zeta)\circ \eta={\rm Id}$. This implies that 
$\eta$ is injective, hence an isomorphism.  
\end{proof} 

\subsection{The PBW theorem for associative algebras} 

\begin{corollary}\label{assocal}  Let $L\in \C$ be an associative algebra. Then $L$ is PBW when regarded as a Lie algebra. 
\end{corollary}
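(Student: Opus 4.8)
The plan is to deduce the statement from the PBW theorem for differential operators (Proposition \ref{PBWdiff}) by realizing $L$, regarded as an operadic Lie algebra under the commutator bracket $b=\mu-\mu^{\op}$, inside $D(L)$. The starting point is the left regular action: left multiplication is an algebra morphism $L\to \underline{\rm Hom}(L,L)$, hence a Lie homomorphism $L\to \mathfrak{gl}(L):=\underline{\rm Hom}(L,L)=L\otimes L^*$, where the target carries the commutator bracket. Composing with the standard realization of $\mathfrak{gl}(L)$ inside $D(L)$ as linear vector fields (the part of filtration degree $1$ lying in $L^*\otimes L\subset D(L)$, on which the commutator of $D(L)$ restricts to the bracket of $\mathfrak{gl}(L)$), I obtain a Lie homomorphism $\lambda: L\to D(L)$ whose image lies in filtration degree $1$.

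By the universal property of $U(L)$, the Lie homomorphism $\lambda$ extends uniquely to an algebra morphism $\tilde\lambda: U(L)\to D(L)$. Since $\lambda(L)$ lands in filtration degree $1$ and $L$ generates $U(L)$ in degree $1$, the map $\tilde\lambda$ is filtered and induces ${\rm gr}(\tilde\lambda): {\rm gr}U(L)\to {\rm gr}D(L)=S(L\oplus L^*)$, using Proposition \ref{PBWdiff}. Precomposing with the canonical surjection $\eta: SL\to {\rm gr}U(L)$ yields an algebra morphism $\Phi:={\rm gr}(\tilde\lambda)\circ\eta: SL\to S(L\oplus L^*)$. As $\eta$ is already known to be surjective, it suffices to prove $\eta$ injective, and for this it is enough to show $\Phi$ is injective.

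To see that $\Phi$ is injective I would use the unit $u:\bold 1\to L$. On the degree-$1$ part, $\Phi|_L$ is the symbol of $\lambda$, i.e.\ the left-multiplication morphism $\ell: L\to L\otimes L^*$ (adjoint to $\mu$) viewed inside the bidegree-$(1,1)$ summand $L\otimes L^*\subset S(L\oplus L^*)$. Let ${\rm ev}_u: SL^*\to \bold 1$ be the algebra morphism induced by $u^*: L^*\to \bold 1$ (evaluation of polynomial functions at the unit). The unit axiom gives $({\rm id}_L\otimes u^*)\circ\ell={\rm id}_L$, so that $({\rm ev}_u\otimes {\rm id}_{SL})\circ\Phi$ restricts to ${\rm id}_L$ on the generators $L$; since both sides are algebra endomorphisms of the free commutative algebra $SL$, this forces $({\rm ev}_u\otimes {\rm id}_{SL})\circ\Phi={\rm id}_{SL}$. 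Hence $\Phi$ is a split injection, so $\eta$ is injective and therefore an isomorphism, proving that $L$ is PBW.

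The main obstacle I anticipate is the careful categorical construction of $\lambda$: verifying that the realization of $\mathfrak{gl}(L)$ as linear vector fields is genuinely bracket-preserving for the $D(L)$-commutator (with the correct handedness, which may require using right rather than left multiplication to fix a sign) and that its symbol is exactly $\ell$. Once $\lambda$ is in place, the filtered extension to $U(L)$ and the evaluation-at-the-unit argument are formal, and they faithfully mirror the classical proof that the commutator Lie algebra of a unital associative algebra satisfies PBW.
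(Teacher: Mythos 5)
Your proposal is correct and follows essentially the same route as the paper: the paper's morphism $\delta: L\to L^*\otimes L\subset D(L)$ (the partial dual of the multiplication, i.e.\ the left regular representation realized as linear vector fields) is precisely your $\lambda$, and your splitting of $\Phi={\rm gr}(\widetilde\lambda)\circ\eta$ by evaluation at the unit via $u^*:L^*\to \bold 1$ is exactly the paper's identity $|_{t=1}\circ(\varepsilon\otimes 1)\circ\widetilde{\delta}={\rm Id}_{SL}$. The only point you omit is the non-unital case, which the paper disposes of by adjoining a unit: $\hat L=L\oplus\bold 1$ is unital and $U(\hat L)=U(L)\otimes S\bold 1$, so the unital case implies the general one.
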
  

\begin{proof} First assume that $L$ is a unital algebra. Let 
$$
\delta: L\to L^*\otimes L
$$ 
be the dual of the multiplication map $m: L\otimes L\to L$. 
Since the object $L^*\otimes L$ sits canonically inside $D(L)$ (as linear vector fields), this defines a morphism $\delta: L\to D(L)$. 
It is easy to check that $\delta$ is, in fact, a Lie algebra homomorphism. Thus it gives rise to an associative algebra homomorphism $U(L)\to D(L)$, which we will also denote by $\delta$. 

Recall that we have a surjective morphism $\eta_L: SL\to {\rm gr}U(L)$, and our job is to show that it is injective. 
To this end, it suffices to show that the algebra morphism 
$\widetilde{\delta}:={\rm gr}(\delta)\circ \eta_L :SL\to {\rm gr}D(L)$  is injective. But by Proposition \ref{PBWdiff}, 
we have a canonical isomorphism ${\rm gr}D(L)\cong S(L\oplus L^*)=SL^*\otimes SL$, and upon this identification, 
$\widetilde{\delta}$ is induced by the morphism $\delta: L\to L^*\otimes L\subset S(L^*\oplus L)$. Let $\varepsilon: SL^*\to S\bold 1$ 
be the algebra map induced by the map $\varepsilon: L^*\to \bold 1$ dual to the unit map $\iota: \bold 1\to L$. 
Then the composition $(\varepsilon\otimes 1)\circ \widetilde{\delta}$ maps 
$SL$ to $S\bold 1\otimes SL=\kk[t]\otimes SL$, and it is easy to check 
that 
$$
|_{t=1}\circ (\varepsilon\otimes 1)\circ \widetilde{\delta}={\rm Id}_{SL}.
$$
This implies that $\widetilde{\delta}$ is injective, as desired. 

In general, the object $\hat L:=L\oplus \bold 1$ is naturally 
a unital algebra, and $U(\hat L)=U(L)\otimes S\bold 1$, 
which implies the required statement in full generality. 
\end{proof} 

\begin{remark} It is easy to show that Corollary \ref{assocal} applies to ind-algebras $L$, with a similar proof. 
The proof is modified in a standard way by considering appropriate inductive and projective limits. 
\end{remark} 

\subsection{The PBW theorem for a general Lie algebra}

\begin{definition} A symmetric tensor category $\C$ is called quasi-semisimple if for any 
injection $X\to Y$ in $\C$, the corresponding map $SX\to SY$ is injective. 
\end{definition} 

It is easy to see that any semisimple symmetric tensor category is quasi-semisimple, and 
any symmetric tensor category that admits a tensor functor into a quasi-semisimple (e.g., semisimple) 
symmetric tensor category is itself quasi-semisimple. Also note that examples of symmetric tensor categories which are not quasi-semisimple 
are known only in characteristic $2$, see \cite{V}, Subsection 2.3.
   
\begin{lemma}\label{subquot} 
(i) Let $L$ be an operadic Lie algebra in $\C$, and $I\subset L$ be an ideal. If $L$ is PBW then $L/I$ is PBW. 

(ii) Let $\C$ be quasi-semisimple, and $L_1\subset L_2$ be operadic Lie algebras in $\C$. If $L_2$ is PBW then so is $L_1$.
\end{lemma}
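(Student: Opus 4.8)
The plan is to prove both parts by comparing the natural surjections $\eta\colon SL\to {\rm gr}\,U(L)$ through the square induced by a Lie homomorphism $f\colon L\to L'$. Such an $f$ induces a filtered algebra map $U(f)\colon U(L)\to U(L')$ (by the universal property of $U$) and a graded algebra map $S(f)\colon SL\to SL'$. Since each $\eta$ is well defined as an algebra map (its target ${\rm gr}\,U$ being commutative), and since ${\rm gr}\,U(f)\circ\eta_L$ and $\eta_{L'}\circ S(f)$ are algebra homomorphisms that both restrict to $f$ on the degree-one generators of $SL$, one obtains the commutative square ${\rm gr}\,U(f)\circ\eta_L=\eta_{L'}\circ S(f)$. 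In both parts $\eta$ is automatically surjective, so only injectivity is at issue.

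For (ii) I would take $f=\iota\colon L_1\hookrightarrow L_2$ and use quasi-semisimplicity exactly once. Since $\iota$ is injective, the hypothesis on $\C$ gives that $S(\iota)\colon SL_1\to SL_2$ is injective. As $L_2$ is PBW, $\eta_{L_2}$ is an isomorphism, so the composite $\eta_{L_2}\circ S(\iota)$ is injective; by the commutative square this composite equals ${\rm gr}\,U(\iota)\circ\eta_{L_1}$. A morphism whose post-composition with another morphism is a monomorphism is itself a monomorphism, so $\eta_{L_1}$ is injective, hence an isomorphism. This settles (ii) entirely.

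For (i) I would take $f=\pi\colon L\to L/I$. By the universal properties (Yoneda) one identifies $S(L/I)=SL/(SL\cdot I)$ and $U(L/I)=U(L)/J$, where $J$ is the two-sided ideal of $U(L)$ generated by $\iota(I)$ and $SL\cdot I$ is the ideal of the commutative algebra $SL$ generated by $I\subset S^1L$. As the filtration on $U(L/I)$ is induced from that of $U(L)$, the sequence of filtered objects is strict and ${\rm gr}\,U(L/I)={\rm gr}\,U(L)/{\rm gr}\,J$. The PBW hypothesis for $L$ turns $\eta_L$ into an isomorphism ${\rm gr}\,U(L)\cong SL$, under which $S(\pi)$ and ${\rm gr}\,U(\pi)$ become the two quotient maps $SL\to SL/(SL\cdot I)$ and $SL\cong{\rm gr}\,U(L)\to {\rm gr}\,U(L)/{\rm gr}\,J$. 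By the commutative square, $\eta_{L/I}$ is precisely the map induced by $\eta_L$ on these quotients, so it is an isomorphism as soon as the two kernels agree, i.e.\ as soon as ${\rm gr}\,J=SL\cdot I$ inside $SL$.

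The key step, and the main obstacle, is this identity ${\rm gr}\,J=SL\cdot I$. The inclusion $SL\cdot I\subseteq{\rm gr}\,J$ is formal: $\iota(I)$ lies in $J\cap F_1U(L)$, its symbols in ${\rm gr}_1={\rm gr}_1 = S^1L$ recover $I$, and these generate an ideal contained in the ideal ${\rm gr}\,J$. The reverse inclusion is where the hypothesis that $I$ is an \emph{ideal} (not merely a subobject) is used: in $U(L)$ one has $\iota(i)\iota(u)-\iota(u)\iota(i)=\iota([i,u])$ with $[i,u]\in[I,L]\subseteq I$, so every tensor factor originating from $I$ can be moved to one side at the cost of terms that remain in $J$ and drop in filtration degree. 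An induction on filtration degree (as in the classical PBW setting) then shows that the leading symbol of any element of $J$ lands in $SL\cdot I$, giving ${\rm gr}\,J\subseteq SL\cdot I$ and completing (i). I expect carrying out this reordering in the basis-free categorical setting to be the only real work; everything else is diagram chasing together with the adjunction identifications of $S(L/I)$ and $U(L/I)$.
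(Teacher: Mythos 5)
Your argument is correct and is essentially the paper's own proof: part (ii) is verbatim the same commutative-square argument ($\mathrm{gr}\,U(j)\circ\eta_1=\eta_2\circ S(j)$ with $S(j)$ injective by quasi-semisimplicity), and part (i) is the same identification ${\rm gr}\,U(L/I)={\rm gr}\,U(L)/{\rm gr}(U(L)I)$ with the key point being ${\rm gr}(U(L)I)=SL\cdot I$. The only difference is that you spell out the strictness/reordering step behind that last equality (moving factors from $I$ past $L$ using $[I,L]\subseteq I$ and inducting on filtration degree), which the paper compresses into the one line ``since $I$ is a Lie ideal, ${\rm gr}(U(L)I)={\rm gr}U(L)\cdot I$''.
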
 

\begin{proof} (i) We have ${\rm gr}U(L/I)={\rm gr}(U(L)/U(L)I)={\rm gr}U(L)/{\rm gr}(U(L)I)$. 
Since $I$ is a Lie ideal, we have ${\rm gr}(U(L)I)={\rm gr}U(L)\cdot I$. Since $L$ is PBW, 
we have ${\rm gr}U(L)=SL$. Thus, ${\rm gr}U(L/I)=SL/SL\cdot I=S(L/I)$, i.e., $L$ is PBW. 

(ii) We have surjective morphisms $\eta_i: SL_i\to {\rm gr}U(L_i)$, and $\eta_2$ is injective. 
Let $j: L_1\to L_2$ be the embedding, and denote the corresponding maps $SL_1\to SL_2$, $U(L_1)\to U(L_2)$ by $S(j),U(j)$ respectively. 
Then we have ${\rm gr}U(j)\circ \eta_1=\eta_2\circ S(j)$. But $S(j)$ is injective since $\C$ is quasi-semisimple. 
Hence $\eta_1$ is injective, as desired.  
\end{proof} 

\begin{proposition}\label{subal} Let $\C$ be quasi-semisimple, and $L\subset A$ be a Lie subalgebra of an associative algebra $A$. 
Then $L$ is PBW. 
\end{proposition}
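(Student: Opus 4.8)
The plan is to combine the two results already in hand: every associative algebra is PBW when regarded as a Lie algebra (Corollary \ref{assocal}), and in a quasi-semisimple category any operadic Lie subalgebra of a PBW Lie algebra is itself PBW (Lemma \ref{subquot}(ii)). The proposition is essentially the concatenation of these two facts, so the work is to check that the hypotheses line up.

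First I would regard the ambient associative algebra $A$ as an operadic Lie algebra via the commutator bracket $b=\mu-\mu^{\op}$, and invoke Corollary \ref{assocal} to conclude that $A$ is PBW; that is, the natural surjection $\eta_A\colon SA\to {\rm gr}\,U(A)$ is an isomorphism. By hypothesis $L\subset A$ is a Lie subalgebra, hence an operadic Lie subalgebra of the PBW Lie algebra $A$. Since $\C$ is quasi-semisimple, Lemma \ref{subquot}(ii) applies with $L_1=L$ and $L_2=A$ and yields at once that $L$ is PBW: the embedding $j\colon L\to A$ induces an injection $S(j)\colon SL\to SA$ (quasi-semisimplicity), and from ${\rm gr}\,U(j)\circ \eta_L=\eta_A\circ S(j)$ together with the injectivity of $\eta_A$ one reads off that $\eta_L$ is injective, hence an isomorphism.

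The only point that requires any care is the possibility that $A$ (and therefore $L$) is an ind-algebra rather than an honest object of $\C$. In that case one appeals to the ind-algebra version of Corollary \ref{assocal} recorded in the remark following it, and observes that the quasi-semisimplicity argument of Lemma \ref{subquot}(ii) survives passage to inductive limits, since the symmetric algebra functor and the formation of ${\rm gr}\,U(-)$ both commute with filtered colimits. I do not expect a genuine obstacle here: the entire substance of the statement is already contained in Corollary \ref{assocal} and Lemma \ref{subquot}(ii), and the present proposition is simply their combination, with the quasi-semisimplicity hypothesis doing exactly the job of guaranteeing that the inclusion $SL\hookrightarrow SA$ remains injective.
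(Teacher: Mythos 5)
Your proof is correct and is exactly the paper's argument: apply Corollary \ref{assocal} to see that $A$ is PBW, then conclude via Lemma \ref{subquot}(ii) using quasi-semisimplicity. The extra remarks on the ind-algebra case are consistent with the remark following Corollary \ref{assocal} and do not change the substance.
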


\begin{proof} By Corollary \ref{assocal}, $A$ is PBW, so the result follows from Lemma \ref{subquot}(ii). 
\end{proof}

\begin{corollary}\label{freel} Let $\C$ be quasi-semisimple and $V\in \C$. Then the free Lie algebra ${\rm FLie}(V)$
is PBW. In particular, Corollary \ref{hilser} holds for $V$. 
\end{corollary}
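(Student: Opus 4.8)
The plan is to derive both assertions directly from results already established, with essentially no new computation.

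The key observation is that ${\rm FLie}(V)$ is a Lie subalgebra of an associative algebra, so that Proposition \ref{subal} applies on the nose. Indeed, by construction ${\rm FLie}(V)={\rm FOLie}(V)/E(V)$ with $E(V)=\Ker\phi^V$ for $\phi^V: {\rm FOLie}(V)\to TV$; hence the induced map $\bar\phi^V: {\rm FLie}(V)\to TV$ is injective, and (as already noted in the proof of Corollary \ref{extendsuni}(i)) its image is a subobject of $TV$ closed under the commutator bracket. Thus ${\rm FLie}(V)$ is a Lie subalgebra of the associative algebra $A=TV$. Since $\C$ is quasi-semisimple, Proposition \ref{subal} then yields at once that ${\rm FLie}(V)$ is PBW.

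For the final clause I would simply rerun the argument of Corollary \ref{hilser}, which is now unconditionally valid. The PBW property furnishes a graded isomorphism $S{\rm FLie}(V)\cong {\rm gr}\,U({\rm FLie}(V))$, and the canonical identification $U({\rm FLie}(V))\cong TV$ (recorded after Corollary \ref{extendsuni}) gives $S{\rm FLie}(V)\cong {\rm gr}(TV)=TV$ as graded objects. Passing to Hilbert series in ${\rm Gr}(\C)[[t]]$ and inverting both sides, exactly as in the proof of Corollary \ref{hilser}, produces the stated identity $\prod_{n\ge 1}(1-t^n)^{{\rm FLie}_n(V)}=1-tV$.

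I do not expect a genuine obstacle here: all of the substance lives in Proposition \ref{subal} (and ultimately in the PBW theorem for differential operators, Proposition \ref{PBWdiff}) together with the structural description of ${\rm FLie}(V)$ as a sub-Lie-algebra of $TV$. The only points deserving a moment's care are the verification that the image of ${\rm FLie}(V)$ in $TV$ is closed under bracket—supplied by Corollary \ref{extendsuni}(i)—and the observation that quasi-semisimplicity is precisely what guarantees the injectivity of the map $S(j)$ on symmetric algebras used inside Proposition \ref{subal}.
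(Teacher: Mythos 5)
Your proof is correct and coincides with the paper's own argument: the paper likewise deduces the statement from Proposition \ref{subal} via the inclusion ${\rm FLie}(V)\subset TV$, and your extra details (injectivity of $\bar\phi^V$, closure under bracket from Corollary \ref{extendsuni}(i), rerunning Corollary \ref{hilser}) simply make the one-line proof explicit.
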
 

\begin{proof} This follows from Proposition \ref{subal}, since ${\rm FLie}(V)\subset TV$. 
\end{proof}   

\begin{theorem}\label{PBWquas} If $\C$ is quasi-semisimple then any Lie algebra $L$ in $\C$ is PBW.  
\end{theorem}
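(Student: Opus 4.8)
The plan is to realize $L$ as a quotient of a free Lie algebra by an ideal, and then combine the two facts already at our disposal: free Lie algebras are PBW in a quasi-semisimple category (Corollary \ref{freel}), and the PBW property descends to quotients by ideals (Lemma \ref{subquot}(i)).

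First I would take the underlying object $V=L$ and apply Corollary \ref{extendsuni}(ii) to the identity morphism ${\rm id}_L: L\to L$. Since $L$ is a Lie algebra, this extends uniquely to a Lie algebra morphism $\pi: {\rm FLie}(L)\to L$. Because ${\rm FLie}(L)$ is generated in degree $1$ (being a quotient of ${\rm FOLie}(L)$, which is so generated), and $\pi$ restricts to ${\rm id}_L$ on the degree-$1$ component ${\rm FLie}_1(L)=L$, the image of $\pi$ is a sub-Lie-algebra of $L$ containing all of the underlying object $L$; hence $\pi$ is surjective.

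Next I would set $I:={\rm Ker}\,\pi$. As the kernel of a Lie algebra homomorphism, $I$ is an ideal of ${\rm FLie}(L)$, and surjectivity of $\pi$ yields an isomorphism $L\cong {\rm FLie}(L)/I$ of operadic Lie algebras. Now Corollary \ref{freel} shows that ${\rm FLie}(L)$ is PBW; this is precisely where quasi-semisimplicity is used, via the inclusion ${\rm FLie}(L)\subset TL$ and Proposition \ref{subal}. Applying Lemma \ref{subquot}(i) to the ideal $I\subset {\rm FLie}(L)$ then gives that $L\cong {\rm FLie}(L)/I$ is PBW, which is the assertion.

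The step requiring the most care is verifying that $I={\rm Ker}\,\pi$ is genuinely an ideal in the categorical sense and that $L\cong {\rm FLie}(L)/I$ as operadic Lie algebras: in the abelian category $\C$ one checks that the bracket ${\rm FLie}(L)\otimes I\to {\rm FLie}(L)$ factors through $I$, and that the bracket induced on the cokernel matches that of $L$ under $\pi$. These are routine once one notes that $\pi$ is a morphism of Lie algebras and that kernels and cokernels behave as expected in $\C$; all the genuine content is then supplied by Corollary \ref{freel} and Lemma \ref{subquot}(i). If $L$ is an ind-object rather than an object of $\C$, the same argument applies using the ind-version of Corollary \ref{freel}.
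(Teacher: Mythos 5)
Your proof is correct and is essentially identical to the paper's: both realize $L$ as a quotient of ${\rm FLie}(L)$ via the universal property and then combine Corollary \ref{freel} with Lemma \ref{subquot}(i). The extra verifications you flag (surjectivity of $\pi$, that ${\rm Ker}\,\pi$ is an ideal) are routine and implicitly absorbed in the paper's one-line argument.
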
 

\begin{proof} By definition, $L$ is a quotient of ${\rm FLie}(L)$, so the statement follows from Corollary \ref{freel} and Lemma \ref{subquot}(i). 
\end{proof} 

\subsection{Free restricted Lie algebras} 

Let $V\in \C$. The following definition is motivated by \cite{Fr}, Theorem 0.1. 

\begin{definition} The free restricted Lie algebra attached to $V$ is 
${\rm FRLie}(V)=\oplus_{n\ge 1}{\rm FRLie}_n(V)$, where 
$$
{\rm FRLie}_n(V)=(V^{\otimes n}\otimes {\bf Lie}_n)^{S_n}.
$$
\end{definition} 

This definition differs from the definition of ${\rm FOLie}(V)$ only in one aspect --- instead of $S_n$-coinvariants we take 
$S_n$-invariants. In particular, similarly to ${\rm FOLie}(V)$, ${\rm FRLie}(V)$ carries a natural grading-preserving bracket. 
Moreover, we have a natural Lie homomorphism $\pi_V: {\rm FOLie}(V)\to {\rm FRLie}(V)$ (the natural 
map from $S_n$-coinvariants to $S_n$-invariants; see \cite{Fr}, 1.1.20). It is an isomorphism in characteristic zero, but 
in general, it is neither surjective not injective. 

Note that we have a natural Lie homomorphism 
$$
\varphi^V: {\rm FRLie}(V)\to TV,
$$ 
induced by the inclusion of operads ${\bf Lie}\to {\bf Assoc}$. 
Moreover, since the functor of $S_n$-invariants is left exact, the homomorphism $\varphi^V$ is injective, i.e., ${\rm FRLie}(V)$ is a Lie algebra. 
Also, it is clear that $\varphi^V\circ \pi_V=\phi^V$, which implies that ${\rm Ker}\pi_V=E(V)$, so $\pi_V$ factors through ${\rm FLie}(V)$, which is a Lie subalgebra in 
${\rm FRLie}(V)$. 

\begin{corollary} Let $\C$ be quasi-semisimple. If $L$ is a Lie quotient of ${\rm FRLie}(V)$ 
then $L$ is PBW (in particular, a Lie algebra).
\end{corollary}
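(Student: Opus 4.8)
The plan is to reduce this corollary to the two results already established in this section, with no new computation required. First I would record that ${\rm FRLie}(V)$ is itself PBW. As observed in the discussion preceding the statement, the Lie homomorphism $\varphi^V: {\rm FRLie}(V)\to TV$ is injective, because the functor of $S_n$-invariants is left exact; consequently ${\rm FRLie}(V)$ is realized as a Lie subalgebra of the associative algebra $TV$. Since $\C$ is quasi-semisimple, Proposition \ref{subal} applies verbatim to this inclusion and yields that ${\rm FRLie}(V)$ is PBW.

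Next I would use that a Lie quotient is precisely a quotient by an ideal. By hypothesis $L={\rm FRLie}(V)/I$ for some ideal $I\subset {\rm FRLie}(V)$. Applying Lemma \ref{subquot}(i) to the PBW operadic Lie algebra ${\rm FRLie}(V)$ and the ideal $I$, I conclude that $L$ is PBW. The parenthetical assertion then follows for free: a PBW operadic Lie algebra has $L\to U(L)$ injective, so $L$ is a Lie algebra by Proposition \ref{anyasso}.

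There is essentially no hard step here, as the corollary is a formal consequence of Proposition \ref{subal} together with Lemma \ref{subquot}(i). The only point meriting care is the identification of the hypothesis of Proposition \ref{subal}: one must note that the injectivity of $\varphi^V$ exhibits ${\rm FRLie}(V)$ as a genuine Lie subalgebra of $TV$, rather than merely as the image of a Lie map that might collapse information. The quasi-semisimplicity assumption enters exactly once, precisely through the invocation of Proposition \ref{subal}, and it is what guarantees that the PBW property survives the restriction from $TV$ to the subalgebra ${\rm FRLie}(V)$.
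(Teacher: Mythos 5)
Your argument is correct and coincides with the paper's own proof: both apply Proposition \ref{subal} to the inclusion ${\rm FRLie}(V)\subset TV$ (justified by the injectivity of $\varphi^V$) and then pass to the quotient via Lemma \ref{subquot}(i). The added remark on why PBW implies the Lie algebra property is consistent with the paper's earlier discussion.
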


\begin{proof} This follows from Proposition \ref{subal} and Lemma \ref{subquot}(i), since 
$$
{\rm FRLie}(V)\subset TV.
$$ 
\end{proof}

\end{document}